\documentclass[12pt,letter]{amsart}

\usepackage{amssymb}
\usepackage{graphicx}
\usepackage{array}
\usepackage{mathtools}
\usepackage{thmtools}

\DeclareMathAlphabet{\mathpzc}{OT1}{pzc}{m}{it}

\usepackage{graphicx}
\usepackage[small]{caption}
\usepackage{datetime,verbatim,array}
\usepackage{amsthm, amsmath, amsfonts, amssymb}
\usepackage{mathrsfs, rotating}
\usepackage[usenames]{color}
\usepackage[margin=1in]{geometry}
\usepackage[latin2]{inputenc}
\usepackage{t1enc}
\usepackage{epstopdf} 
\usepackage[colorlinks, citecolor=blue, filecolor=black, linkcolor=red, urlcolor=blue]{hyperref}

\usepackage{comment}
\usepackage[all,cmtip]{xy}

\usepackage{pinlabel}
\usepackage[dvipsnames]{xcolor}

\usepackage{booktabs}

\usepackage{subcaption}



\newcommand{\bZ}{\mathbb{Z}}

\newcommand{\bN}{\mathbb{N}}

\newcommand{\bR}{\mathbb{R}}

\newcommand{\cC}{\mathscr{C}}

\newcommand{\cG}{\mathscr{G}}

\newcommand\into{\hookrightarrow}

\newcommand{\red}{\color{red}}
\definecolor{softgreen}{HTML}{00D400}
\newcommand{\green}{\color{softgreen}}
\definecolor{maroon}{HTML}{800000}

\definecolor{salmon}{HTML}{FF8080}

\definecolor{softblue}{HTML}{0066FF}
\newcommand{\blue}{\color{softblue}}
\definecolor{darkblue}{HTML}{0044AA}

\definecolor{darkgreen}{HTML}{008000}
\newcommand{\dgreen}{\color{darkgreen}}

\newcommand{\p}{\boldsymbol{\pi}}

\newcommand{\e}{\boldsymbol{\varepsilon}}

\newcommand{\K}{K}

\newcommand{\bK}{\mathbf{K}}
\newcommand{\bX}{\mathbf{X}}

\newcommand{\D}{\Delta}

\newcommand{\B}{\mathcal{B}}
\newcommand{\LL}{\mathscr{L}}

\newcommand{\abs}[1]{\left|#1\right|}

\newcommand{\td}{%
\mathpzc{D}}


\numberwithin{equation}{section}

\usepackage{environ}
\usepackage{etoolbox}
\usepackage{xparse}
\NewEnviron{requation}[1]{%
  \csxdef{reqno@#1}{\theequation}
  \protected@csxdef{req@#1}{\BODY}
  \equation\BODY\label{#1}\endequation
}
\newcounter{rememberedequation}
\NewDocumentCommand\Eqref{ s m }{%
  \relax\ifmmode\csuse{req@#2}%
  \else%
    \IfBooleanTF{#1}{\begin{equation*}\csuse{req@#2}\end{equation*}}
    {
       \setcounter{rememberedequation}{\value{equation}}
       \setcounter{equation}{\csuse{reqno@#2}}
       \begin{equation}\csuse{req@#2}\end{equation}
       \setcounter{equation}{\value{rememberedequation}}
    }%
  \fi%
}


\usepackage[dvipsnames]{xcolor}
\usepackage{hyperref}
\definecolor{candyapplered}{rgb}{1.0, 0.03, 0.0}
\definecolor{mediumblue}{rgb}{0.0, 0.0, 0.8}
\hypersetup{
pdfauthor={William W. Menasco and Margaret Nichols},
pdftitle={Surface embeddings in $\bR^2 \times \bR$},
bookmarksnumbered,
colorlinks=true,
linkcolor=mediumblue,
citecolor=candyapplered,
urlcolor=blue}

\usepackage{caption,subcaption}
\usepackage[ocgcolorlinks]{ocgx2}

\declaretheorem[numberwithin=section]{theorem}
\declaretheorem[sibling=theorem, style=definition]{definition}
\declaretheorem[sibling=theorem]{lemma}
\declaretheorem[sibling=theorem, style=remark]{remark}
\declaretheorem[sibling=theorem]{proposition}

\declaretheorem[sibling=theorem]{problem}

\declaretheorem[sibling=theorem, style=remark]{example}

\begin{document}

\title[Surface embeddings in $\bR^2 \times \bR$]
{Surface embeddings in $\bR^2 \times \bR$}
\author[Menasco and Nichols]{William W. Menasco and Margaret Nichols}

\address{Department of Mathematics, University at Buffalo}
\email{menasco@buffalo.edu}

\address{Fields Institute for Research in Mathematical Sciences}
\email{mnichols@fields.utoronto.ca}

\keywords{surface embedding, branched surface, crease set, crossing balls}

\begin{abstract}
This is an investigation into a classification of embeddings of a surface in Euclidean $3$-space.  Specifically, we consider $\bR^3$ as having the product structure $\bR^2 \times \bR$ and let $\p:\bR^2 \times \bR \rightarrow \bR^2$ be the natural projection map onto the Euclidean plane.  Let $ \e : S_g \into \bR^2 \times \bR$ be a smooth embedding of a closed oriented genus $g$ surface such that the set of critical points for the map $\p \circ \e$ is a smooth (possibly multi-component) $1$-manifold, $\cC \subset S_g$.  We say $\cC$ is the {\em crease set of $\e$} and two embeddings are in the same {\em isotopy class} if there exists an isotopy between them that has $\cC$ being an invariant set.  The case where $\p \circ \e|_\cC$ restricts to an immersion is readily accessible, since the turning number function of a smooth curve in $\bR^2$ supplies us with a natural map of components of $\cC$ into $\bZ$.  The Gauss-Bonnet Theorem beautifully governs the behavior of $\p \circ \e (\cC)$, as it implies $\chi(S_g) = 2 \sum_{\gamma \in \cC} t(\p \circ \e (\gamma))$, where $t$ is the turning number function.  Focusing on when $S_g \cong S^2$, we give a necessary and sufficient condition for when a disjoint collection of curves $\cC \subset S^2$ can be realized as the crease set of an embedding $\e: S^2 \into \bR^2 \times \bR$.
From there, we give the classification of all isotopy classes of embeddings when $\cC \subset S^2$ and $|\cC|=3$---a simple yet enlightening case. 
As a teaser of future work, we give an application to knot projections and discuss directions for further investigation.
\end{abstract}

\maketitle

\section{Introduction}
\label{Section:Introduction}
\subsection{Initial discussion of main results.}
\label{Subsec: main results}

\begin{figure}[ht]

\labellist
\tiny

\pinlabel crease~set [tl] at 480 285
\pinlabel crease~set [t] at 830 395
\pinlabel $\p$ [l] at 623 257

\pinlabel $\bR^2$ [tl] at 480 85

\pinlabel $\bR^2$ [tl] at 970 85

\endlabellist

\centering
\includegraphics[width=.8\linewidth]{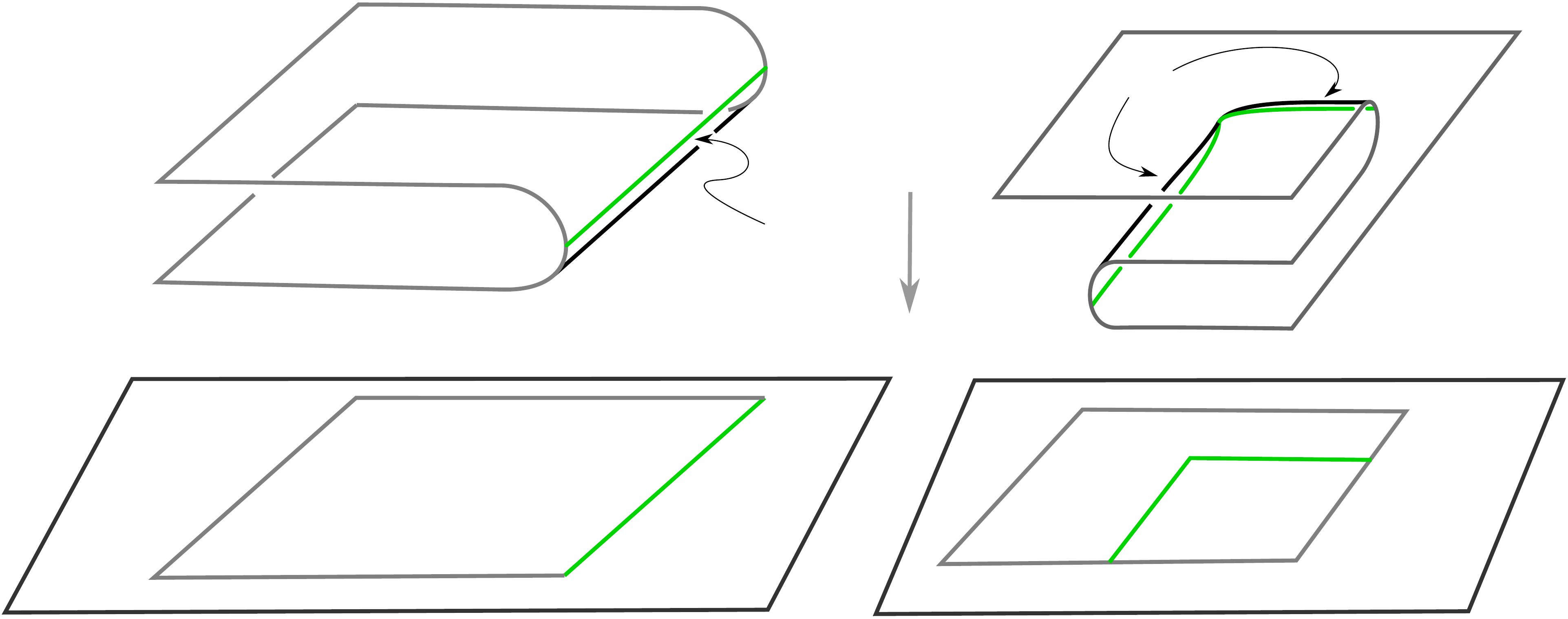}
\caption{The left illustration shows a generic neighborhood of a point in the crease set and its corresponding projection into $\bR^2$. The right illustration shows the neighborhood of a corner-point in the crease set and its corresponding projection into $\bR^2$. }
\label{Fig:creasing}
\end{figure}

We let $\e: S_g \into \bR^2 \times \bR (\cong \bR^3)$ be a smooth embedding of a closed surface of genus $g$.  Let $\p: \bR^2 \times \bR \rightarrow \bR^2$ be the natural projection coming from the product structure.  Let $\cC \subset S_g$ be the critical point set of $\p\circ \e$---the set of points, $x \in S_g$, where the differential, ${d(\p\circ \e)}_x : T_{x}(S_g)) \into T_{\p\circ \e (x)}(\bR^2)$ is not a surjective map.  We will refer to $\cC$ as the {\em crease set of $\e$}.  By general position arguments we can assume $\e(\cC) \subset \bR^2 \times \bR$ is a collection of smooth simple closed curves (s.c.c.'s) with a finite collection of marked {\em corners}---points where the immersion of the crease set into the plane, $(\p\circ \e)(\cC) \subset \bR^2$, fails to be smooth, with a local picture as in Fig.~\ref{Fig:creasing}. Moreover, we may assume that the only multi-point images of the projected crease set are transverse double points.  We refer to such well-behaved embeddings as {\em regular}.

\begin{figure}[ht]

\labellist
\small

\pinlabel bar~that~forces~dimple [l]	at	223.92	0																					

\endlabellist

\centering
\includegraphics[width=.4\linewidth]{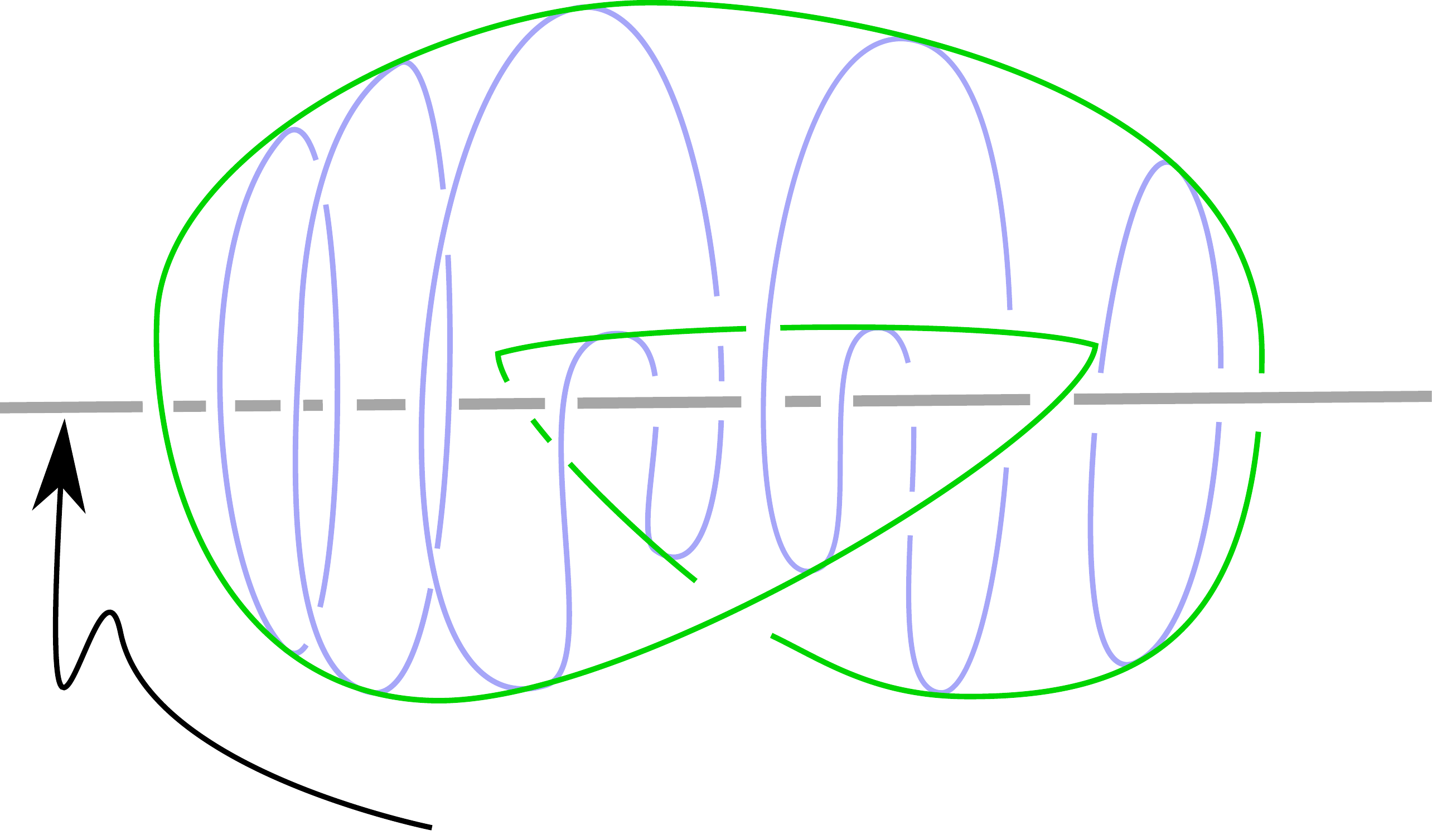}
\caption{The crease curve, shown in {\green green}, projects to a piecewise smooth curve.}
\label{Fig:Cheshire}
\end{figure}

The simplest, yet still interesting, example might be an embedding of the $2$-sphere, $S^2(=S_0)$, that has its crease set a single curve with two corners.  (See Fig.~\ref{Fig:Cheshire}.)  To metaphorically describe how this embedding might be obtained, one starts with the standard unit sphere in $\bR^3$ centered at the origin.  The reader could image forming a dimple in the ``clay ball'' the sphere bounds by taking a tilted rigid bar---say having a core line of \{$y+z = 1, \ x=1.5\}$---and pushing it into the ball via parallel translate from $x=1.5$ to $x=0$. Clay being clay, the bar deforms the round ball so that a dimple is created in the sphere.  The crease curve---the equator of $S^2$ before this deformation---appears deformed by a type-I Reidemeister isotopy move.  The new crease set projects to $\bR^2$ with a single double-point crossing.

This note is an initial investigation into the classification of {\em isotopy classes of regular surface embeddings into $\bR^2 \times \bR$}.   Specifically, we will consider two regular embeddings, $\e_0, \e_1: S_g \into \bR^2 \times \bR$, to be in the same isotopy class if there is a smooth isotopy $\e_t: S_g\to \bR^2 \times \bR, \ 0 \leq t \leq 1$, such that the marked structure of $\cC \subset S_g$, as the critical set of $\e_t$, is invariant for $t \in [0,1]$---we say such an isotopy is {\em regular}.  The number of corners of a crease component is unchanged by such an isotopy, and in particular, the equatorial crease set of the standard unit sphere and the two-corner crease set of the dimpled sphere are in different isotopy classes.

For a given $S_g$ and $\e$, and for a connected component $\gamma \subset \cC$, let $N(\gamma)$ be a closed annular neighborhood of $\gamma$ that is sufficiently small so that $N \cap \cC = \gamma$.  Let $\{ \hat{\gamma}, \hat{\gamma}^\prime \}= \partial N$.  In \S\ref{subsection: corners} we give a scheme for assigning an orientation to each component of $\partial N$.  For such oriented curves we can then define a ``turning number'' for their projections to $\bR^2$ of their embeddings into $\bR^2 \times \bR$.  We will then have a naturally associated $2$-tuple, $(t(\hat{\gamma}), t(\hat{\gamma}^\prime)) \in \bZ^2$---the {\em turning numbers} associated with $\gamma$.    (Please see \S\ref{section: preliminaries} for precise definitions.)  In the sphere-with-dimple example, this $2$-tuple is $(1,1)$ for the single component of $\cC$.

When we restrict a regular isotopy to a smooth oriented curve in $S_g$, its turning number will be invariant.  As such, we will see that our $2$-tuple is invariant within the isotopy class of an embedding.  The starting point for our investigation will be the establishment of relationships between our turning number $2$-tuple and the Euler characteristic of the surface, $\chi(S_g)$.  Specifically,

\begin{equation}\label{Eq: Gauss-Bonnet 1}
\chi(S_g) = \sum_{\gamma \in \cC}  t(\hat{\gamma}) + t(\hat{\gamma}^\prime). 
\end{equation}
This equality is  restated in Theorem~\ref{Theorem: eulerturning}.  It is properly seen as a Gauss-Bonnet type equation that governs the behavior of embeddings of $S_g$ into $\bR^2 \times \bR$.
Thus, we say that a $2$-tuple weight assignment to components of $\cC$ which satisfies the equations of Theorem~\ref{Theorem: eulerturning} corresponds to a {\em Gauss-Bonnet weighting of $\cC$}.

When we restrict to embedding classes of $S^2$ into $\bR^2 \times \bR$, Equ.~\ref{Eq: Gauss-Bonnet 1} becomes $\chi(S^2) = 2$.  We then have the following natural questions.
\begin{itemize}
    \item[1.] Given an collection of disjoint s.c.c.'s with corners, $\cC \subset S^2$, when is there a Gauss-Bonnet weighting of $\cC$?
    \item[2.] Given a collection of s.c.c.'s with corners, $\cC \subset S^2$, that has a Gauss-Bonnet weighting, is there an embedding, $\e: S^2 \into \bR^2 \times \bR$, that realizes $\cC$ as a crease set of $\e$?
\end{itemize}

Focusing on question 1, it is easy to produce collections that do not have any Gauss-Bonnet weightings---three non-concentric circles in $S^2$ for example.  However, when such a weighting exists it is unique.

\begin{theorem}
\label{Theorem: weighting uniqueness}
A Gauss-Bonnet weighting on a collection of disjoint s.c.c.'s with corners in $S^2$ is uniquely determined.
\end{theorem}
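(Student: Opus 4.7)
The plan is to induct on $n = |\cC|$, leveraging the tree structure of $S^2 \setminus \cC$. Since $\cC$ consists of $n$ pairwise disjoint simple closed curves on $S^2$, its complement has exactly $n+1$ connected planar regions, and the dual graph $T$ (regions as vertices, curves as edges) is a tree. I interpret the equations of Theorem~\ref{Theorem: eulerturning} as providing, for each region $R$, a local Gauss--Bonnet equation that sums the turning numbers of the boundary components $\hat{\gamma}$ or $\hat{\gamma}^\prime$ of $N(\gamma)$ lying on the $R$-side (with the orientations from \S\ref{subsection: corners}) and equates it to $\chi(R)$, together with a per-curve constraint that relates $t(\hat{\gamma})$ and $t(\hat{\gamma}^\prime)$ through the corners of $\gamma$. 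These form a linear system on the $2n$ unknown turning numbers; summing the regional equations recovers \eqref{Eq: Gauss-Bonnet 1}.

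For the base case $n = 1$, both complementary regions are disks, and the two regional equations immediately pin the turning numbers to $1$. For the inductive step I select a leaf of $T$: an innermost region $R$ that is a disk bounded by a single curve $\gamma$ with no other curves of $\cC$ inside. The regional equation on $R$ forces the turning number on the $R$-facing side of $\gamma$, and the per-curve equation from Theorem~\ref{Theorem: eulerturning} forces the turning number on the opposite side. Passing to $\cC^\prime = \cC \setminus \{\gamma\}$, the region $R$ and its neighbor $R^\prime$ across $\gamma$ merge into a single region of $S^2 \setminus \cC^\prime$. I verify that the regional equation for the merged region is exactly the sum of the equations on $R$ and $R^\prime$ after cancelling the two $\gamma$-contributions via the per-curve equation, so any Gauss--Bonnet weighting of $\cC$ restricts to a Gauss--Bonnet weighting of $\cC^\prime$. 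By the inductive hypothesis the weighting on $\cC^\prime$ is unique, and combined with the two already-determined values for $\gamma$, so is the weighting on $\cC$.

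The main obstacle is the merge-compatibility check: showing that the two $\gamma$-contributions in the regional equations for $R$ and $R^\prime$ cancel precisely under the orientation conventions of \S\ref{subsection: corners}, given the per-curve relation between $t(\hat{\gamma})$ and $t(\hat{\gamma}^\prime)$. This reduces to sign-tracking using the fact that $\hat{\gamma}$ and $\hat{\gamma}^\prime$ lie on opposite sides of the annulus $N(\gamma)$ and the additivity $\chi(R \cup \gamma \cup R^\prime) = \chi(R) + \chi(R^\prime)$ under the merge. Once this is established, the induction runs through without further complication, and a parallel linear-algebra formulation (writing the system as $Aw=b$ and exhibiting rank $2n$ via the tree structure) provides an alternative route to the same conclusion.
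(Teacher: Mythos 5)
Your high-level strategy---exploit the tree structure of the dual graph, seed the recursion at disk regions, and use the per-curve relation together with the regional equations to propagate---is the same idea the paper uses. But the specific inductive framing, deleting a leaf curve and passing to $\cC' = \cC \setminus \{\gamma\}$, does not work, and the ``merge-compatibility check'' you flag as the main obstacle is not merely a sign-tracking nuisance: it is false.

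Here is the problem concretely. Let $\bar R$ be the disk bounded by $\gamma$, $\bar R'$ its neighbor across $\gamma$, and $\bar R'' = \bar R \cup \gamma \cup \bar R'$ the merged region in $S^2 \setminus \cC'$. Euler characteristic is additive under gluing along circles, so $\chi(\bar R'') = \chi(\bar R) + \chi(\bar R') = 1 + \chi(\bar R')$. The regional equation for $R'$ in the $\cC$-system reads $\sum_{\delta \neq \gamma} t_{R'}(\delta) = \chi(\bar R') - t_{R'}(\gamma)$, while the regional equation for $R''$ in the would-be $\cC'$-system requires $\sum_{\delta \neq \gamma} t_{R''}(\delta) = \chi(\bar R'')$. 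Since the push-off weights at the surviving boundary curves $\delta$ agree for $R'$ and $R''$, these two equations are compatible only if $t_{R'}(\gamma) = -1$. That forces $t_R(\gamma) + t_{R'}(\gamma) = 0$, which is exactly the cancellation you assert. But the per-curve relation gives no such thing: for a cornerless $\gamma$ one has $t_R(\gamma) = t_{R'}(\gamma) = \chi(\bar R) = +1$, so the $\gamma$-contributions sum to $2$, not $0$. The toy example makes this vivid: three cornerless concentric circles on $S^2$ admit the weighting $(+1,-1,+1)$, yet deleting the innermost circle leaves two cornerless concentric circles, which admit \emph{no} Gauss--Bonnet weighting at all ($\chi$ of each disk forces both weights to $+1$, violating the annulus equation $t(\gamma_m) + t(\gamma_o) = 0$). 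So ``any Gauss--Bonnet weighting of $\cC$ restricts to one of $\cC'$'' is not just unproven, it is wrong, and your inductive hypothesis cannot be invoked.

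The fix is to not change $\cC$ at all. The paper's argument keeps the crease set fixed and instead runs a propagation over the \emph{same} dual tree: (i) each leaf region is a disk $\bar\K$, so $t_\K(\gamma) = \chi(\bar\K) = 1$; (ii) the corner-flip relation determines $t_{\K'}(\gamma)$ from $t_\K(\gamma)$; (iii) whenever a region has all but one boundary weight determined, its regional equation determines the last. The tree structure guarantees that if anything is still undetermined, some region has exactly one undetermined boundary weight, so the loop terminates with every weight pinned. This is morally your linear-algebra remark---the system has full rank because of the tree---but realized without ever passing to a smaller crease set. If you want to keep a formal induction, induct on the number of undetermined weights rather than on $|\cC|$.
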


Let $(S^2 , \cC)$ and $(S^2, \cC^\prime)$ be two pairs of $2$-sphere/collection of disjoint smooth marked s.c.c.'s.  Assume each pair has a Gauss-Bonnet weighting.  We say the two pairs are {\em equivalent crease set configurations} if there exists a diffeomorphism of pairs, $h : (S^2 , \cC) \into (S^2, \cC^\prime)$.  With this equivalence in mind, Theorem~\ref{Theorem: weighting uniqueness} can be leveraged to prove the following finiteness result.  For its statement, we denote the number of corners of curve $\gamma \in \cC$ as $c(\gamma) (\in \bN)$.

\begin{theorem}
\label{Theorem: configuration finiteness}
Let $n_c, n_t \subset \bN$ be two arbitrary integers.  Then there are only finitely many possible crease set configurations in $S^2$ such that $\abs{\cC} \leq n_t$ and
$\sum_{\gamma \in \cC} c(\gamma) \leq n_c$.
\end{theorem}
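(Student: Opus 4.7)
The plan is to reduce Theorem~\ref{Theorem: configuration finiteness} to a finite combinatorial enumeration and then invoke Theorem~\ref{Theorem: weighting uniqueness} to confirm that the Gauss-Bonnet weighting contributes no additional moduli to distinguish configurations. The core object I attach to a pair $(S^2,\cC)$ is its nesting tree $T(\cC)$: the vertices are the connected components of $S^2 \setminus \cC$ and the edges are the curves $\gamma \in \cC$, with each edge joining the two regions it separates. Because every s.c.c.\ in $S^2$ is separating, $T(\cC)$ really is a tree; under the bound $|\cC| \leq n_t$ it has at most $n_t$ edges, so only finitely many isomorphism classes of such trees arise.

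I will then decorate each edge $\gamma$ of $T(\cC)$ with its corner count $c(\gamma) \in \bN$. The key rigidity claim is that, up to diffeomorphism of pairs, the configuration $(S^2,\cC)$ is determined by the unlabeled decorated tree. Each region at a tree vertex of valence $d$ is a planar surface with $d$ boundary components, unique up to diffeomorphism. The mapping class group of such a planar surface acts transitively on orderings of its boundary components, so any gluing pattern matching the tree produces the same pair up to diffeomorphism. On each curve $\gamma$ with $c(\gamma)$ marked corners, any two such corner-point configurations are carried to one another by a self-diffeomorphism of $\gamma$, which extends across an annular neighborhood and hence to a self-diffeomorphism of $(S^2, \cC)$.

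Granted this rigidity, finiteness is immediate: the number of trees on at most $n_t$ edges is finite, and for each tree the number of ways to distribute at most $n_c$ corner markings among its edges is also finite. Finally, although a crease set configuration carries its Gauss-Bonnet weighting as implicit data, Theorem~\ref{Theorem: weighting uniqueness} says this weighting is uniquely determined by $(S^2,\cC)$, so it does not enlarge the count.

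The main obstacle I expect is the rigidity step: one must verify carefully that self-diffeomorphisms of adjacent regions can be chosen compatibly along shared boundary circles, in the presence of the corner markings, so as to assemble into a single diffeomorphism of $(S^2, \cC)$. This is classical in spirit but must be spelled out, most naturally by building the diffeomorphism region-by-region following the tree and invoking transitivity of the relevant mapping class group at each gluing step.
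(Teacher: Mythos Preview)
Your proposal is correct and follows essentially the same approach as the paper: both attach the nesting tree to $(S^2,\cC)$, observe that trees with at most $n_t$ edges are finite in number, and then count the finitely many ways to distribute at most $n_c$ corner markings among the edges. You are more explicit than the paper about the rigidity step (that the decorated tree determines the pair up to diffeomorphism) and about invoking Theorem~\ref{Theorem: weighting uniqueness} to rule out extra moduli from the weighting, but these elaborations fill in details the paper leaves as ``topological bookkeeping'' rather than constituting a different argument.
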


For our initial attempt at answering question 2, we will simplify to the special case where $\cC \subset S^2$ is a collection of disjoint s.c.c.'s without any corners.  In this case, for a component, $\gamma \in \cC$, we will have $t(\hat{\gamma}) = t( \hat{\gamma}^\prime)$.

In this simplified case we have the following main result.

\begin{theorem}
\label{Theorem: embedding existence}
For any collection of disjoint smooth s.c.c.'s $\cC \subset S^2$ admitting a Gauss-Bonnet weighting, there exists a regular embedding, $\e: S^2 \into \bR^2 \times \bR$, which realizes $\cC$ as the crease set with the corresponding Gauss-Bonnet weighting.  That is, for each $\gamma \in \cC$, $t(\gamma)$ is equal to its weight.
\end{theorem}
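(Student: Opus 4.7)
The plan is to construct $\e$ in two stages: first design the map $\p \circ \e : S^2 \to \bR^2$ region by region on $S^2 \setminus \cC$ as a coherent system of immersions, then lift to $\bR^2 \times \bR$ with standard fold singularities along $\cC$. Since $\cC$ is a disjoint family of smooth s.c.c.'s in $S^2$, the complement $S^2 \setminus \cC$ decomposes as open planar regions $R_1, \ldots, R_n$ whose adjacency graph is a tree (each $\gamma \in \cC$ separates $S^2$ by the Jordan--Sch\"onflies theorem). Combining Equation~\eqref{Eq: Gauss-Bonnet 1} with the planar Gauss--Bonnet formula applied to each region, the given weighting on $\cC$ translates into an assignment of boundary turning numbers $\{n_{i,j}\}$ to each $R_i$ satisfying the local constraint $\sum_j n_{i,j} = \chi(R_i)$.

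Next I would build compatible smooth immersions $f_i : R_i \to \bR^2$ realizing the prescribed turning numbers, with the matching condition $f_i(\gamma) = f_{i'}(\gamma)$ whenever $\gamma$ is shared by $R_i$ and $R_{i'}$. I would proceed by induction on the tree of regions: fix an immersion of a chosen root $R_{i_0}$ realizing its boundary data (available by classical immersion theory for planar surfaces, since $\sum_j n_{i_0,j}=\chi(R_{i_0})$), then at each step extend across a crease into an adjacent region, forcing one boundary curve to coincide with the already-built parent and freely choosing the remaining boundary curves to realize their prescribed turning numbers.

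Once the $\{f_i\}$ are in place, I would assemble $\e$ by lifting each $f_i$ to a graph in $\bR^2 \times \bR$ at a height $h_i$ assigned according to depth in the tree of regions, gluing adjacent lifts near each crease $\gamma$ via a local fold model such as $(u,v) \mapsto (u, v^2, v)$ in coordinates where $\gamma = \{v = 0\}$. Choosing heights carefully---alternating above and below at each fold, with a generic perturbation to separate sheets over any transverse self-intersections of the projection---yields a smooth embedded surface whose critical set for $\p$ is exactly $\cC$. Under the orientation conventions of \S\ref{subsection: corners}, a neighborhood of each $\gamma$ then computes the prescribed turning number as both $t(\hat\gamma)$ and $t(\hat\gamma')$.

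The main obstacle is the relative extension step in the immersion construction. Realizing prescribed boundary turning numbers on a single planar surface is standard, but producing an immersion whose one boundary curve is \emph{forced} to coincide with a specific immersed curve inherited from the parent region is subtler: in principle Blank-type obstructions could forbid filling a given boundary curve with an interior immersion. I would expect to overcome this by exploiting the freedom in choosing the immersions of the not-yet-fixed boundary circles of each child region together with the tree structure, so that the combinatorial pattern of the inherited boundary is always fillable. Verifying this compatibility, together with the orientation bookkeeping that ensures the assigned weights appear correctly as $t(\hat\gamma)$ and $t(\hat\gamma')$ in the final embedding, is the technical heart of the argument.
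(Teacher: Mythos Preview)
Your overall architecture---build the projection $\p\circ\e$ region by region as a tree of immersions, then lift---is different from the paper's, which works in $\bR^2\times\bR$ from the outset. The paper places a \emph{standard} embedded disk-with-holes for each region $\bar K$ in its own horizontal plane $\bR^2\times\{z_K\}$, stacks these according to the tree, glues shared boundary circles vertically, and only then performs an explicit three-dimensional ``twisting operation'' simultaneously on an accordion of stacked sheets along chosen arcs to convert the standard turning numbers $(1,-1,\ldots,-1)$ into the prescribed ones. Because every intermediate object is already embedded in $\bR^3$, the paper never has to confront either of your two hard steps.

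The serious gap in your proposal is the lifting step. The phrase ``lift each $f_i$ to a graph in $\bR^2\times\bR$ at a height $h_i$'' does not parse: as soon as a region carries turning number other than $\pm 1$, your immersion $f_i$ is not injective, so its image in $\bR^2\times\{h_i\}$ is not embedded and is certainly not a graph over $\bR^2$. What you actually need is a height function $g_i:R_i\to\bR$ so that $(f_i,g_i):R_i\to\bR^2\times\bR$ is an embedding, and these $g_i$ must match up across each crease according to the fold model $(u,v)\mapsto(u,v^2,v)$. A ``generic perturbation to separate sheets'' does not produce such a $g_i$: the sheets you need to separate over a double point of $f_i$ lie in the \emph{same} connected region $R_i$, so you cannot move them independently; you must build a global height function on $R_i$, and its boundary behavior along $\partial R_i$ is already pinned down by the fold gluing to the neighboring region. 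Ensuring all of this is compatible over the whole tree is exactly the content that the paper's twisting construction supplies and that your outline omits.

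Your acknowledged obstacle---the relative extension of an immersion when one boundary curve is a prescribed immersed curve---is real but probably surmountable via the $h$-principle (regular homotopy of the prescribed boundary to a model, extended inward). The paper sidesteps it entirely: since every region starts life as a round standard disk-with-holes and the prescribed turning numbers are achieved by a single global twist move on the assembled stack, no Blank-type filling problem ever arises. If you want to salvage your approach, the work is not in the immersion theory but in writing down the height functions and proving they can be made globally coherent; as written, that step is an assertion, not an argument.
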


We will establish Theorem~\ref{Theorem: embedding existence} by construction.  Our constructive argument will yield regular embeddings that are in an aesthetically nice form---the curvature function on each component of $\p \circ \e(\cC)$ will never be zero.

Surprisingly (at least to the authors), this aesthetic feature is not always achievable.  In particular, the relatively simple situation when $\cC \subset S^2$ is just three concentric circles without corners there does exist an 
``non-intuitive'' embedding where the curvature function on $\p \circ \e(\cC)$ must have points of zero curvature.  Regardless, it is still possible to perform a calculation that gives a complete classification of the isotopy classes for when $|\cC|=3$.  We view this novel calculation as prescient of what is possible for when $|\cC|$ and genus are higher.

\begin{theorem}
\label{theorem: |cC| = 3}
Up to reflection, there are exactly three isotopy classes of regular $S^2$ embeddings into $\bR^2 \times \bR$ when $\cC$ is just three curves without corners.
\end{theorem}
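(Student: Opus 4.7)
The plan is to identify the only topological configuration that admits a weighting, pin down that weighting, produce three explicit inequivalent embeddings, and then rule out any further ones. Three disjoint smooth s.c.c.'s in $S^2$ cut the sphere into four regions whose dual graph is a tree on four vertices---either a path (three linearly nested curves $C_1\supset C_2\supset C_3$) or a star (three mutually non-nested curves whose common complement is a pair of pants). The ``three non-concentric circles'' remark following question~(2) in \S\ref{Subsec: main results} already rules out the star configuration; I would reprove this by running the turning-number contributions of the pair-of-pants region against \eqref{Eq: Gauss-Bonnet 1}. So $\cC$ must be three linearly nested curves. With no corners, $t(\hat\gamma)=t(\hat\gamma^\prime)=:t_i$ for each $C_i$, and \eqref{Eq: Gauss-Bonnet 1} collapses to $t_1+t_2+t_3=1$. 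The outermost region (outside $C_1$) and innermost region (inside $C_3$) are disks that immerse into $\bR^2$ via $\p\circ\e$, so Whitney's theorem together with the orientation conventions of \S\ref{subsection: corners} forces $t_1=t_3=1$, and Theorem~\ref{Theorem: weighting uniqueness} then pins down $(t_1,t_2,t_3)=(1,-1,1)$ as the unique Gauss-Bonnet weighting.

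Theorem~\ref{Theorem: embedding existence} produces one embedding realizing this data, with everywhere-nonzero curvature on the projected creases. I would then construct two further explicit embeddings and distinguish the three resulting classes by the combinatorial invariant consisting of the planar shadow $\p\circ\e(\cC)\subset\bR^2$ together with the sheet-count function (the cardinality of the fiber of $\p\circ\e$) on each component of its complement. Each embedding gives a different such combinatorial picture, and the claim is that---up to the reflection of $\bR^2\times\bR$ across a horizontal plane---there are exactly three ways to assemble three immersed planar curves with turning numbers $(1,-1,1)$ that (a) respect the nesting $C_1\supset C_2\supset C_3$ in $S^2$ and (b) admit a consistent vertical height function on the sphere sitting above them. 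The third of these is precisely the ``non-intuitive'' embedding flagged just before the statement of the theorem, whose middle projected crease must contain points of zero curvature.

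Non-isotopy of the three models follows because the planar diagram together with the sheet-count function is invariant under regular isotopy and takes three different values on the three examples. Completeness amounts to enumerating every planar picture compatible with (a)--(b) and recognizing each, up to the horizontal reflection, as one of the three models. I expect the main obstacle to be verifying the third, ``non-intuitive'' class: one has to show that a certain combinatorial pattern actually forces inflection points on the projected middle crease, so that no locally-convex representative can exist. I would approach this via a Gauss-Bonnet/curvature-integration argument on the annular region of $S^2$ between $C_1$ and $C_2$: if all three projected creases were strictly convex in that combinatorial model, the total signed geodesic curvature around the planar image of this annulus would fail to agree with the value demanded by the weighting $(1,-1,1)$, giving a contradiction. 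Making this rigidity statement precise---rather than the three constructions themselves---is where I expect most of the work to lie.
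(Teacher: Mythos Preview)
Your setup through the identification of the nested configuration and the weighting $(1,-1,1)$ is fine and matches the paper. The gap is in everything after that.

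The central problem is your proposed invariant. The planar shadow $\p\circ\e(\cC)$ together with the sheet-count function is \emph{not} invariant under regular isotopy. A regular isotopy fixes $\cC\subset S^2$ but moves the embedding freely in $\bR^2\times\bR$; the projected crease curves can pass through tangencies, creating or destroying double points (this is exactly what the paper's pinching/splitting moves on the associated branched surface record). So your ``non-isotopy'' argument collapses, and worse, your ``completeness'' step---enumerating all planar pictures compatible with (a)--(b)---is an enumeration of an infinite set modulo a nontrivial equivalence you have not identified.

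The invariant the paper actually uses is the \emph{folding sign} of each crease curve (whether the surface folds towards or away from the bounded side), which you never mention. After symmetry this leaves four sign assignments on $(\gamma_i,\gamma_m,\gamma_o)$; one is eliminated by a $\pi_2$ argument on the quotient branched surface $\B=M_S/\{I\text{-fibers}\}$, and the remaining three are the saucer, mushroom, and toric models. The substantial content---that each surviving sign assignment contains a \emph{unique} regular isotopy class---is proved by a complexity-reduction argument on $\B$: one counts double points of the branching locus $\LL$ (weighted by whether they are ``$\partial$-close''), and shows that two pinching operations strictly reduce this complexity until one reaches the model. None of this machinery is present in your outline, and the curvature/inflection discussion you propose for the third class is orthogonal to what actually distinguishes it (it is the unique class with $\gamma_m$ positive and $\gamma_i,\gamma_o$ negative, forcing $\LL$ to have a double point that cannot be removed).
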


As a final remark to the section, the authors have conducted a wide literature search and have found no previous investigation into the critical set of $\p \circ \e$---the crease set---with the exception of the recent contribution of Joel Hass \cite{[H]}. Since the Gauss-Bonnet theorem is a classical premier result of differential topology that has produced a significant body of applications and generalizations, this lack of previous interest in the crease set is puzzling.  Regardless, as the arguments in this paper will illustrate the crease set is a source of significant control over the behavior and positioning of arbitrary surface embeddings in $\bR^3$.

\subsection{A link projection application}
\label{subsection: knot application}


Our original motivation for studying surface embeddings into $\bR^2 \times \bR$ comes from knot theory.  Early work of the first author developed a ``normal form'' for representing essential surfaces in $S^3$ link complements with respect to a link projection \cite{[M1]}.

The key construction for this normal form involves positioning a link $L$ to lie in $\bR^2$ (coming from its projection) except at crossings, where the two crossing strands lie on the boundary of a small ``crossing ball''. A surface $S\subset \bR^3\setminus L$ in normal form can be reconstructed entirely from its intersections with $\bR^2$ and these crossing balls. To set notation, let $\mathbf{R}^2_+$ (respectively, $\mathbf{R}^2_-$) be $\bR^2$ with each disk neighborhood of a crossing replaced by the upper (respectively, lower) hemisphere of a crossing ball. The surface intersects  $\mathbf{R}^2_+ \cup \mathbf{R}^2_-$ in a graph, and when in normal form the crease set is a collection of disjoint circuits/curves of this graph.

\begin{figure}[ht]

\labellist
\small

\pinlabel	$T$	[c]	at	375	395

\endlabellist

\centering
\includegraphics[width=.5\linewidth]{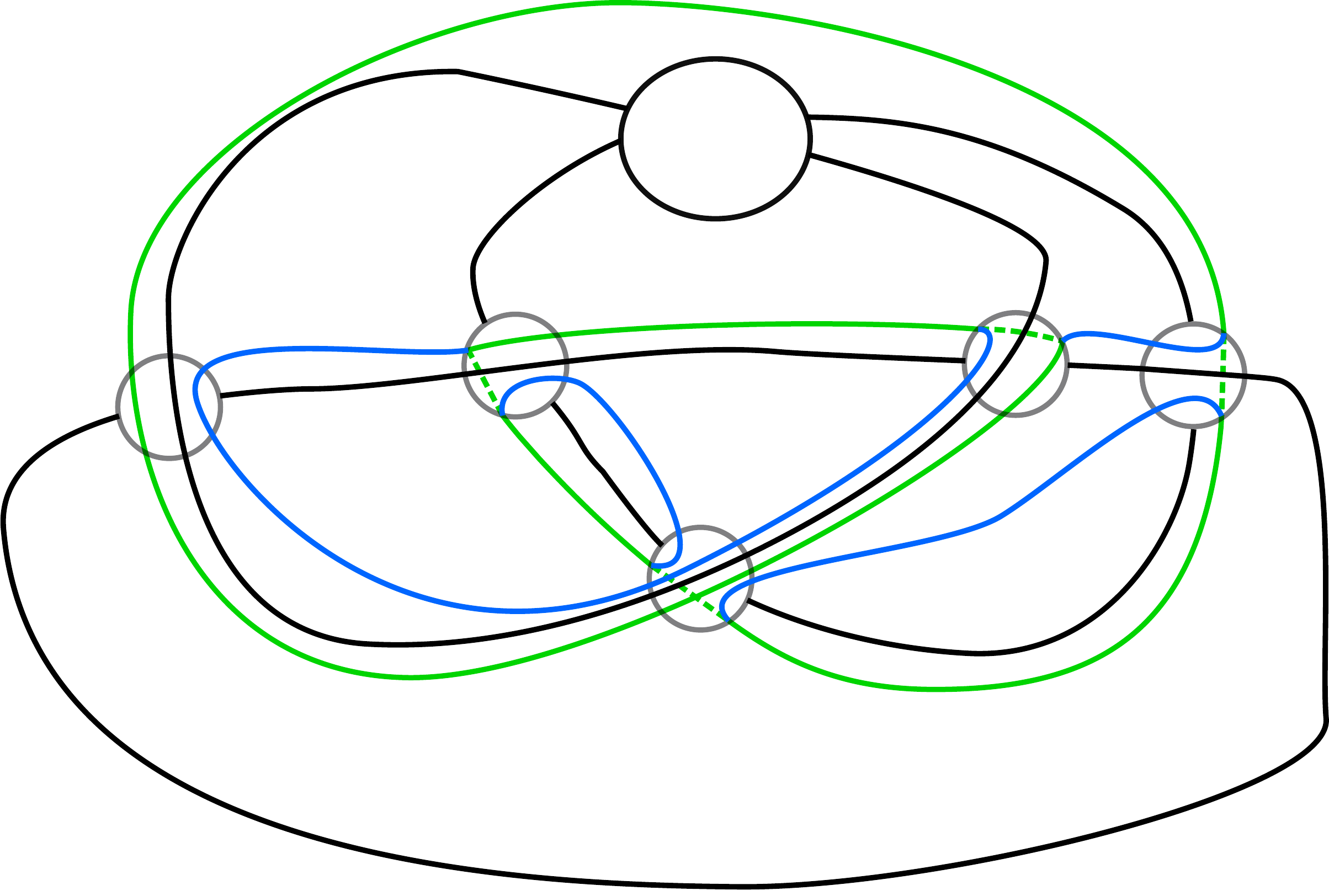}
\caption{The {\blue blue} and {\green green} curves depict $S^2 \cap \mathbf{R}^2_+$ with the green portion indicating which part is contained in the crease set.  There is an arbitrary 2-tangle contained inside the ball labeled $T$.}
\label{Fig:Chee-lik}
\end{figure}

In Fig.~\ref{Fig:Chee-lik} we depict the regular projection (with the crossing ball structure) of a link ``template'' realizing the dimpled $S^2$ of Fig.~\ref{Fig:Cheshire} as a surface in normal form.  The crease set is a subset of the graph $\Gamma = S^2 \cap (\mathbf{R}^2_+ \cap \mathbf{R}^2_-)$ and corresponds to the green curve.  Inside the region of the link template labeled $T$ we can place any $2$-strand tangle.  Thus, there are infinitely many possible link projections that realize the dimpled $S^2$ as a surface in normal form.  This example is illustrative of the general situation (Theorem \ref{theorem: normal form}) which we state here in a more colloquial manner.

\begin{theorem}
\label{theorem: crease set in link projections}
Every isotopy class of regular surface embeddings into $\bR^2 \times \bR$ can be realized as in normal form with respect to infinitely many regular link projections.
\end{theorem}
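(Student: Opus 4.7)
The plan is to argue by direct construction, first producing a single link projection compatible with the given embedding, then generating an infinite family of variants by tangle substitution.

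Fix a regular embedding $\e: S_g \into \bR^2 \times \bR$ representing the given isotopy class, with crease set $\cC$. The first step is to apply a regular isotopy pushing $\e(\cC)$ into the horizontal plane $\bR^2$ except inside small round balls $\{B_i\} \subset \bR^3$, one per transverse double point of $\p \circ \e(\cC)$, whose projections are disk neighborhoods of those double points. Because the two branches of $\e(\cC)$ through each double point have distinct heights, inside $B_i$ we can place one branch on the upper hemisphere $\partial B_i^+$ and the other on the lower hemisphere $\partial B_i^-$. Designating the $B_i$ as crossing balls furnishes $\mathbf{R}^2_+$ and $\mathbf{R}^2_-$, and the intersection $\Gamma := \e(S_g) \cap (\mathbf{R}^2_+ \cup \mathbf{R}^2_-)$ is then a finite planar graph containing $\cC$ as a disjoint union of circuits --- exactly the content of normal form.

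Next, construct a link $L_0 \subset \bR^3 \setminus \e(S_g)$ whose only crossings are the $B_i$. Locally at each $B_i$ the two $L_0$-strands can be chosen on $\partial B_i^+$ and $\partial B_i^-$ transverse to the crease strands and pushed slightly off $\e(S_g)$. Globally, the strand-ends on $\partial B_i^\pm$ must be joined by arcs lying in $\bR^2 \setminus \p\circ\e(\Gamma)$. Each complementary region of that planar graph is a polygon, and a matching of the endpoints on its boundary can be realized by disjoint properly embedded arcs in the disk. A parity argument, together with the fact that $\e(S_g)$ is orientable so that complementary regions alternate above and below, produces a consistent global routing closing up the local strands into an honest link diagram.

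For the infinite family, select a disk $D \subset \bR^2$ disjoint from $\p\circ\e(\Gamma)$ meeting $L_0$ in a trivial $2$-strand tangle; such a disk exists in any complementary region the link traverses twice, and one may always arrange this by inserting an extra unknotted component into $L_0$. Substituting any $2$-tangle $T$ into $D$ produces a new regular link projection, while leaving $\e(S_g)$ in normal form because the substitution happens entirely off the surface. Letting $T$ range over the $(2,n)$-torus tangles produces infinitely many pairwise non-equivalent regular link projections realizing the isotopy class of $\e$. The main obstacle is the global routing step: verifying that the local strand placements at the $B_i$ can always be joined into closed loops inside $\bR^2 \setminus \p\circ\e(\Gamma)$. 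The plan for handling this is to exploit the alternation of upward and downward sheets of $\e(S_g)$ across $\cC$ (provided by regularity together with orientability) to orient the endpoints of each strand on the boundary of every complementary polygon, whence a valid planar matching follows by a handshake/parity count reminiscent of the checkerboard coloring of a knot projection. The remaining steps --- the initial planarizing isotopy, the local placement at each $B_i$, and the tangle substitution --- are routine once the routing is established.
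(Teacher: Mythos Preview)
Your approach is genuinely different from the paper's, and considerably more laborious. The paper's construction of the link is a one-liner: for each crease component $\gamma$, take the two normal push-offs $\gamma_+$ and $\gamma_-$ (one on each side of $\e(S_g)$, using the oriented normal bundle) and declare $L$ to be the union of all of these. After a small perturbation making $\p(L)$ a regular projection, the surface is asserted to be in normal form with the crease set sitting inside $\Gamma$. Because the push-offs are already closed curves, there is no routing problem to solve; the step you flag as the main obstacle simply does not arise. The infinite family is then produced, as you do, by tangle substitution in a region met by two link strands.

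Beyond being more complicated, your route has a real gap in the first step. You assert that after pushing $\e(\cC)$ into $\bR^2$ (away from the balls $B_i$) the resulting $\Gamma$ already exhibits normal form. But at that stage $\Gamma$ is essentially $\cC$ itself, so the components of $S_g\setminus\Gamma$ are precisely the subsurfaces $\bar K$. Normal-form condition~(3) demands each such component be a \emph{disk} that projects \emph{diffeomorphically} onto its image. Neither is guaranteed: some $\bar K$ may be a pair of pants (or worse), and even a disk $\bar K$ whose boundary has turning number different from $\pm 1$ cannot project injectively. Your link, whose only crossing balls sit at double points of $\p\circ\e(\cC)$, does not provide enough additional intersection with $\mathbf{R}^2_\pm$ to cut these pieces down. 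The paper's push-off link, running parallel to every crease curve on both sides of the surface, is what forces $\Gamma$ to be rich enough. Separately, the ``parity argument'' for the global routing is only gestured at; the claim that complementary regions alternate above and below does not by itself yield a planar matching of strand-ends, and would need an actual argument.
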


\subsection{Outline of paper}
\label{subsection: outline}

In \S\ref{section: preliminaries} we give the formal definitions of the crease set and turning number plus some additional concepts of our machinery.  In \S\ref{section: proof of crease set in link projections} we give the proof of Theorem~\ref{theorem: crease set in link projections}.  In \S\ref{section: GB equations} we discuss the Gauss-Bonnet equations and results that are behind Equ.~\ref{Eq: Gauss-Bonnet 1}, which we then use to prove Theorems~\ref{Theorem: weighting uniqueness} and~\ref{Theorem: configuration finiteness}. Then in \S\ref{section: constructing embeddings} we give a construction that establishes Theorem~\ref{Theorem: embedding existence}.  In \S\ref{section: classification set-up} we develop the additional machinery need to prove Theorem \ref{theorem: |cC| = 3}.  In particular, in \S\ref{subsection: branched surface} we observe that for each isotopy class of a regular embedding of a surface into $\bR^2 \times \bR$ there is an associated naturally embedded {\em branched surface with boundary}.  A salient feature of this natural branched surface is a correspondence between its boundary and branching locus curves, and the components of the crease set.  (The reader may correctly suspect that the branched surface associated with the Fig. \ref{Fig:Cheshire} embedding is one containing the Lorenz template \cite{[G-L]}.)
Finally, in \S\ref{Section: problems} we advance directions for further investigation.

\section*{Acknowledgements}
The first author would like to thank Adam Sikora for the brief conversation pre-COVID-19 that was the spark for thinking about the crease set of an embedding. The second author is grateful to the Fields Institute for its support and hospitality.  The authors are grateful to Joel Hass for alerting them to his work.



\section{Preliminaries}
\label{section: preliminaries}

\subsection{Definition of the crease set}
\label{subsection: definition of crease set}
Let $S_g$ be a smooth closed oriented surface of genus $g$. Let $\e:S_g \into \bR^2 \times \bR$ be a smooth embedding and  $\p: \bR^2 \times \bR \rightarrow \bR^2$ be the natural projection onto the first factor in the product structure.

\begin{definition}
The {\it crease set} $\  \cC \subset S_g$ of $\e$ (with respect to $\p$) is the critical point set of $\p\circ \e$.
\end{definition}

We would like to restrict to embeddings where $\cC$ is a nice subset of $S_g$---ideally, a smooth submanifold.

Fix an embedding $\e:S_g \to \bR^3$. Note that for any matrix $A\in{\rm SO}(3)$, post-composing $\e$ by the map $x\mapsto Ax$ gives a new embedding, which we denote $\e_A$.

Let $N: S_g\to S^2$ denote the Gauss map corresponding to $\e(S_g)$, so $N(p)$ is the outward-pointing unit normal vector to $\e(p)$. Observe that $\cC=N^{-1}(Z)$, where $Z\subset S^2$ denotes the equator of $S^2$.

Define $f:S_g\times {\rm SO}(3)\to S^2$ by $f(p,A)=A\cdot N(p)$. This map takes a point $p$ to the outward normal vector at $\e(p)$, then rotates it by $A$. Alternatively, $f$ maps $p$ to its outward unit normal vector in $\e_{A}$; seen this way, $f$ parametrizes the Gauss maps associated to the different embeddings $\e_A$.

We claim that $f$ is a submersion. To see this, observe that $df_{(p,A)}$ restricted to $T_{(p,A)}{\rm SO}(3)$ already surjects: varying $A$ infinitesimally directly corresponds to applying an infinitesimal rotation to $S^2$, perturbing $f(p,A)$ in that direction.

As a submersion, $f$ is consequently transverse to the equator $Z\subset S^2$. By parametric transversality, almost every $f(\,\cdot\,, A)$ is transverse to $Z$. For such $f(\,\cdot\,, A)$, the preimage of $Z$, which is the crease set $\cC$ corresponding to $\e_A$, is a smooth submanifold of $S_g$.

In this case, $\p\circ\e$ restricts to a smooth map on $\cC$. When $d(\p\circ \e|_\cC)$ has rank 0, the tangent line to $\cC$ in $\e(S_g)$ is vertical (with respect to $\p$). These are precisely the points where $\p\circ\e(\cC)$ may fail to be smooth. Notice that unless the local picture of $\e(S_g)$ is like that of the right-side illustration in Fig.~\ref{Fig:creasing}, $\e$ can be perturbed by a generic small rotation to remove this rank-0 point. A rank-0 point cannot be eliminated precisely when it corresponds to a change in the {\em sign} of the crease set (see \S\ref{subsection: Orientation of crease}); we call such a point a {\em corner} of $\cC$.

\begin{definition}
An smooth embedding $\e: S_g\to \bR^2\times\bR$ is {\em regular} with respect to $\p$ if it satisfies the following conditions:
\begin{enumerate}
    \item the Gauss map of $\e(S_g)$ is transverse to the equator of $S^2$;
    \item the differential $d(\p\circ \e|_\cC)$ has rank 1 except for a finite collection of points, which are all corners; and
    \item the immersed, piecewise smooth $\p\circ\e(\cC)$ self-intersects only in transverse double points.
\end{enumerate}
\end{definition}

The preceding discussion shows regular embeddings are generic. Unless otherwise stated, for the remainder of this paper all embeddings will be regular.  We consider $\cC\subset S_g$ to be a collection of smooth s.c.c.'s with marked points corresponding to the corners.

\subsection{Folding sign of a crease curve}
\label{subsection: Orientation of crease}

Since any embedding of an orientable closed surface into Euclidean $3$-space is the boundary of a unique compact $3$-submanifold, $M_S$, we have a well-defined orientation for $\e(S_g)$ coming from the outward pointing normal vector field with respect to this submanifold.  For this natural orientation of $\e(S_g)$, in the neighborhood $N_x \subset S_g$ of a rank-1 point $x \in \cC$ there are two possible embeddings.  As illustrated in Fig.~\ref{Fig:crease orientation}, either the outward pointing normal vectors are ``pointing towards each other'' or ``away from each other''.  For the former situation we say the {\em crease folds negatively (or ``$-$'') along the segment}.  (See left illustration of Fig.~\ref{Fig:crease orientation}.)  For the latter situation the {\em crease folds positively (or ``$+$'') along the segment}.  (See right illustration of Fig.~\ref{Fig:crease orientation}.)

\begin{figure}[ht]
\centering
\includegraphics[width=.5\linewidth]{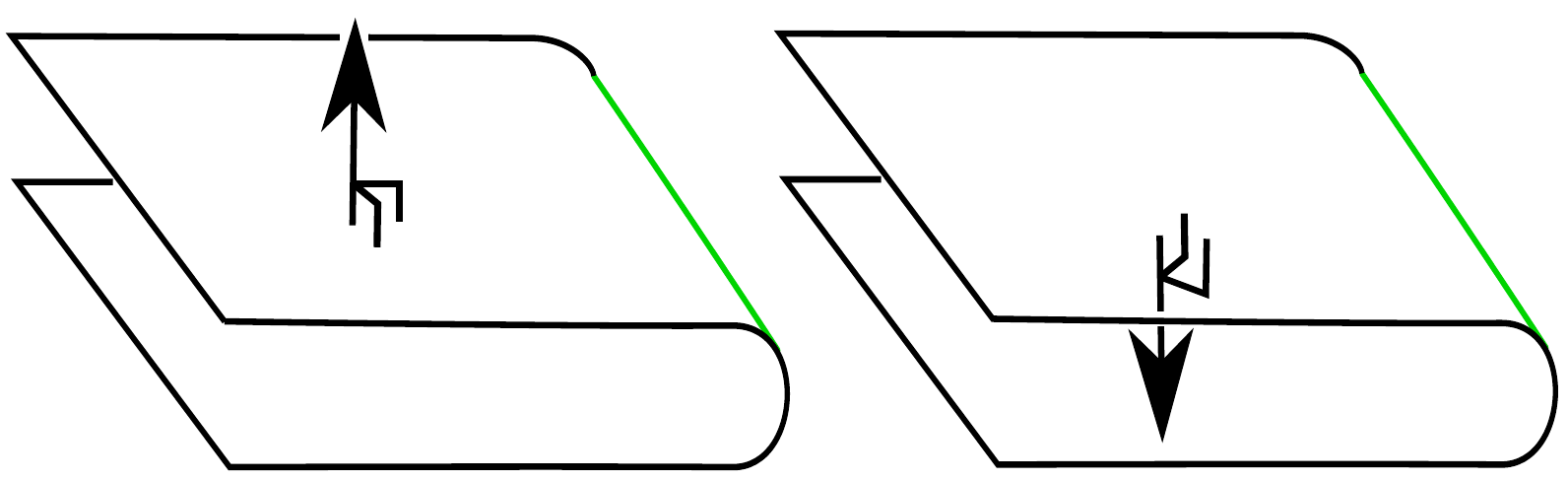}
\caption{On the left, a positive fold, and on the right, a negative fold.}
\label{Fig:crease orientation}
\end{figure}

Alternatively,  we can assume that $N_x \subset S_g$ has been chosen small enough neighborhood of $x \in S_g$ so that $\p\circ \e (\cC) \cap \p\circ \e (N_x \setminus \cC) = \varnothing$.
Let $y \in \p\circ \e( N_x \setminus \cC) $ and $\p^{-1}(y) \cap \e(N_x)  = \{ x_1 , x_2 \} \subset \e(N_x \setminus \cC)$.  Let $\overline{x_1 x_2} \subset \bR^2 \times \bR$ be the compact line segment for which $\p(\overline{x_1 x_2}) = y$.  Then $\e(N_x \cap \cC)$ is a negatively folding segment if $\overline{x_1 x_2} \cap M_S = \{x_1 , x_2 \}$.  If $\overline{x_1 x_2} \cap M_S= \overline{x_1 x_2}$ then $\e(N_x \cap \cC)$ is a positively folding segment.

Note that along arcs of rank-1 points of $\cC$, the folding direction is constant; it can only switch at rank-0 points.  Any component of $\cC$ that is without corners is either a positively folding s.c.c.\ or a negatively folding s.c.c., and more generally a component passes through an even number of corners. We denote the collection of all positive arcs and components $\cC^+$ and similarly all negative arcs and components $\cC^-$.

The reader should observe that the folding sign of either a segment in $\cC$ between two corners, or a corner-free component of $\cC$ is an invariant of an embedding's regular isotopy class.

\subsection{Partial turning numbers and external angles}
\label{subsection: corners}

We start by defining the  ``partial turning number'' of an oriented smooth arc in $\bR^2$.  Let $\alpha : [0,1] \into \bR^2$ be a smooth unit speed arc and define $\alpha^\prime (0) := \lim_{s \rightarrow 0^+} \alpha^\prime (s)$ and $\alpha^\prime (1) := \lim_{s \rightarrow 1^-} \alpha^\prime (s)$.  Then $\alpha^\prime (s) = (\cos(s(t)) , \sin(s(t)))$ for some angle function $s(t)$.   Then the {\em partial turning number} is $$t_p(\alpha) := \frac{s(1) - s(0)}{2 \pi} \in \bR.$$

Note that in the case that $\alpha$ is a smooth closed curve---$\alpha(0) = \alpha(1)$ and $\alpha^\prime (0) = \alpha^\prime (1)$---this definition agrees with the usual definition of the turning number:
$$t(\alpha) = t_p(\alpha)=\frac{s(1) - s(0)}{2 \pi} \in \bZ.$$

We observe that the above definitions are independent of parameterization, and are thus well defined.

Fix an orientation of $\bR^2\subset \bR^2\times\bR$. Let $\K \subset S_g \setminus \cC $ be a connected component.  We consider the closure of $\K$, that is $\bar\K = \K \cup \partial \K$ where $\partial \K \subset \cC$.  (We will be utilizing this association between $\K$ and $\bar\K$ throughout this paper and $\bar\K$ will always be notation for a compact connected surface.) The orientation of $\bR^2$ pulls back under $\p\circ\e$ to give an orientation of $\bar{\K}$; this extends to an orientation on each curve $\gamma\subset\partial \K$. The reader should observe that $\gamma$ is necessarily a boundary component of two distinct components $\K,\K'$ of $S_g \setminus \cC$, and both will induce the same orientation on $\gamma$. Thus we can consider $\cC$ as a collection of oriented curves.

Next, we consider the ``external angle'' of a corner $x \in\gamma\subset\cC$.  Let $\alpha, \beta \subset \p \circ \e(\gamma)$ be adjacent smooth arcs to $\p\circ\e(x)$.   For all such corners we require the technical assumption that the angle between $\alpha$ and $\beta$ at $\p\circ \e (x)$ be $\frac{\pi}{2}$---the angle between their corresponding endpoint tangent vectors.   The reader should observe that this assumption is not limiting since we can perform an isotopy (and, indeed, a regular isotopy) of $\e(S_g)$ in a neighborhood of each corner to obtain this technical assumption.

\begin{figure}[ht]
\centering

\begin{subfigure}{.32\textwidth}

\labellist
\tiny

\pinlabel	external~angle [r] at 174 316

\endlabellist
\centering
\includegraphics[width=\textwidth]{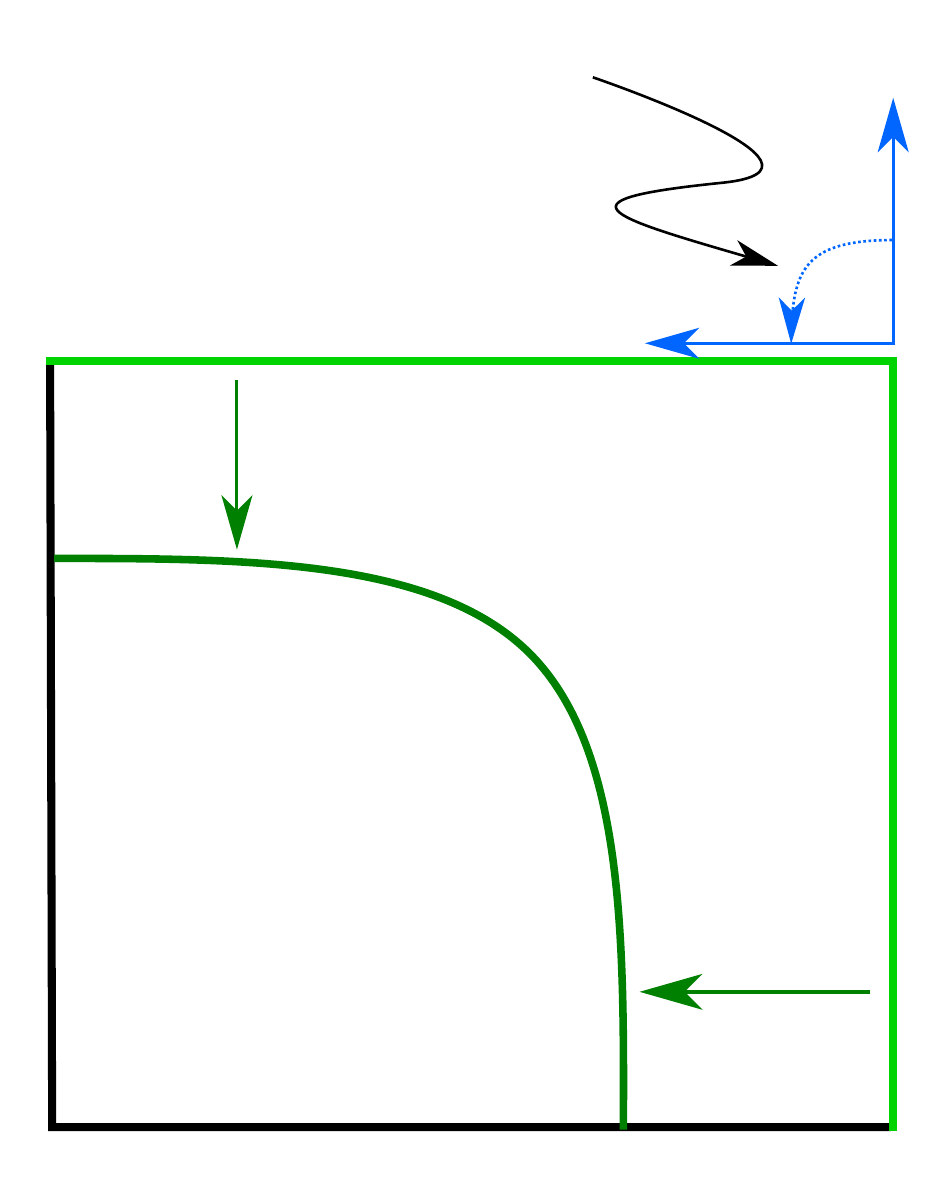}
\caption{An external angle of $\frac{\pi}{2}$.}
\label{Fig: corner-twist1}
\end{subfigure}
\hspace{1em}
\begin{subfigure}{.4\textwidth}

\labellist
\tiny

\pinlabel	external~angle [t] at 192 151

\endlabellist
\centering
\includegraphics[width=\textwidth]{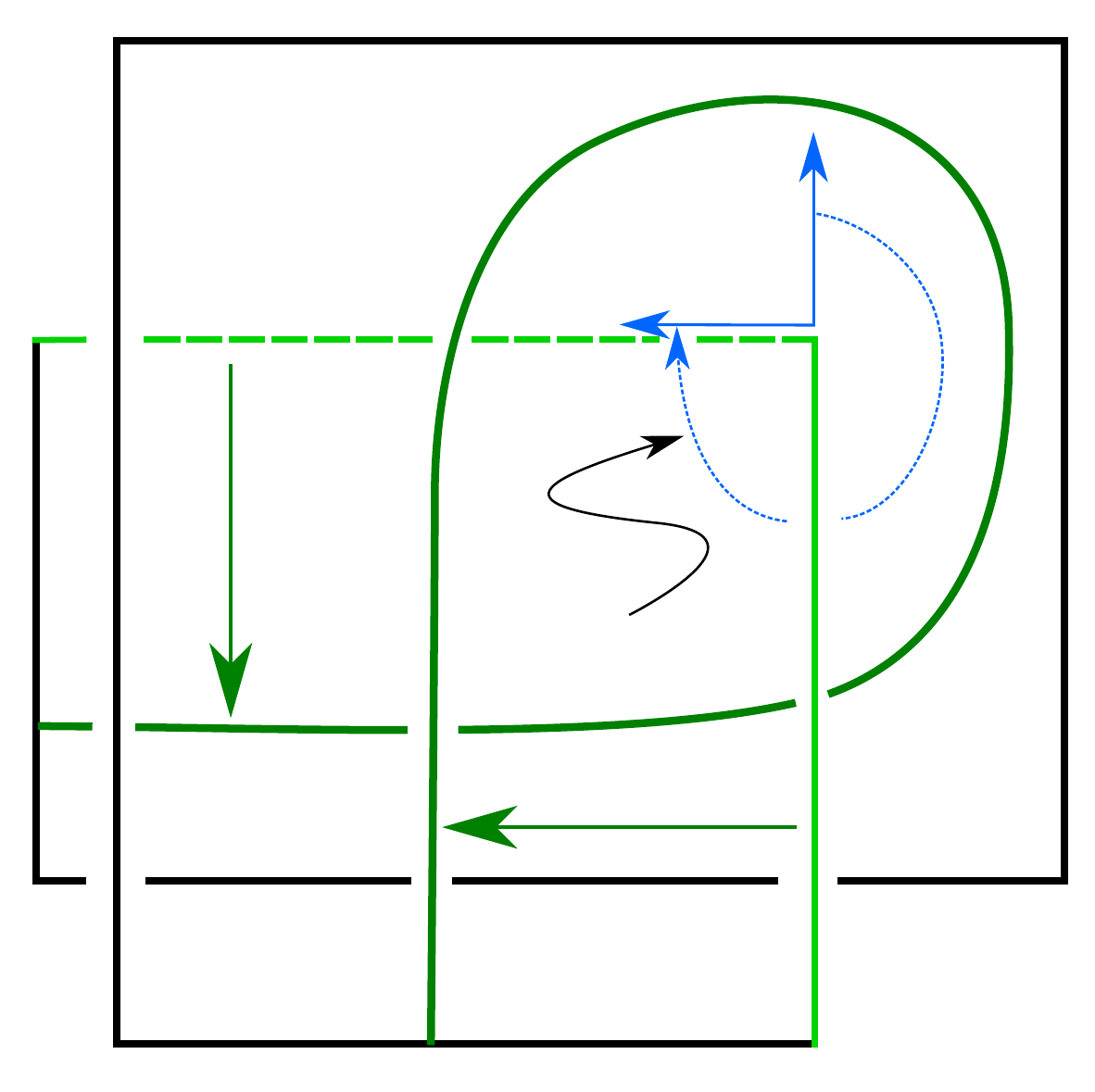}
\caption{An external angle of $\frac{3\pi}{2}.$}
\label{Fig: corner-twist2}
\end{subfigure}
\caption{A top-down view of the two layers of $\e(N_x)$ at a corner $x$. The {\green green} segments depict the portion of $N_x$ that is in $\cC$.  The {\blue blue} vectors correspond to the associated external angles.  The {\dgreen dark green} arcs correspond to the push-offs of $\cC$ into $N_x$.}
\label{Fig: corner-twist}
\end{figure}

We consider the embedding of the two components of $N_x \setminus \cC$ when $x$ is a corner of $\cC$.  In Fig.~\ref{Fig: corner-twist} we have an illustration of the two embedded components of $\e(N_x \setminus \cC)$. We are interested in computing the {\em external angle with respect to $K$}, $\angle_K(x)$, at $\p\circ \e (x)$, where $K$ is the subsurface containing a given component of $N_x\setminus \cC$. 
As shown in Fig.~\ref{Fig: corner-twist}, this is the angular ``swing'' between the tangent vector coming into $\p\circ\e(x)$ and the tangent vector coming out of $\p\circ\e(x)$.
As we have fixed the two arcs meeting at $\p\circ\e(x)$ to meet at an angle of $\frac{\pi}{2}$, there are two possible values for $\angle_K(x)$: $+\frac{\pi}{2}$, as in Fig.~\ref{Fig: corner-twist1}, and $-\frac{3 \pi}{2}$, shown in Fig.~\ref{Fig: corner-twist2}. 
(Here we are making the positive/negative sign assignments in the classical manner---counterclockwise is positive, clockwise is negative.)

An alternative approach to determining the external angle at a corner is to take a push-off of $\e(N_x \cap \cC)$ into the two pieces of $\e(N_x \setminus \cC)$ and compute the partial turning numbers of the projected arcs.  In particular, let $ \alpha^x = N_x \cap \cC$ be a crease segment with corner $x$.  We consider the two smooth push-offs, $\hat{\alpha}^x_{+{1}/{2}}$ and $\hat{\alpha}^x_{-{3}/{2}}$, into the two components of $N_x \setminus \alpha^x$. In both illustrations of Fig.~\ref{Fig: corner-twist}, the oriented brown curve depicts such a push-off, with $\p\circ \e (\hat{\alpha}^x_{+{1}/{2}})$ on the left and and $\p\circ \e (\hat{\alpha}^x_{-{3}/{2}})$ on the right.  Then $t_p(\hat{\alpha}^x_{+{1}/{2}}) = + \frac{1}{4}$, since the tangent vector of $\p\circ e (\hat{\alpha}^x_{+{1}/{2}})$ has a counterclockwise $\frac{1}{4}$-turn; and, the partial turning number is $t_p(\hat{\alpha}^x_{-{3}/{2}}) = -\frac{3}{4}$ since the tangent vector of $\p\circ \e (\hat{\alpha}^x_{+{3}/{2}})$ has a clockwise $\frac{3}{4}$-turn.  The external angle of $x$ with respect to a component of $N_x \cap \cC$ is then $2\pi$ times the associated partial turning number.

\subsection{Weighting of the crease set}
\label{subsection: weighting curves}

Let $\K \subset S_g \setminus \cC $ be a connected component.
Let $\gamma \subset \partial \K$ be a boundary component with corners $\{ x_1, \ldots , x_n \} \subset \gamma$ connecting smooth subarcs $\{\alpha_1 , \ldots , \alpha_n\} \subset \gamma$.   Then we define the {\em Gauss-Bonnet weight with respect to $K$} of $\gamma$ to be
$$t_\K (\gamma) =  \sum_{1 \leq i \leq n} t_p(\p\circ\e(\alpha_i)) + \frac{\angle_K(x_i)}{2\pi}.$$
When $\gamma$ has no corners, this is the turning number of $\p\circ\e(\gamma)$. The reader should observe that Gauss-Bonnet weights are well-defined, independent of our orientation choice for $\bR^2$.

An equivalent method of defining $t_\K(\gamma)$ is to take a smooth push-off of $\gamma$ into $\K$, $\hat{\gamma} \subset \K$, that inherits its orientation from $\gamma$; then $t_\K(\gamma) := t(\p\circ\e(\hat{\gamma}))$. This is effectively applying the alternative definition of $\angle_K$ to each corner along $\gamma$ simultaneously. In particular, this shows that $t_K$ always takes integral values and, moreover, does not depend on our choice to fix the corner angles of $\p\circ\e(\cC)$ as $\frac{\pi}{2}$.

\subsection{The decorated surface}
\label{subsection: further labeling}

We now consolidate the data that we have developed in previous sections into a {\em decorated surface}, $(S_g, \cC, {\bf T})$, associated with an embedding, $\e: S_g \into \bR^2 \times \bR$.  Specifically, ${\bf T}$ denotes the following function.

\noindent
{\bf Turning number function: ${\bf T}: \cC \into \bZ^2$.}  Let $\gamma \subset \cC$ and consider the distinct connected components $\K, \K^\prime \subset S_g \setminus \cC$ such that $\gamma \subset \partial \bar\K$ and $\gamma \subset \partial \bar{\K^\prime}$.  Then ${\bf T} : \gamma \mapsto (t_\K(\gamma),t_{\K'}(\gamma)) \in \bZ^2$, the Gauss-Bonnet weights of $\gamma$ with respect to $\K$ and $\K'$. 
If $\gamma$ has no corners then $t_\K=t_{\K'}$ and we simplify to a $1$-tuple.



\section{Realization coming from link complements}
\label{section: proof of crease set in link projections}

Let $L: S^1 \sqcup \cdots \sqcup S^1 \into \bR^2 \times \bR$ be an embedding of a link such that $(\p\circ L)(S^1 \sqcup \cdots \sqcup S^1) \subset \bR^2$ is a regular projection of a link $L$.  We now augment this link projection by replacing a small disk neighborhood of each crossing with a ``crossing ball'' as shown in Fig.~\ref{Fig: crossing ball1}.  Let $\mathbf{R}^2_+$ (respectively, $\mathbf{R}^2_-$) be $\bR^2$ with each disk neighborhood of a crossing replaced by the upper (respectively, lower) hemisphere of a crossing ball.
Let $\mathbf{R}^3_+$ (respectively, $\mathbf{R}^3_-$)  be upper half-space (respectively, lower half-space) that has $\mathbf{R}^2_+$ (respectively, $\mathbf{R}^2_-$) as its boundary plane.  The salient feature of this construction is that there is now an embedding, $\bar{L}: S^1 \sqcup \cdots \sqcup S^1 \into \mathbf{R}^2_+ \cup \mathbf{R}^2_- \,(\subset \bR^2 \times \bR)$ such that $\p\circ L = \p\circ \bar{L}$.  We will use $\bar{L}$ to denote of the embedding of the link into $\mathbf{R}^2_+ \cup \mathbf{R}^2_-$ for the remainder of this discussion.  The reader should observe that $L$ and $\bar{L}$ are isotopic to each other in $\bR^2 \times \bR$.

\begin{figure}[ht]
\centering
\hfill
\begin{subfigure}{.4\textwidth}

\labellist
\small

\pinlabel	crossing	[bl]	at	-5	116.21
\pinlabel	ball	[tl]	at	-5	116.21
\pinlabel	$L$	[tl]	at	190	18

\endlabellist

\centering
\includegraphics[width=\textwidth]{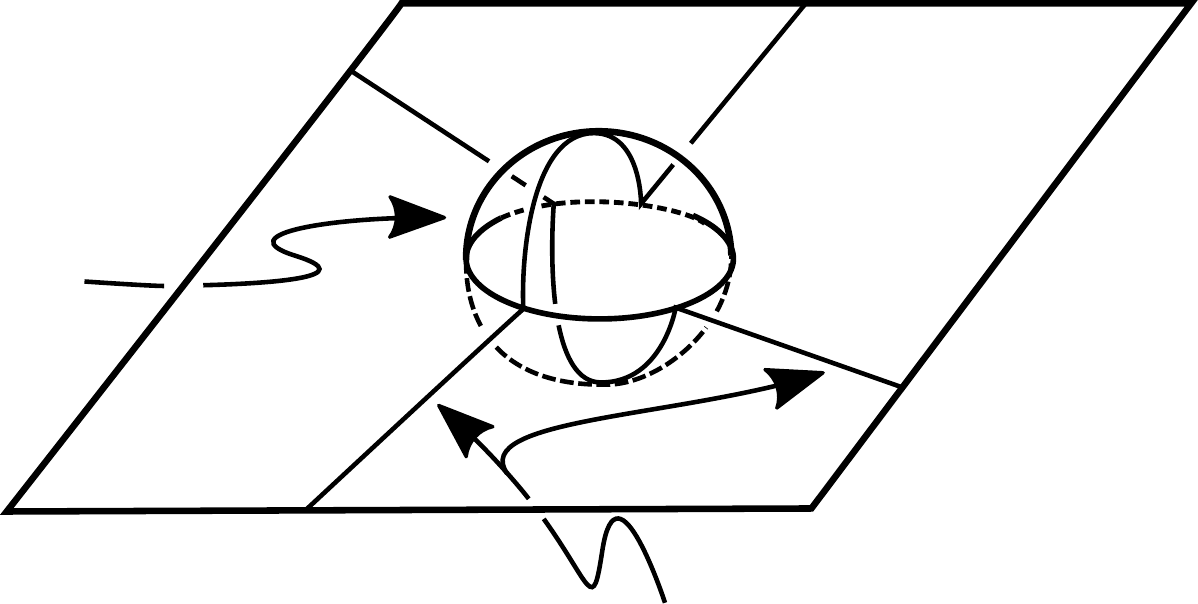}
\caption{A crossing ball.}
\label{Fig: crossing ball1}
\end{subfigure}
\hfill
\begin{subfigure}{.4\textwidth}
\centering
\includegraphics[width=.5\textwidth]{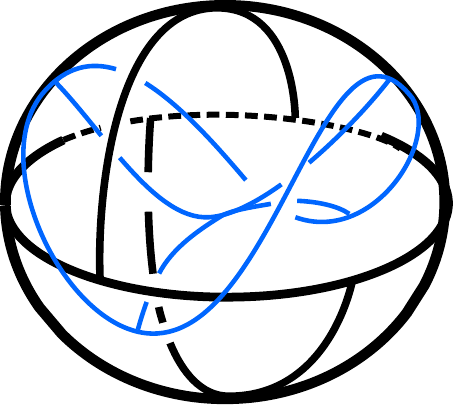}
\caption{A saddle disk, depicted in {\blue blue}.}
\label{Fig: crossing ball2}
\end{subfigure}
\hfill\ 
\caption{How an essential surface lies near a crossing of $L$.}

\label{Fig: crossing ball}

\end{figure}

\subsection{Normal form}
\label{subsection: normal form link}

We consider an essential closed surface, $S \subset \bR^3 \setminus \bar{L}$.  By general position arguments we can assume that $S \cap \mathbf{R}^2_\pm$ is a collection of s.c.c.'s and that any nonempty intersection of $S$ with a crossing ball is a collection of ``saddle'' disks.  (See Fig.~\ref{Fig: crossing ball2}.)  Moreover, by requiring $S$ to be an essential surface we can assume that $S \cap \mathbf{R}^3_+$ and $S \cap \mathbf{R}^3_-$ are collections of disks.  The embedding of $S$ is in {\em normal form} with respect to $\mathbf{R}^2_\pm$ if the following three conditions hold:

\begin{itemize}
    \item[(1)] every s.c.c.\ of $S \cap \mathbf{R}^2_\pm$ intersects at least two distinct crossing balls;
    \item[(2)] for a crossing ball $\mathbf{B}$ and s.c.c.\ $\gamma \in S \cap \mathbf{R}^2_\pm$, $\gamma$ intersects $\mathbf{B}$ in at most 2 arcs.  If $\gamma$ meets some $\mathbf{B}$ in two arcs, they are on the boundary of a common saddle disk in $\mathbf{B}$.
    \item[(3)] $S \setminus (S \cap \mathbf{R}^2_\pm)$ is a collections of open disks in $\mathbf{R}^3_\pm$ and saddle disks in crossing balls.  For any disk component $\D_\gamma \subset S \cap \mathbf{R}^3_\pm$, with boundary curve $\gamma \in S \cap \mathbf{R}^2_\pm$, we require $\p$ restricts to a diffeomorphism from $\D_\gamma$ onto its image, the disk that $\p(\gamma) \,(\subset \bR^2)$ bounds.  (The reader should observe that requiring $S \setminus (S \cap \mathbf{R}^2_\pm)$ to be a collection of disks is natural due to the incompressibility of the essential surface $S$.) 
\end{itemize}
Using general position arguments that are well known in the literature and a number of graduate texts (see for example \cite{[H-T-T1],[H-T-T2],[Li], [M1],[M2]}), one can establish the following theorem.  Thus, due to the repetitive nature of any proof we offer the following result without argument.
\begin{theorem}
\label{theorem: normal form}
Let $L \subset \bR^2 \times \bR$ be a link and $\bar{L}$, $\mathbf{R}^2_\pm$ be the associated spaces as described above.  Let $F \subset (\bR^2 \times \bR) \setminus L$ be an closed incompressible surface in the link's exterior.
Then there exists a closed incompressible surface $\bar{F} \subset (\bR^2 \times \bR) \setminus \bar{L}$ such that:
\begin{itemize}
    \item[(a)] $\bar{F}$ is in normal form with respect to the $\mathbf{R}^2_\pm$;
    \item[(b)] $\chi(F) = \chi(\bar{F})$;
    \item[(c)] if $L$ is a non-split link then $(L , F)$ is pairwise isotopic to $(\bar{L}, \bar{F})$ in $\bR^2 \times \bR$;
\end{itemize}
\end{theorem}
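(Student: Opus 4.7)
The strategy is classical: starting with $F$ in general position with respect to the crossing-ball decomposition, perform a finite sequence of complexity-reducing moves until the three conditions defining normal form all hold. Throughout, incompressibility of $F$ is invoked to convert exterior simplifying disks into actual isotopies of the surface. A reasonable complexity to track is the lexicographic pair given by $|F \cap (\mathbf{R}^2_+ \cup \mathbf{R}^2_-)|$ and the total number of saddle disks in all crossing balls.

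First I would put $F$ in general position so that $F \cap \mathbf{R}^2_\pm$ is a finite collection of smooth s.c.c.'s and $F$ meets each crossing ball $\mathbf{B}$ transversely. Inside each $\mathbf{B}$, a standard innermost-disk argument on $\partial \mathbf{B}$, together with incompressibility of $F$, lets us isotope $F \cap \mathbf{B}$ to a union of saddle disks, each separating the two link strands in $\mathbf{B}$. Next, to enforce condition (1), I would remove any s.c.c.\ $\gamma \subset F \cap \mathbf{R}^2_\pm$ that meets zero or one crossing balls: in the zero-ball case, $\gamma$ bounds a disk in $\mathbf{R}^2_\pm \setminus \bar L$, and by incompressibility also a disk in $F$, permitting a disk-swap isotopy; in the one-ball case, $\gamma$ together with an arc on $\partial \mathbf{B}$ cobounds such a disk, and an analogous exchange pushes the intersection across $\mathbf{B}$. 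Condition (2) is then imposed by the same technique applied to any pair of arcs of $\gamma \cap \mathbf{B}$ that fail to cobound a saddle disk of $\mathbf{B}$: outermost-arc position on $\partial \mathbf{B}$ combines with incompressibility of $F$ to supply the needed isotopy disk.

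For condition (3), once (1) and (2) hold, each disk component $\D_\gamma \subset F \cap \mathbf{R}^3_\pm$ has boundary a s.c.c.\ $\gamma \subset \mathbf{R}^2_\pm$. The vertical disk $\D'$ above (or below) the planar region bounded by $\p(\gamma)$ shares this boundary, so $\D_\gamma \cup \D'$ is a 2-sphere in $\mathbf{R}^3_\pm$ which, for non-split $L$, is disjoint from $\bar L$ and hence bounds a ball. An isotopy through this ball replaces $\D_\gamma$ by $\D'$, which projects diffeomorphically under $\p$ as required. This is the sole place where the non-split hypothesis enters, which is why (c) needs it; without it, a component of $L$ could be trapped between $\D_\gamma$ and $\D'$, obstructing the final isotopy but still permitting a surgery that preserves $\chi$.

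The main obstacle I anticipate is the bookkeeping at the crossing balls in the step achieving (2): one must verify that every pair of arcs of $\gamma \cap \mathbf{B}$ violating the condition really does supply a disk simultaneously bounded by a subdisk of $F$, meeting the strands of $\bar L$ trivially. This requires a careful enumeration of the local configurations of saddle disks and arcs at each $\mathbf{B}$, a routine but notationally heavy case analysis. Euler characteristic preservation then follows because every intermediate move is either an ambient isotopy or an isotopy in the link complement, both of which trivially preserve $\chi$, giving $\chi(F) = \chi(\bar F)$; in the non-split case, concatenating all the ambient isotopies delivers the pairwise isotopy $(L, F) \simeq (\bar L, \bar F)$ asserted in (c).
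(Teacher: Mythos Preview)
The paper does not actually prove this theorem: immediately before the statement it says the result follows from ``general position arguments that are well known in the literature'' and is offered ``without argument,'' deferring to \cite{[H-T-T1],[H-T-T2],[Li],[M1],[M2]}. Your sketch is precisely the classical innermost-disk/complexity-reduction argument those references contain, so in that sense you are in full agreement with the paper's (implicit) approach.

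One small gap worth flagging in your outline: in the step for condition~(3) you write ``each disk component $\D_\gamma \subset F \cap \mathbf{R}^3_\pm$,'' tacitly assuming the components of $F \cap \mathbf{R}^3_\pm$ are already disks. That is part of what condition~(3) asserts and must be arranged first. The standard fix is another incompressibility move: since $\mathbf{R}^3_\pm$ is a half-space (hence irreducible and boundary-irreducible), any non-disk component of $F \cap \mathbf{R}^3_\pm$ admits a compressing disk there, which by incompressibility of $F$ bounds a disk in $F$ and so yields an isotopy (or, in the split case, a $\chi$-preserving surgery) reducing complexity. Once that is inserted before your $\D_\gamma \cup \D'$ sphere argument, the sketch is complete.
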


We caution that not all surfaces in normal form with respect to a link projection are incompressible/essential, despite being ``captured'' by a link projection. For examples, projections of {\em hard unknots} (\cite{[G],[K-L]}) will have their non-essential peripheral torus in normal form with respect to their projection.

The salient feature of such a surface in normal form is that the curves of crease set live in the 3-valent graph, $\Gamma = S \cap (\mathbf{R}^2_+ \cup \mathbf{R}^2_-)$.  This can be seen from condition (3) above.  Since for each open disk $\D \subset S \setminus \Gamma$ we have $\p|_{\D}$ is a diffeomorphism onto an open disk in $\bR^2$, we can place a flat structure on the disks of $S \setminus \Gamma$.  Thus, the crease set of a surface in normal form will be exactly the set of edges of $\Gamma$ where the a flat structure of a disk cannot be extended across an edge to an adjacent disk of $S \setminus \Gamma$.

The condition for deciding which edges of $\Gamma$ live in $\cC$ is straight forward.  Let $\alpha \subset S \cap (\mathbf{R}^2_+ \cap \mathbf{R}^2_-) \subset \Gamma$ be an arc which will necessarily lie away from crossing balls.  Then $\alpha$ will be adjacent to two disks $\D_+ \subset S \cap \mathbf{R}^3_+$ and $\D_- \subset S \cap \mathbf{R}^3_-$.  If $\p({\rm int}\,\D_+) \cap \p({\rm int}\,\D_-) \not= \varnothing $ near $\alpha$ then $\alpha$ is in the crease set of $S$.  (Here, ${\rm int}\,\D_\pm$ is the interior of the disks.)  Similarly, consider an arc $\alpha \subset S \cap \partial \mathbf{B}$, where $\mathbf{B}$ is a crossing ball.  Then $\alpha$ is adjacent to a disk $\D \subset S \cap (\mathbf{R}^3_+ \cup \mathbf{R}^3_-)$ and a saddle disk $\D_{\mathbf{B}} \subset \mathbf{B}$.  If $\p({\rm int}\,\D) \cap \p({\rm int}\,\D_{\mathbf{B}}) \not= \varnothing$ near $\alpha$, then $\alpha$ is in the crease set of $S$.

\subsection{Proof of Theorem~\ref{theorem: crease set in link projections}}
\label{subsection: proof of crease set in link projections}

Since $\e(S_g) \subset \bR^2 \times \bR$ is an orientable surface, we choose an orientation and we let ${\bf N}$ be the associate oriented normal bundle.

Next, for each component, $\gamma \in \cC$, we take the two smooth normal push-offs, $\gamma_+$ in the $N$-direction and $\gamma_-$ in the $-N$-direction.  After a small isotopy that places these push-offs in position so that we obtain a regular link projection under $\p$, it is readily confirmed that $\e(S_g)$ is in normal form with respect to $L$ and $\p \circ \e(\cC) \subset \p(\Gamma)= \e(S_g) \cap (\mathbf{R}^2_+ \cup \mathbf{R}^2_-)$.

Once a single link projection $\p(L)$ is obtained such that $\p \circ \e (\cC) \subset \Gamma $, an infinite family of links is obtained by letting strands that intersect a common region of $(\mathbf{R}^2_+ \cup \mathbf{R}^2_-) \setminus \Gamma$ of the link entangle each other.  (See the tangle $T$ in Fig.~\ref{Fig:Chee-lik}.)
 \endproof


\section{Gauss-Bonnet equations}
\label{section: GB equations}

In this section we will show how the classical Gauss-Bonnet Theorem from differential geometry governs the behavior of the turning number function of a decorated surface.

Specifically, let $(F,\mathpzc{g})$ be a compact oriented Riemannian surface with boundary.  (For a development of this topic please see \cite{[L]}.)  We let ${\bf K}$ denote Gaussian curvature and $k_g$ denote geodesic curvature.  We will be considering the case where the components of $\partial F$ are piecewise smooth, so we let $\{v_1, \ldots , v_m \} \subset \partial F$ be the collection of boundary vertices.  We denote the external angle at a vertex by $\theta(v_i), 1 \leq i \leq m$.  Then the Gauss-Bonnet Theorem states:
\begin{equation}
\label{equation: Gauss-Bonnet}
    \int_F {\bf K} \ dA + \int_{\partial F} k_g \ ds + \sum_{v_i \in \partial F} \theta(v_i) = 2 \pi \cdot \chi(F).
\end{equation}

For a regular embedding $\e: S_g \into \bR^2 \times \bR$, we recall the discussion of the subsurface $\bar\K \subset S_g$ from \S\ref{subsection: weighting curves}.  We now adapt Equ.~\ref{equation: Gauss-Bonnet} to our study of regular embeddings of surfaces into $\bR^2 \times \bR$.

\begin{proposition}
\label{proposition: GB for barK}
Given a regular embedding $\e: S_g \into \bR^2 \times \bR$ and an associated connected subsurface, $\K \subset S_g \setminus \cC$, for the restricted regular embedding $\e: \bar\K \into \bR^2 \times \bR$, we have
\begin{equation}
    \label{equation: GB for barK}
    \sum_{\gamma \in \partial \bar\K} t_\K(\gamma) = \chi(\bar\K).
\end{equation}
\end{proposition}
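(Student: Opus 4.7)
The plan is to apply the classical Gauss--Bonnet formula (Equation~\ref{equation: Gauss-Bonnet}) to $\bar\K$ equipped with the flat Riemannian metric $\mathpzc{g} = (\p\circ\e)^*\mathpzc{g}_{\mathrm{Euc}}$ obtained by pulling back the Euclidean metric under $\p\circ\e$. Since $\K \subset S_g \setminus \cC$, the differential $d(\p\circ\e)$ has full rank on $\K$, so $\p\circ\e|_\K$ is a local diffeomorphism onto its image and $\mathpzc{g}$ is a genuine Riemannian metric on the interior. With the orientation on $\bar\K$ pulled back from $\bR^2$ (as in \S\ref{subsection: corners}), the map $\p\circ\e$ is an orientation-preserving local isometry, and the boundary orientation induced on each $\gamma \subset \partial\bar\K$ agrees with the one used to define $t_\K(\gamma)$.

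Next, I would identify each term of (\ref{equation: Gauss-Bonnet}) in this setting. The curvature integral $\int_{\bar\K}\bK\,dA$ vanishes because $\mathpzc{g}$ is flat wherever it is smooth. For each smooth subarc $\alpha_i$ of a boundary component $\gamma \subset \partial\bar\K$, the local isometry property of $\p\circ\e$ gives $\int_{\alpha_i} k_g\,ds = 2\pi\, t_p(\p\circ\e(\alpha_i))$, directly from the definition of the partial turning number. At each corner $x_i$, the external angle of the piecewise-smooth boundary of $(\bar\K,\mathpzc{g})$ coincides with $\angle_\K(x_i)$: in one local configuration of Fig.~\ref{Fig: corner-twist} the interior angle of the pullback metric is $\pi/2$ so the external angle is $+\pi/2$, and in the other the metric wraps around with interior angle $5\pi/2$ so the external angle is $-3\pi/2$. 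Substituting these three identifications into (\ref{equation: Gauss-Bonnet}) and dividing by $2\pi$ regroups the left-hand side as $\sum_{\gamma \in \partial\bar\K} t_\K(\gamma)$ by the definition in \S\ref{subsection: weighting curves}, yielding the claimed equality.

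The main obstacle I anticipate is justifying the use of (\ref{equation: Gauss-Bonnet}) at conical corners whose interior angle exceeds $2\pi$. The piecewise-smooth Gauss--Bonnet formula applies in this generality once external angles are defined intrinsically as the signed angular difference between the incoming and outgoing unit tangent vectors along the boundary, without requiring interior angles to lie in $[0,2\pi]$. Alternatively, one can sidestep the singularity entirely by invoking the push-off characterization of $t_\K$ from the end of \S\ref{subsection: weighting curves}: replacing $\partial\bar\K$ by a smooth push-off into $\K$ produces a smooth subsurface $\K' \subset \K$ with $\chi(\K') = \chi(\bar\K)$ whose boundary turning numbers equal the $t_\K(\gamma)$'s, and the classical smooth Gauss--Bonnet theorem applied to $(\K',\mathpzc{g})$ then gives the result immediately.
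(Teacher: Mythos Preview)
Your proposal is correct and follows essentially the same route as the paper: pull back the flat Euclidean metric via $\p\circ\e$, observe that the Gaussian curvature vanishes, identify the geodesic curvature integral on each smooth arc with $2\pi$ times its partial turning number and the external angle at each corner with $\angle_\K(x_i)$, and apply Gauss--Bonnet. Your additional remarks about the $-3\pi/2$ corner case and the push-off alternative are more careful than the paper itself, which simply asserts the external-angle identification without further comment.
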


\begin{proof}
We start by giving $\bar\K$ the Riemannian metric that corresponds to pulling back the flat Riemannian metric of $\bR^2$ to $\bar\K$ by the map $(\p\circ \e)^{-1}$.  Then the Gaussian curvature on $\bar\K$ is ${\bf K} \equiv 0$.  Moreover, for any smooth arc, $\alpha \subset \partial \bar\K$, the geodesic curvature, $k_g(x), \ x \in \alpha$, corresponds to the signed curvature, $k_s( \p\circ \e (x))$.  That is, for the smooth arc $\p\circ \e (\alpha) \,(\subset \bR^2)$, the signed curvature $k_s |_{ \p\circ \e (x)}$ is equal to the classical Frenet-Serret curvature times $\pm 1$---it is $+1$ if the Frenet-Serret normal vector equals the inward pointing normal to $\p\circ \e (\bar\K)$ at $ \p\circ \e (x)$, and $-1$ otherwise.

With this convention it is a result of classical differential geometry that $$\int_\alpha k_g ds = \int_{\p\circ \e (\alpha)} k_s ds = 2\pi \cdot t_p(\p\circ\e(\alpha)) .$$
A curve $\gamma\in\partial\bar{K}$ is piecewise smooth with this metric. It fails to be smooth at the corners of $\gamma$, and at such a corner $x$, the external angle is $\theta(\p\circ\e(x))=\angle_K(x)$. It follows for a boundary curve with corners, $\gamma \subset \partial \bar\K$, with smooth arcs $\{\alpha_i\}_{i \in \mathscr{I}}$ and corners $\{ x_i \}_{i \in \mathscr{I}}$,
\begin{equation}
    \begin{split}
        \int_\gamma k_g \ ds + \sum_{i \in \mathscr{I}} \theta(x_i) & = \sum_{i \in \mathscr{I}} \left[ \int_{\p\circ \e (\alpha_i)} k_s ds + \theta(\p\circ \e (x_i)) \right]\\
      \   & = \sum_{i \in \mathscr{I}} [ 2 \pi\cdot t_p(\p\circ\e(\alpha_i)) + \angle_K(x_i) ] \\
      \  & = 2\pi \sum_{i \in \mathscr{I}} \left[t_p(\p\circ\e(\alpha_i)) + \frac{\angle_K(x_i)}{2\pi} \right] \\
      \ & = 2 \pi \cdot t_\K(\gamma).
    \end{split}
\end{equation}

Then setting ${\bf K} \equiv 0 $, it follows from Equ.~\ref{equation: Gauss-Bonnet} that
\begin{equation}
    \begin{split}
        \int_{\partial \bar\K} k_g \ ds + \sum_{i \in \mathscr{I}} \theta(x_i) & =  2 \pi \sum_{\gamma \in \partial \bar\K} t_\K(\gamma) \\
        \ & = 2 \pi \cdot \chi(\bar\K).
    \end{split}
\end{equation}
The equality of our proposition then follows.
\end{proof}

\begin{remark}
\label{remark: GB for barK in terms of hatalpha}
Following the alternative method of computing $t_\K(\gamma)$ described in \S\ref{subsection: weighting curves}, can restate Equ.~\ref{equation: GB for barK}
as
\begin{equation}
    \label{equation: GB for barK in terms of hatalpha}
\sum_{\gamma \in \partial \bar\K} t(\p\circ\e(\hat{\gamma})) = \chi(\bar\K).
\end{equation}
\end{remark}

We are now situated to establish Equ.~\ref{Eq: Gauss-Bonnet 1} mentioned in \S\ref{Section:Introduction}.

\begin{theorem}
\label{Theorem: eulerturning}
Let $\e : S_g \into \bR^2 \times \bR$ be a regular embedding with crease set $\cC$.
Let $\{ \K_1, \ldots , \K_l \} \subset S_g$ be the collection of all the connected subsurface components of $S_g \setminus \cC$.
Then

\begin{equation}
    \label{equation: GB for total surface}
    \sum_{1\le i \le l}\sum_{\gamma\in\partial\bar{K}_i} t_{K_i}(\gamma) = \chi(S_g).
\end{equation}
Moreover, when every component of $\cC$ is without corners then Equ.~\ref{equation: GB for total surface} implies the following equality.
\begin{equation}
    \label{equation: GB for total surface with no corners}
   2 \sum_{\gamma \in \cC} t(\gamma) = \chi(S_g).
\end{equation}
In particular, when $g=0$ and $\cC$ has no corners, twice the sum of all the turning numbers of s.c.c.'s of $\cC$ is equal to $2$.
\end{theorem}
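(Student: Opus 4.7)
The plan is to deduce Theorem~\ref{Theorem: eulerturning} directly from Proposition~\ref{proposition: GB for barK} by summing over all components of $S_g \setminus \cC$ and exploiting the additivity of Euler characteristic.

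First, I would apply Proposition~\ref{proposition: GB for barK} to each $\bar{\K}_i$ separately, obtaining
\begin{equation*}
\sum_{\gamma \in \partial \bar{\K}_i} t_{\K_i}(\gamma) \;=\; \chi(\bar{\K}_i)
\end{equation*}
for every $i \in \{1,\ldots,l\}$. Summing over $i$ gives the left-hand side of Equ.~\ref{equation: GB for total surface} verbatim, so what remains is to show that $\sum_i \chi(\bar{\K}_i) = \chi(S_g)$. For this, I would use that $S_g = \bigcup_i \bar{\K}_i$, with the pieces meeting precisely along $\cC$, and that $\cC$ is (topologically) a disjoint union of circles; the finitely many corner points do not affect its homotopy type as a $1$-complex, so $\chi(\cC) = 0$. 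By inclusion-exclusion (or a Mayer--Vietoris argument applied to a closed regular neighborhood of $\cC$ and the complementary pieces), $\chi(S_g) = \sum_i \chi(\bar{\K}_i) - \chi(\cC) = \sum_i \chi(\bar{\K}_i)$, which gives Equ.~\ref{equation: GB for total surface}.

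Next I would deduce Equ.~\ref{equation: GB for total surface with no corners} by bookkeeping how each curve $\gamma \in \cC$ is counted in the double sum. Each component $\gamma \subset \cC$ has a tubular annular neighborhood whose two boundary circles lie in two (possibly coinciding abstractly, but distinct as boundary components of the cut surface) pieces $\bar{\K}, \bar{\K}'$; thus $\gamma$ contributes once to $\partial \bar{\K}_i$ for exactly two values of $i$ counted with multiplicity. Now, assuming $\cC$ has no corners, the definition in \S\ref{subsection: weighting curves} specializes to $t_{\K}(\gamma) = t(\p\circ\e(\gamma))$, which depends only on the orientation of $\gamma$; as observed in \S\ref{subsection: corners}, the orientations induced on $\gamma$ from the two adjacent pieces coincide, so $t_{\K}(\gamma) = t_{\K'}(\gamma) = t(\gamma)$. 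Consequently each $\gamma \in \cC$ contributes $2\,t(\gamma)$ to the double sum, and Equ.~\ref{equation: GB for total surface} becomes $2\sum_{\gamma \in \cC} t(\gamma) = \chi(S_g)$. The final assertion (the $g=0$ case) is immediate by substituting $\chi(S^2) = 2$.

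The main obstacle, if any, is the Euler characteristic additivity over the decomposition along $\cC$; one must be careful because individual crease curves may be non-separating in $S_g$, so the combinatorics of ``which piece is adjacent to which side of $\gamma$'' is slightly subtle. The cleanest way around this is to note that the double sum in Equ.~\ref{equation: GB for total surface} is indexed by boundary components of the $\bar{\K}_i$, which are in natural bijection with the two sides of the tubular neighborhood of each $\gamma$; this bijection is what justifies the factor $2$ in Equ.~\ref{equation: GB for total surface with no corners} regardless of whether $\gamma$ separates. Beyond this observation, the argument is a direct assembly of Proposition~\ref{proposition: GB for barK} across the decomposition.
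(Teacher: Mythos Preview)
Your proposal is correct and follows essentially the same route as the paper: apply Proposition~\ref{proposition: GB for barK} to each $\bar\K_i$, sum, and invoke additivity of Euler characteristic along the decomposition by $\cC$; then observe that in the corner-free case each $\gamma$ appears as a boundary component of exactly two pieces with the same turning number. The only cosmetic difference is that the paper states the additivity as the classical gluing fact $\chi(F_1\cup_f F_2)=\chi(F_1)+\chi(F_2)$, whereas you phrase it via inclusion--exclusion using $\chi(\cC)=0$; your extra remark about non-separating curves (irrelevant for $S^2$ but pertinent for $g\ge 1$) is a useful clarification the paper omits.
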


\begin{proof}

From classical topology it is a well known fact that the Euler characteristic is additive in the following sense.  Let $F_1$ and $F_2$ be two orientable compact surfaces with boundary.  Let $A \subset \partial F_1$ and $B \subset \partial F_2$ be two collections of boundary components such that $\abs{A} = \abs{B}$.  For any homeomorphism, $f : A \into B$, consider the surface, $F_1 \cup_f F_2$, obtained by gluing $F_1$ to $F_2$ via $f$.  Then $\chi(F_1 \cup_f F_2) = \chi(F_1) + \chi(F_2)$.

Applying this fact to $S_g$, we have the following sequence of equalities:

\begin{equation}
    \begin{split}
        \chi(S_g) & = \chi(\bar\K_1) + \cdots + \chi(\bar\K_l) \\
        \ &= \sum_{\gamma \in \partial \bar\K_1} t_{\K_1}(\gamma) +
     \cdots + \sum_{\gamma \in \partial \bar\K_l} t_{\K_l}(\gamma) \\
     \ & = \sum_{1\le i \le l}\sum_{\gamma\in\partial\bar{K}_i} t_{K_i}(\gamma) = \chi(S_g).
    \end{split}
\end{equation}
Our first equality then follows.

The claimed equality for the situation where there are no corners follows from the observation that each $\gamma\in\cC$ is a boundary curve of two $K_i$'s, and $\gamma$ has the same Gauss-Bonnet weight for each.
\end{proof}

We include the special case of the sphere in our statement since we will be restricting to that case alone for the remainder of the paper.
 
\subsection{Proof of Theorem~\ref{Theorem: weighting uniqueness}.}
\label{subsection: proof of unique weights}
 
The claim of the theorem can be restated as follows. Given a pair $(S^2, \cC)$ that can be geometrically realized by some regular embedding, there is a unique turning number function ${\bf T}: \cC \into \bZ^2$, that is,  ${\bf T}$ is independent of embedding.
 
We have three key observations that will be combined to establish this result.  The first observation is that every s.c.c.\ on $S^2$ is a separating curve. 
 
The second observation follows from the first, we can determine one of the Gauss-Bonnet weights of a component, $\gamma \in \cC$, that is innermost on $S^2$.  That is, there will always exist a connected subsurface component with boundary, $\bar\K \subset S^2$, that is homeomorphic to a $2$-disk.  With $\gamma = \partial \bar\K$ we will have
 $$+1 = \chi(\bar\K) = t_\K(\gamma),$$
 and thus one of $\gamma {\rm 's}$ Gauss-Bonnet weights.

The third observation is that if we can determine one of the two Gauss-Bonnet weights for a s.c.c., $\gamma \in \cC$, then we can determine the other.  To see this, let $\bar\K, \bar\K^\prime$ be two connected subsurface components with boundary that share a common boundary curve.  That is, we have a curve with corners $\bar\K \cap \bar\K^\prime = \gamma \in \cC$.  
We observe that if we know the value of $t_\K$, we can determine the value of $t_{\K^\prime}$, since a $\frac{\pi}{2}$ corner in $\bar\K$ (respectively, $\frac{3 \pi}{2}$ corner) becomes a $\frac{3 \pi}{2}$ corner (respectively, $\frac{\pi}{2}$ corner) in $\bar\K^\prime$, while the partial turning numbers of smooth arcs remain the same.

Now combining these three observations, suppose there is a component of $\cC$ for which ${\bf T}$ is undetermined.  Again, let $\{ \bar\K_1, \ldots , \bar\K_l \}$ be the collection of all connected subsurface components coming from $S^2 \setminus \cC$.  Suppose there is a subsurface, $\bar\K_i$, such that $t_{\K_i}$ is undetermined for one of its boundary components.  If such a component exists then we can choose $\bar\K_i$ such that it has only one boundary component with $t_{\K_i}$ undetermined.  But, using Equ.~\ref{equation: GB for barK}, we can solve for the ``undetermined'' Gauss-Bonnet weight associated to $\bar\K_i$---we will know $\chi(\bar\K_i)$ and all but one of the associated Gauss-Bonnet weights.

The theorem then follows.

\subsection{Proof of Theorem~\ref{Theorem: configuration finiteness}.}
 
Once the number of components and corners of $\cC$ is bounded by $n_t$ and $n_c$, respectively, the finiteness claim follows from topological bookkeeping arguments.

First, given a collection of disjoint s.c.c.'s, $\cC \subset S^2$, we can naturally associate to it a graph.  Each vertex of the graph will correspond to a connected component of $S^2 \setminus \cC$ and the edges of the graph will correspond to the components of $\cC$---two vertices associated to two subsurface components of $S^2 \setminus \cC$ share a common edge if the curve of $\cC$ associated to that edge is a common boundary curve of the two subsurfaces.  Since every s.c.c.\ in $S^2$ is separating the graph associated with a pair $(S^2 , \cC)$ will be a tree.  It is well known that the number of distinct tree-like graphs having at most $n_t$ edges is a bounded set. (See \cite{[C]}.)

To take into account corners of $\cC$, note that given a collection of disjoint s.c.c.'s, $\cC \subset S^2$, there is only a finite number of ways to distribute no more than $n_c$ marked points along the curves of $\cC$.



\section{Constructing embeddings of $S^2$}
\label{section: constructing embeddings}

In this section we revisit question (2) of \S\ref{Section:Introduction}: \emph{given a collection of s.c.c.'s $\cC \subset S^2$, when is $\cC$ realized as the crease set of a regular embedding into $\bR^2 \times \bR$}?  We will answer this question in the simplified case where the crease set has no corners.  From Theorem~\ref{Theorem: weighting uniqueness} we know that if such an embedding exists, the turning number function, ${\bf T}$, decorating the pair $(S^2,\cC)$ is uniquely determined.  Since such a ${\bf T}$ is totally determined by the topological information of the components of $S^2 \setminus \cC$, one can readily decide whether such a function exists.

Specifically, let $(S^2 , \cC , {\bf T})$ be a triple satisfying the following ($\ast$) conditions:
\begin{itemize}
    \item[($\ast$-i)] $\cC \subset S^2$ is a collection of pairwise disjoint smooth s.c.c.'s.
    \item[($\ast$-ii)] ${\bf T} : \cC \into \mathbb{Z}$ such that for each connected surface $\K \subset S^2 \setminus \cC$, the associated surface with boundary, $\bar\K$, satisfies the relationship $$\sum_{\gamma \in \partial \bar\K} {\bf T}(\gamma) = \chi(\bar\K).$$
\end{itemize}
With the above statement of ($\ast$) in mind, we now give an equivalent restatement of Theorem~\ref{Theorem: embedding existence}, which we will proceed to prove in the remainder of \S\ref{section: constructing embeddings}.

\begin{theorem}
\label{theorem: realization of crease set}
Let $(S^2, \cC , {\bf T})$ be a triple satisfying {\rm($\ast$)}.  Then there exists a regular embedding, $\e : S^2 \into \bR^2 \times \bR$, such that $\cC$ corresponds to the crease set of $\e$ and ${\bf T}$ corresponds to the turning number function associated with $\e$.  In particular, the crease set has no corners.
\end{theorem}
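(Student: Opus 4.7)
My plan is to prove Theorem~\ref{theorem: realization of crease set} by induction on $|\cC|$, exploiting the tree structure of $(S^2,\cC)$: since every s.c.c.\ in $S^2$ separates, the components of $S^2\setminus\cC$ and the curves of $\cC$ form a tree $T$ whose vertices are subsurfaces and whose edges are crease curves. The base case is $|\cC|=1$, in which $(\ast\text{-ii})$ forces both subsurfaces to be disks with ${\bf T}(\gamma)=1$; this is realized by the standard round unit sphere embedded in $\bR^3$.

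For the inductive step, given a triple $(S^2,\cC,{\bf T})$ with $|\cC|=n\geq 2$ satisfying $(\ast)$, I would locate a reducible configuration: a leaf disk $D$ of $T$ together with an adjacent annular subsurface $A$. Let $\gamma_0$ be the crease curve shared by $D$ and $A$, and $\gamma_1$ the other boundary curve of $A$. Applying $(\ast\text{-ii})$ to $D$ and $A$ forces ${\bf T}(\gamma_0)=1$ and ${\bf T}(\gamma_1)=-1$. I would reduce by removing $D\cup\gamma_0\cup A\cup\gamma_1$ and absorbing it smoothly into the adjacent subsurface $K$ across $\gamma_1$, yielding a new triple $(S^2,\cC',{\bf T}')$ with $\cC'=\cC\setminus\{\gamma_0,\gamma_1\}$. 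The inclusion-exclusion identity $\chi(K\cup A\cup D)=\chi(K)+1$, combined with $t_K(\gamma_1)=-1$, shows that the reduced triple still satisfies $(\ast)$. By the inductive hypothesis a regular embedding $\tilde\e$ realizes the reduced triple; I would then produce $\e$ by modifying $\tilde\e$ locally in the absorbed region, pressing a small nested dimple into the surface to create two concentric smooth crease circles with turning numbers $1$ and $-1$. This dimple construction can be carried out so that the projected crease curves have nowhere-vanishing curvature, yielding the aesthetic form described in \S\ref{Section:Introduction}.

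The main obstacle will be handling the case when no leaf of $T$ is adjacent to an annulus---for example when every leaf disk is adjacent directly to a pair-of-pants or higher-degree subsurface. In such cases I expect a more elaborate reduction is needed: simultaneously collapse several leaves meeting at a common non-annular vertex together with that vertex, and replace the collapsed region with a single smooth disk. The GB compatibility of this multi-leaf collapse follows from a local Euler characteristic and turning number count at the vertex, which is rigidly controlled by Theorem~\ref{Theorem: weighting uniqueness}. A secondary concern is that the reintroduced crease curves may have non-$\pm 1$ turning numbers in the general setting, so the local dimple construction must generalize to nested convex closed curves of arbitrary integer turning number; these can be built as projections of appropriately corrugated annular bands threaded into the surface in place of simple dimples.
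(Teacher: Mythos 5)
Your inductive reduction has a genuine gap that you partially anticipate but do not resolve. The basic reduction---removing a leaf disk $D$, the adjacent annulus $A$, and merging into the next component $K$---does preserve the Gauss-Bonnet condition (indeed $\chi(K')=\chi(K)+1$ while exactly one boundary curve with $t_K(\gamma_1)=-1$ is removed, so the sums match). However, a Gauss-Bonnet-weightable configuration on $S^2$ need \emph{not} contain any leaf adjacent to a degree-2 vertex, and your proposed fallback does not preserve the Gauss-Bonnet condition. A concrete counterexample to the existence of a usable leaf-plus-annulus: take two pair-of-pants components $P_1$ and $P_2$ glued along a single boundary curve $\gamma$, with each remaining boundary of $P_1$ and $P_2$ capped by a disk. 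This gives five crease curves on $S^2$, admits the Gauss-Bonnet weighting $t(\gamma)=-3$ with each disk-bounding curve carrying $+1$, and has \emph{no} degree-2 vertex at all. Now apply your proposed multi-leaf collapse: replacing $D_1\cup P_1\cup D_2$ by a disk $\hat{D}$ forces $t_{\hat{D}}(\gamma)=+1$ by the uniqueness Theorem~\ref{Theorem: weighting uniqueness}, but from the $P_2$ side the weight must remain $-3$, so the reduced configuration violates the Gauss-Bonnet equations. Contrary to your assertion, Theorem~\ref{Theorem: weighting uniqueness} shows the weights are globally rigid, which is precisely why local collapses of this type generically \emph{break} the Gauss-Bonnet condition rather than preserving it. Any partial collapse (e.g.\ capping only $D_1\cup P_1$ to form an annulus) produces the same mismatch.

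The paper avoids induction entirely and constructs the embedding globally. It realizes each planar component $\bar\K$ of $S^2\setminus\cC$ as an embedded ``disk-with-holes'' in a distinct horizontal slice $\bR^2\times\{z_{v_\K}\}$, with heights dictated by a planar embedding of the dual tree (the LTG of \S\ref{subsubsection: LTG}) satisfying the $(\star)$ conditions. Arbitrary turning numbers are achieved \emph{before} gluing, by choosing ``twisting arcs'' within each $\bar\K$ whose endpoints carry $\pm2$ labels compatible across adjacent components (the ``consistent labeling'' via the black-square tree decomposition, \S\ref{subsubsection: consistent labeling}). Only after stacking and gluing does the twisting operation of Fig.~\ref{Fig: turning twist} get applied simultaneously along the full edge-paths of arcs, producing the prescribed Gauss-Bonnet weights at once. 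This sidesteps the local rigidity that obstructs your reduction. Your intuition about ``corrugated annular bands'' for realizing nonstandard turning numbers is essentially the paper's twisting operation, but it must be applied coherently across the whole tree, not incrementally to a collapsed region.
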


Our constructive argument has two steps.  First, we develop an understanding of representing embeddings of flat planar surfaces in $\bR^2 \times \bR$.  Second, we show how these representations can be ``stacked'' together to construct the needed embedding of $S^2$.

\subsection{Constructing planar surfaces $\bar\K$}
\label{subsection: planar surfaces}

We start with the following definition.

\begin{definition}
An \emph{$n$-turning weight}, $W$, is an $(n+1)$-tuple of odd integers, $W_s =(t_0 , \ldots , t_n )$, such that $$\sum_{i=0}^n t_i = 1-n.$$
The \emph{standard} $n$-turning weight set is 
$$W_s = (1, \underbrace{-1, -1, \ldots , -1}_{n}).$$
A \emph{basis} for an $n$-turning weight, $W = \{t_0 , \ldots , t_n \}$, is a collection of integers, $\{k_0, \ldots, k_n \}$, such that:
\begin{itemize}
    \item[1.] $t_0 = 1 + 2 k_0$,
    \item[2.] $t_i = -1 + 2 k_i, \ {\rm for} \ 1\leq i \leq n.$
    \item[3.] Necessarily, $$\sum_{i=0}^n k_i = 0.$$
\end{itemize}
\end{definition}

Let $\bar\K_{1-n}$ be a smooth compact planar surface having Euler characteristic equal to $1-n$.
Let $W = (t_0 , \ldots , t_n)$ be an $n$-turning weight.
We say an embedding $\e : \bar\K_{1-n} \into \bR^2 \times \bR$ is a \emph{geometric realization of $W$} if:
\begin{itemize}
    \item[i.] every point of ${\rm int}\,\bar\K_{1-n}$ is a regular point.  Thus, the pull-back of $\p \circ \e (\bar\K_{1-n})$ gives a flat structure to $\bar\K_{1-n}$.
    \item[ii.] There is an enumeration of the boundary components, $\partial \bar\K_{1-n} = \{ \gamma_0 , \gamma_1, \ldots , \gamma_n \}$, such that we have $t(\gamma_i) = t_i$ for $ 0 \leq i \leq n $.  That is, the turning numbers of the components of $\partial \bar\K_{1-n}$ realize $W$.
\end{itemize}

Consider a closed disk in $\bR^2 \times \{ {\rm pt.} \}$.  Delete from it $n$ open subdisks with disjoint closures. The resulting ``disk with $n$ holes'' will be an embedding of a $\bar\K_{1-n}$ that is a geometric realization of the standard $n$-turning weight, $W_s$.
We will use the notation $\e_s : \bar\K_{1-n} \into \bR^2 \times \{ {\rm pt.} \}$ for this \emph{standard embedding} of a planar surface of Euler characteristic $1-n$.  Observe that we can easily require that the Frenet-Serret curvature be nowhere zero on $\partial \e_s(\bar\K_{1-n})$.

We now give a procedure for taking the standard embedding, $\e_s(\bar\K_{1-n})$, and altering it so that we obtain a geometric realization of any chosen $n$-turning weight, $W$.  Let $\{ \gamma_0 , \ldots , \gamma_n \}$ be a fixed enumerated labeling of the boundary components $\partial \bar\K$.  Let $\{ k_o , \ldots, k_n \}$ be a basis for $W$.  For each boundary component, $\gamma_i$, we chose $|k_i|$ distinct points, $p_i^1, \ldots , p_i^{|k_i|} \subset \gamma_i$, and label them ``$+2$'' (respectively, ``$-2$'') when $k_i > 0$ (respectively, when $k_i < 0$).  (To keep down the clutter in our figures, we will use a dot, $ \bullet $, for $+2$ label and a circle, $\circ$, for $-2$ label.)  Note that all the labels on a particular boundary component are the same.  See the initial illustration in the sequence in Fig.~\ref{Fig: turning twist}.

Next we choose a collection of properly embedded pairwise disjoint arcs, $\{a_j\} \subset \bar\K_{1-n}$ such that each arc has one endpoint labeled $-2$ and the other endpoint labeled $+2$.  Thus, each arc has a natural orientation from its negative to positive boundary endpoints.  We will refer to such properly embedded oriented arcs as \emph{twisting arcs of $\bar\K$ associated with $W$}.

Three observations are warranted.  First, we must have $$l = \frac{1}{2} \sum_{i=0}^n |k_i|$$ many arcs. Second, since the labels on a particular boundary component are homogeneous, the two endpoints of any twisting arc are on different boundary components.  Finally, there is a great deal of choice in the embedding of the twisting arcs.  Although it is not necessary, for convenience of description we will assume that these arcs are linear intervals.  See the starting illustration in the sequence of Fig.~\ref{Fig: turning twist}.

\begin{figure}[ht]
\centering{\includegraphics[width=\linewidth]{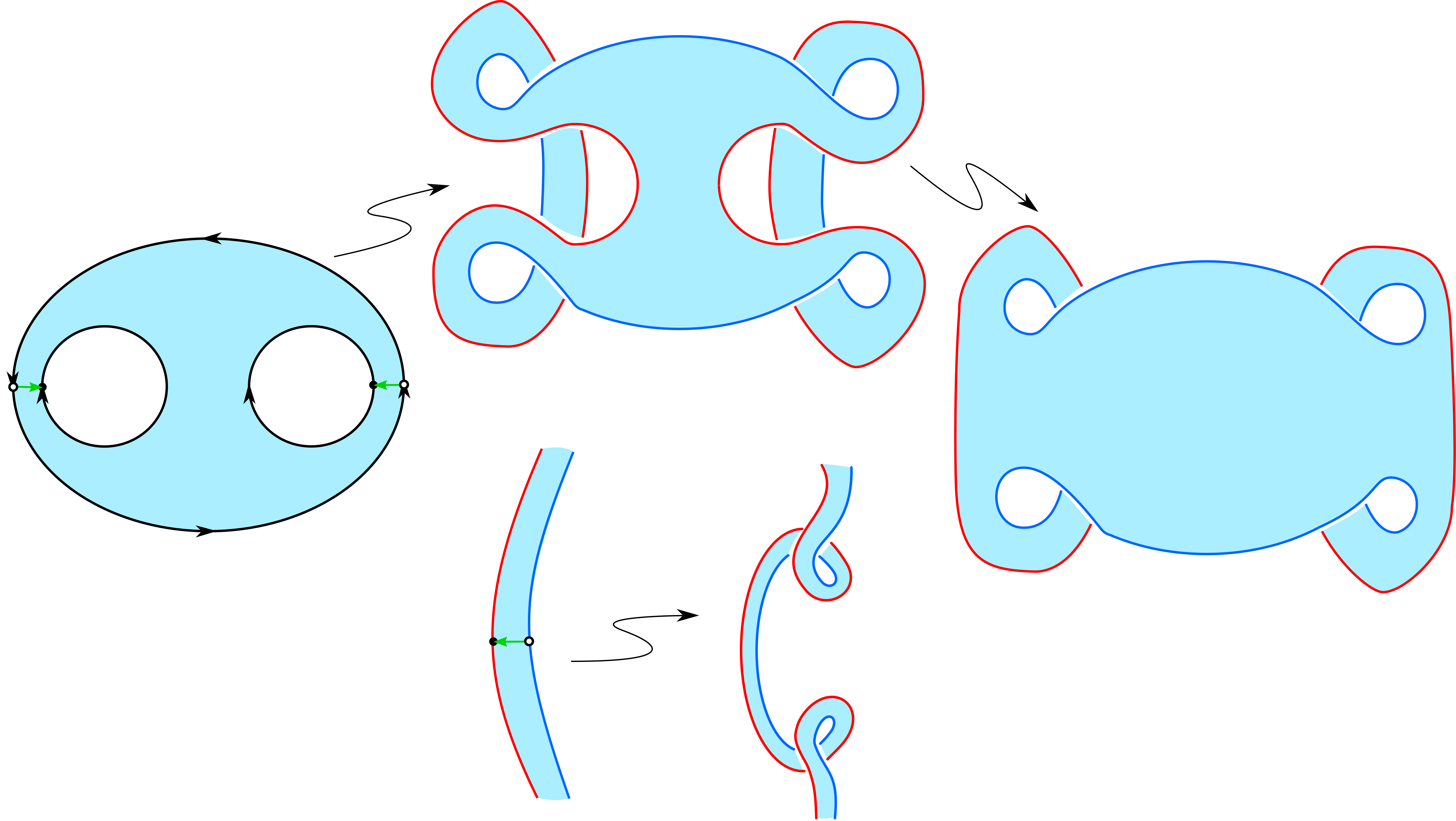}}
\caption{The top sequence illustrates how the twisting operation in a neighborhood of the two oriented arcs takes a planar surface that realizes the standard $2$-turning weight set, $(1, -1, -1)$, to a realization of the $2$-turning weight set, $(-3, 1 , 1)$.  This top sequence continues with an isotopy of the resultant boundary so that $k_s \neq 0$ at every point on the {\red red boundary curves}.  The bottom sequence illustrates our twisting operation in a neighborhood of an oriented arc.}
\label{Fig: turning twist}
\end{figure}

Next, given a choice of of twisting arcs, $\{a_j\}_{1 \leq j \leq l} \subset \bar\K$, in a standard embedded planar surface  we can perform the \emph{twisting operation} illustrated in Fig.~\ref{Fig: turning twist} in a neighborhood of each $a_j$.  The resulting planar surface embedding will be a geometric realization of the associated $n$-turning weight set, $W$.  The top sequence in Fig.~\ref{Fig: turning twist} illustrates this procedure for the taking the standard embedding of a disk-minus-two-holes and obtaining an embedding associated with the $2$-weight set, $(-3, 1 ,1)$.

The reader should observe that the basis used for the $(-3, 1,1)$ geometric realization is $(k_0 ,k_1 , k_2) = (-4, 1, 1)$.  However, one could also utilize the basis $(k_0 ,k_1 , k_2) = (0, -1, 1)$.  Utilizing this ``change of basis'' one will have a single properly embedded arc whose endpoints are on the two $-1$ boundaries of starting planar surface in the top-sequence of Fig.~\ref{Fig: turning twist}.  The $2$-turning weight then becomes $(1,-3,1)$ or $(1,1,-3)$, depending on the ordering of the boundary curves.

The above discussion is captured by the following proposition.

\begin{proposition}
\label{proposition: geometric realization of planar surfaces}
For any basis, $\{k_0, \ldots , k_n \} $, associated with a $n$-turning weight set, $W$, there exists an embedding, $\e : \bar\K_{1-n} \into \bR^2 \times \bR$, that is a geometric realization of $W$ in a manner that respects the basis.  That is, the turning number of the $\gamma_i$ boundary curve of $\bar\K_{1-n}$ is $1+ 2k_0$ when $i=0$ and $-1 + 2k_i$ when $1 \leq i \leq n$.
\end{proposition}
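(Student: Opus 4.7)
The plan is to verify that the construction described in the paragraphs preceding the statement actually delivers an embedding meeting both defining conditions of a geometric realization of $W$. The verification splits naturally into three steps: feasibility of the labelled arc collection, a local turning-number computation for a single twist, and aggregation together with a check of interior regularity.

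First I would establish that the required twisting arcs exist. The total number of $+2$ endpoints is $\sum_{k_i>0} k_i$ and the total number of $-2$ endpoints is $-\sum_{k_i<0} k_i$; the basis identity $\sum_{i=0}^n k_i = 0$ makes these counts equal, so a perfect matching between $-2$ and $+2$ endpoints exists. Because all labels on any single boundary component $\gamma_i$ share a common sign, every matching pair joins two distinct boundary components. Since $\bar{\K}_{1-n}$ is planar, $l = \tfrac{1}{2}\sum_i |k_i|$ pairwise disjoint properly embedded arcs realizing this matching always exist; one can, for instance, isotope $\e_s(\bar{\K}_{1-n})$ to a convex disk with convexly arranged holes and then join paired labels by disjoint Euclidean chords.

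The central step, and the one I expect to be the main obstacle, is the local turning-number calculation for a single twist. Fix a twisting arc $a_j$ from $p \in \gamma_i$ (labelled $-2$) to $q \in \gamma_{i'}$ (labelled $+2$), and consider the effect of the twisting operation (bottom row of Fig.~\ref{Fig: turning twist}), supported in an arbitrarily small neighborhood $U$ of $a_j$. Outside $U$ the embedding is unchanged, so the partial turning numbers along the unaltered arcs of $\gamma_i$ and $\gamma_{i'}$ contribute identically before and after. Inside $U$, direct inspection of the local picture shows that the projected tangent vector of $\gamma_{i'}$ acquires additional net counterclockwise rotation of $4\pi$, contributing $+2$ to $t(\gamma_{i'})$, while that of $\gamma_i$ acquires additional net clockwise rotation of $4\pi$, contributing $-2$ to $t(\gamma_i)$. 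No other boundary component is affected by this local modification. The factor of $2$ in the basis definition is built precisely to match this per-twist contribution of $\pm 2$.

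Summing contributions across all $l$ twists, $t(\gamma_i)$ acquires $+2$ for each $+2$ label and $-2$ for each $-2$ label placed on it, for a net shift of $2k_i$. Starting from the standard turning numbers $(1, -1, \ldots, -1)$ of $\e_s$ then yields $t(\gamma_0) = 1 + 2k_0$ and $t(\gamma_i) = -1 + 2k_i$ for $1 \le i \le n$, as required by $W$. For interior regularity, each twist realises its local modification by lifting a planar immersion to a smooth embedding of $\bar{\K}_{1-n}$ into $\bR^2 \times \bR$ via a small vertical displacement of the two sheets near $a_j$, in the same spirit as the crossing-ball construction of \S\ref{section: proof of crease set in link projections}; hence $\p \circ \e$ remains a flat immersion on the interior of $\bar{\K}_{1-n}$ and no critical points appear there. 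A small final smoothing near each twist achieves the nowhere-zero curvature property on the boundary curves indicated by the top sequence of Fig.~\ref{Fig: turning twist}.
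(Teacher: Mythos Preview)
Your proposal is correct and follows essentially the same approach as the paper: the proposition is stated immediately after the construction it summarizes, and the paper offers no separate proof beyond the preceding description of the twisting-arc procedure and Fig.~\ref{Fig: turning twist}. Your write-up simply makes explicit the verifications the paper leaves implicit---the existence of a disjoint arc collection via the basis identity $\sum k_i=0$, the local $\pm 2$ turning-number contribution of a single twist, and the preservation of interior regularity---so there is nothing substantively different to compare.
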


A number of remarks follow. 

\begin{remark}
\label{remark: +1 turning}
The reader should observe that the only possible way a  boundary curve having $+1$ turning number can result from our twisting operation is if the associated basis value is either $k_0 = 0$ or $k_i = 1, \  1 \leq i \leq n$. 
\end{remark}

\begin{remark}
\label{remark: normal position for c_0}
As illustrated in the initial planar surface of the top sequence of \mbox{Fig.~\ref{Fig: turning twist},} we can readily assume that the geometric realization of a standard $n$-turning weighted set has its boundary curves in a ``normal position''.  That is, the curvature $k_s$ is not zero at any point on the boundary.
The reader should observe that the twisting operation preserves this normal position---$k_s \neq 0$---on the $\gamma_0$ boundary curve when the associated basis value $k_0 \geq 0$.
\end{remark}

\begin{remark}
\label{remark: normal position for c_i}
Continuing an analysis of the behavior of the boundary curvature with respect to the twisting operation, for $2 \leq j \leq n$, if we have a basis value, $k_j \leq 0$, then $k_s \neq 0$ on the associated boundary curve after performing the twisting operation. 
\end{remark}

\begin{figure}[ht]
\centering
\includegraphics[width=.7\linewidth]{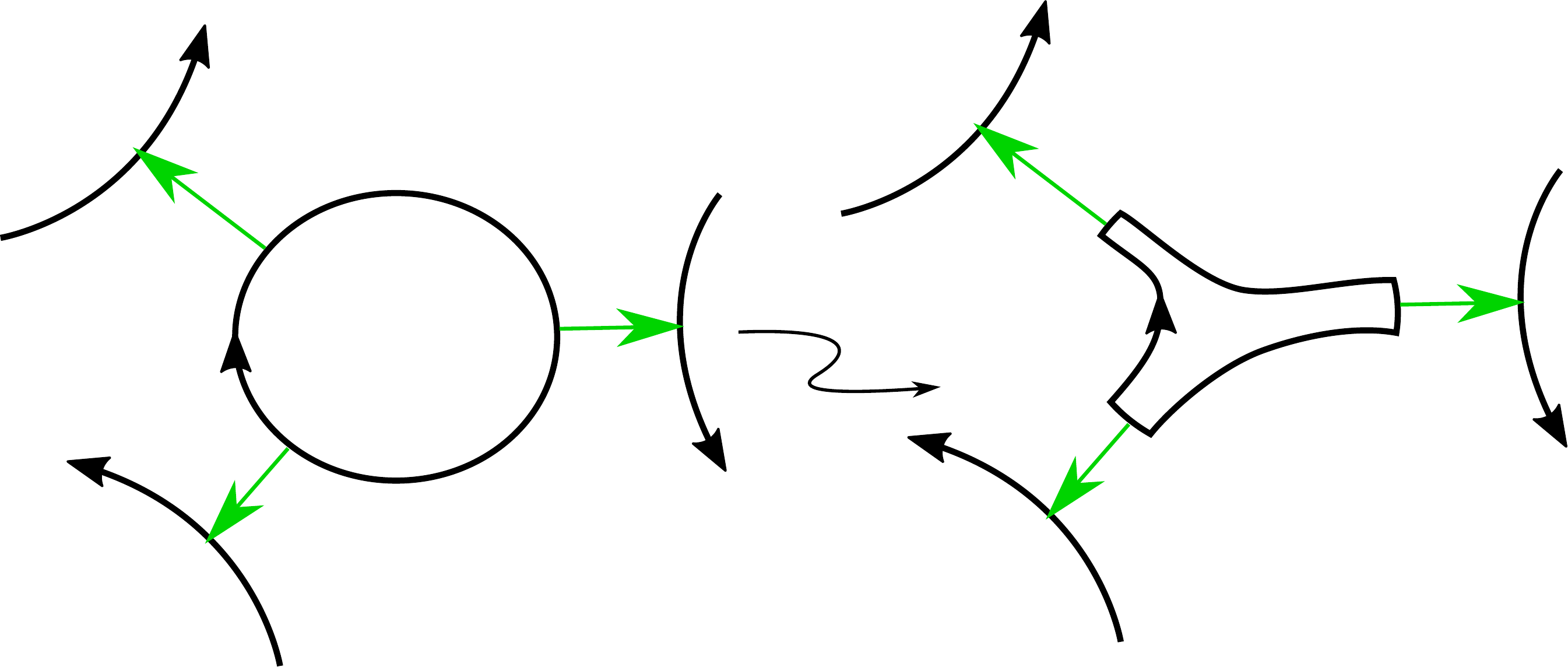}
\caption{A depiction of the isotopy on a boundary curve of a planar component so it will be in normal form once the twist operations are performed. The isotopy fixes a neighborhood of the {\green green} oriented arcs where the twists will be done. In this case, the inner curve has $k_j=-3$, but this generalizes to any $k_j\leq -2$.}
\label{Fig: normal for k_j < -3}
\end{figure}

\begin{remark}
\label{remark: isotopy of c_i to normal position}
For some $ 1 \leq j \leq n$, if we have a basis value $k_j \leq -2$ then we can perform an isotopy that achieves $k_s \neq 0$ on the associated boundary curve after the twisting operation is applied.  (See Fig.~\ref{Fig: normal for k_j < -3}.)
\end{remark}

\begin{figure}[ht]
\centering
\includegraphics[width=.8\linewidth]{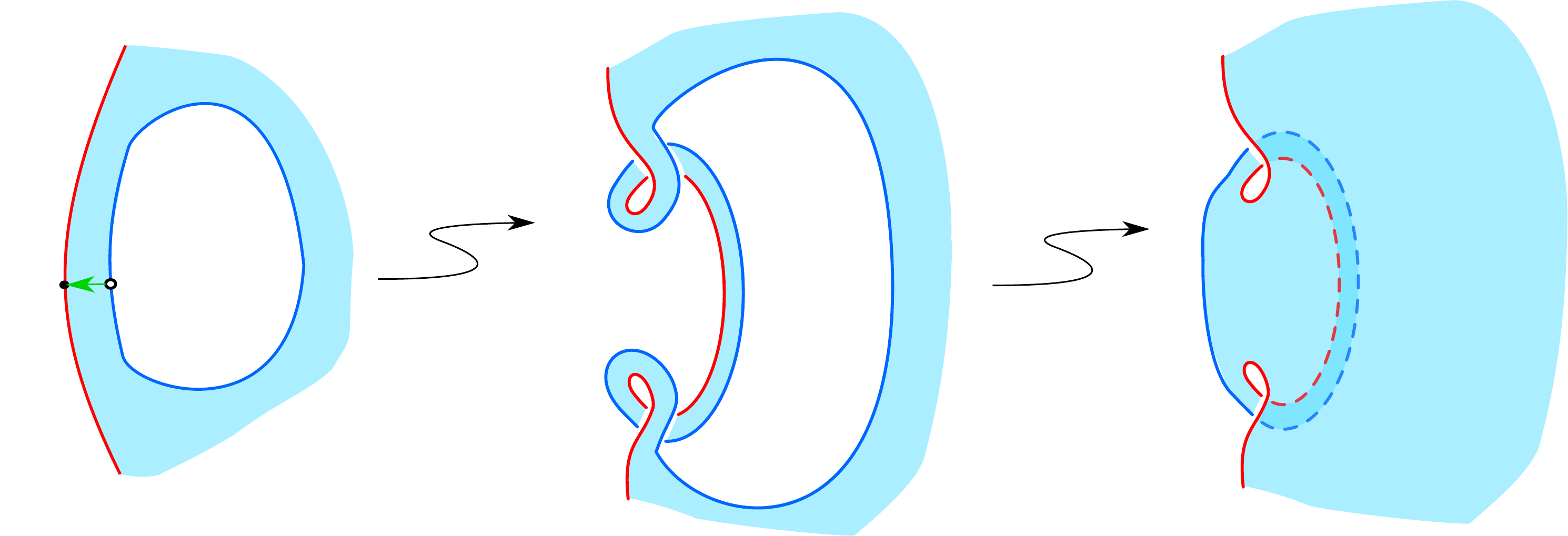}
\caption{The twist operation performed with a $k_j=-1$ curve, followed by an isotopy sliding the top portion of $\bar\K$ to the left, so that that $k_s\neq 0$ along the twisted {\blue blue} boundary curve.}
\label{Fig: k_i= -1 and -2}
\end{figure}

\begin{remark}
\label{remark: cases when k_i = -1 and -2}
To deal with final case for achieving a normal form for the resulting embedding of $\bar\K$, Fig.~\ref{Fig: k_i= -1 and -2} illustrates how the boundary curve initially having $-1$ turning number can achieve a $+1$ turning number after a single twist operation.  This is the case where $k_j = -1$ for some $1 \leq j \leq n$.  The embedding depicted in the middle illustration of the sequence has this $+1$ turning number boundary curve not in normal form since there will be two points where $k_s = 0$.  The right-hand illustration shows this same boundary curve after an isotopy that places it in a position where $k_s >0$.
\end{remark}

\subsection{Constructing embeddings and proof of Theorem~\ref{Theorem: embedding existence}.}
\label{Subsection: proof of embedding existence.}

\subsubsection{Labeled tree graphs.}
\label{subsubsection: LTG}
We assume that we are given, $(S^2 , \cC)$, a $2$-sphere/crease set pair.  That is, $\cC \subset S^2$ is a collection of pairwise disjoint smooth s.c.c.'s which have a Gauss-Bonnet weighting.  We can associate to this pair a \emph{labeled tree graph (LTG)}, $\cG$, as follows:
\begin{itemize}
    \item[1.]  For each connected planar component, $\K \subset S^2 \setminus \cC$, there is a corresponding vertex $v_\K \subset \cG$, with label $\chi(\bar\K)$.
    \item[2.]  Each edge, $e_\gamma \subset \cG$, corresponds to a curve of $\gamma \in \cC$ with the label $t(\gamma)$.
    \item[3.]  An edge, $e_\gamma \subset \cG$, is incident to vertices, $v_\K, v_{\K^\prime} \subset \cG$, if $\gamma = \partial \K \cap \partial \K^\prime$.
\end{itemize}
A LTG being a tree comes from the fact that all s.c.c.'s on a $2$-sphere separate.

\begin{example}[Part 1.]
\label{example: running example}
We offer a running example, $(S^2 , \cC) = (S^2 , \{\gamma_1, \ldots, \gamma_{13} \})$, with: $t(\gamma_i) = 1$, $i \in \{1,2,3,4,10,11,12,13\}$; $t(\gamma_5) = t(\gamma_6) = t(\gamma_8) = t(\gamma_9)= -3$; and $t(\gamma_7)= +5$.  Fig.~\ref{Fig: example 1} illustrates $\cC \subset S^2$ and Fig.~\ref{Fig: example 1 graph} depicts the associated LTG, $\cG$.
\end{example}

\begin{figure}[ht]

\labellist
\small

\pinlabel $\gamma_1$ [tl] at 140 175.73
\pinlabel $\gamma_2$ [tr] at 210 175.73
\pinlabel $\gamma_3$ [tl] at 385 175.73
\pinlabel $\gamma_4$ [tr] at 455 175.73
\pinlabel $\gamma_5$ [bl] at 240 229.58
\pinlabel $\gamma_6$ [br] at 350 229.58
\pinlabel $\gamma_7$ [t] at 493.18 133.22
\pinlabel $\gamma_8$ [tl] at 250 17.01
\pinlabel $\gamma_9$ [tr] at 360 17.01
\pinlabel $\gamma_{10}$ [tl] at 135 40
\pinlabel $\gamma_{11}$ [tr] at 215 40
\pinlabel $\gamma_{12}$ [tl] at 380 40
\pinlabel $\gamma_{13}$ [tr] at 460 40
\pinlabel $+1$ [l] at 90 194.15
\pinlabel $+1$ [l] at 190 194.15
\pinlabel $+1$ [l] at 333 194.15
\pinlabel $+1$ [l] at 433.66 194.15
\pinlabel $+1$ [l] at 90 58
\pinlabel $+1$ [l] at 190 58
\pinlabel $+1$ [l] at 335 58
\pinlabel $+1$ [l] at 435.66 58
\pinlabel $-3$ [tl] at 235 154.47
\pinlabel $-3$ [tr] at 350 154.47
\pinlabel $-3$ [bl] at 250 89.28
\pinlabel $-3$ [br] at 350 89.28
\pinlabel $+5$ [t] at 99.20 133.22

\endlabellist

\centering
\includegraphics[width=.7\linewidth]{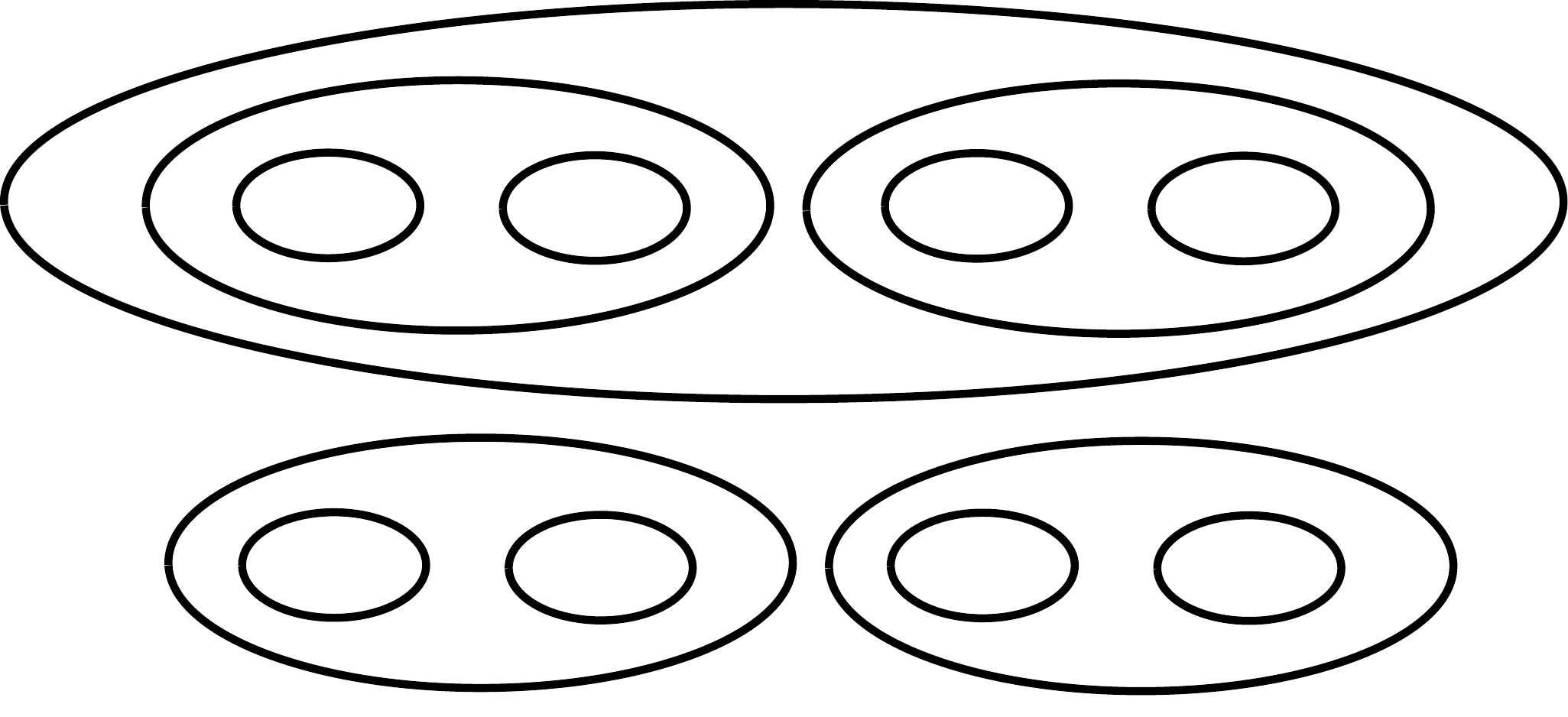}
\caption{A collection of curves on $S^2$ corresponding to $\cC$.  Each curve in $\cC$ is labeled $\gamma_i$ for $1\leq i \leq 13$.  Additionally, each curve is labeled with its determined Gauss-Bonnet turning number weight.}
\label{Fig: example 1}
\end{figure}

\begin{figure}[ht]

\labellist
\tiny

\pinlabel $+1$ [c] at 28.34 928.26
\pinlabel $+1$ [c] at 131.80 928.26
\pinlabel $+1$ [c] at 473.34 928.26
\pinlabel $+1$ [c] at 576.80 928.26
\pinlabel $+1$ [tr] at 75.11 885.74
\pinlabel $+1$ [l] at 144.55 885.74
\pinlabel $-1$ [c] at 144.55 838.98
\pinlabel $-1$ [c] at 589.55 838.98
\pinlabel $+1$ [c] at 259.35 780.87
\pinlabel $+1$ [c] at 704.34 780.87
\pinlabel $+1$ [c] at 359.97 734.10
\pinlabel $+1$ [c] at 807.80 734.10
\pinlabel $+1$ [r] at 243.76 698.67
\pinlabel $+1$ [tl] at 314.62 684.50
\pinlabel $\gamma_3$ [r] at 688.75 698.67
\pinlabel $-1$ [c] at 246.59 641.99
\pinlabel $-1$ [c] at 691.59 641.99
\pinlabel $-3$ [r] at 171.48 573.96
\pinlabel $-3$ [l] at 232.42 573.96
\pinlabel $\gamma_5$ [r] at 616.48 573.96
\pinlabel $-1$ [c] at 201.24 528.61
\pinlabel $-1$ [c] at 646.24 528.61
\pinlabel $+5$ [l] at 205.49 483.26
\pinlabel $-1$ [c] at 201.24 435.08
\pinlabel $-1$ [c] at 646.24 435.08
\pinlabel $-3$ [r] at 155.89 306.11
\pinlabel $-3$ [l] at 226.75 389.73
\pinlabel $\gamma_8$ [r] at 600.89 306.11
\pinlabel $-1$ [c] at 250.84 335.87
\pinlabel $-1$ [c] at 695.84 335.87
\pinlabel $+1$ [r] at 249.43 257.93
\pinlabel $+1$ [bl] at 311.78 283.44
\pinlabel $\gamma_{12}$ [r] at 694.42 257.93
\pinlabel $+1$ [c] at 358.55 232.42
\pinlabel $+1$ [c] at 803.55 232.42
\pinlabel $+1$ [c] at 255.09 179.98
\pinlabel $+1$ [c] at 700.09 179.98
\pinlabel $-1$ [c] at 144.55 133.22
\pinlabel $-1$ [c] at 589.55 133.22
\pinlabel $+1$ [br] at 73.69 86.45
\pinlabel $+1$ [l] at 143.14 80.78
\pinlabel $+1$ [c] at 25.51 28.34
\pinlabel $+1$ [c] at 127.55 28.34
\pinlabel $+1$ [c] at 470.51 28.34
\pinlabel $+1$ [c] at 572.54 28.34
\pinlabel 1 [c] at 430 25.51
\pinlabel 2 [c] at 430 134.63
\pinlabel 3 [c] at 430 178.57
\pinlabel 4 [c] at 430 231.00
\pinlabel 5 [c] at 430 335.87
\pinlabel 6 [c] at 430 437.91
\pinlabel 7 [c] at 430 528.61
\pinlabel 8 [c] at 430 639.15
\pinlabel 9 [c] at 430 729.85
\pinlabel 10 [c] at 430 782.29
\pinlabel 11 [c] at 430 838.98
\pinlabel 12 [c] at 430 933.93

\endlabellist

\centering
\includegraphics[width=.545\linewidth]{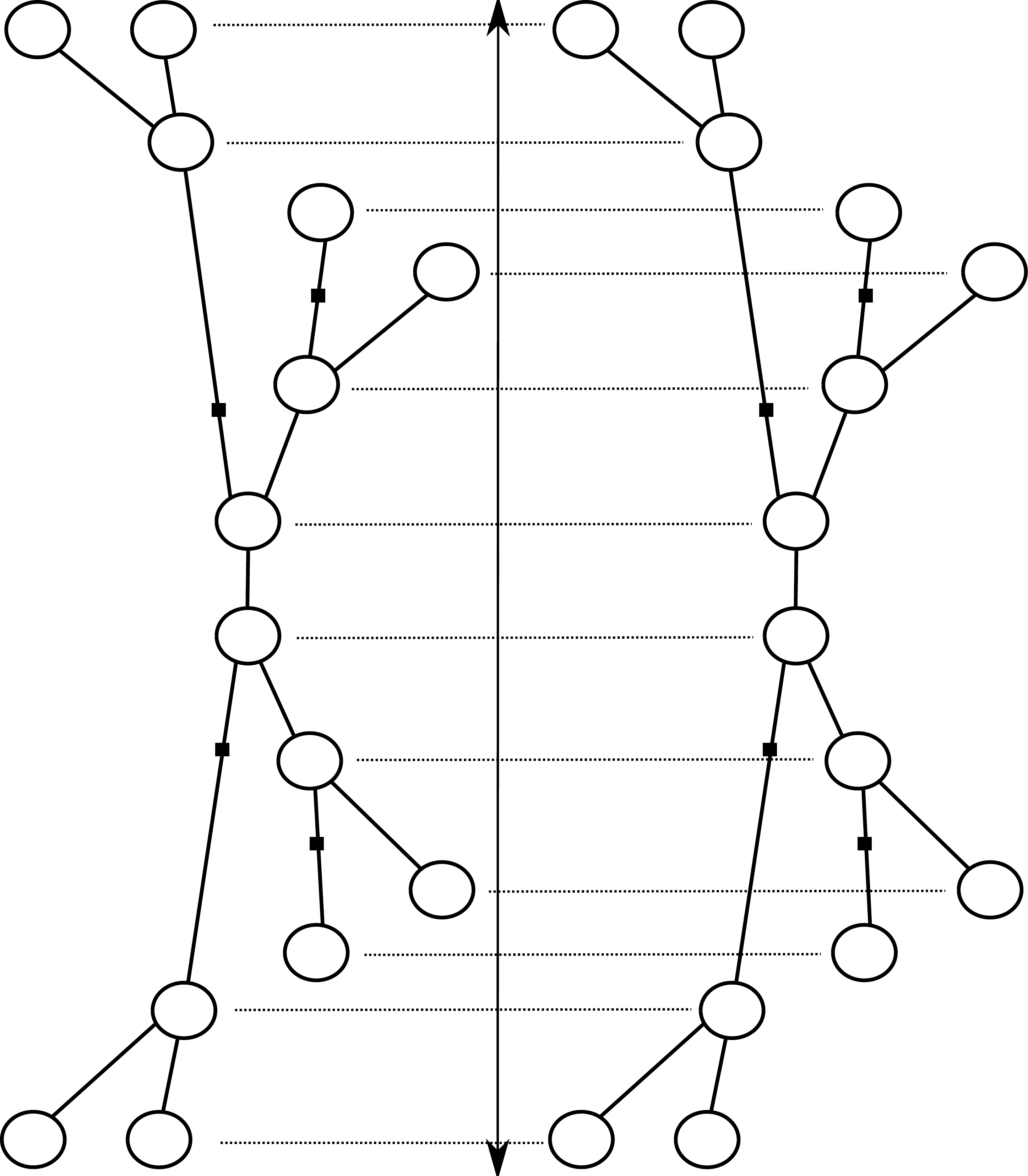}
\caption{The graph $\cG$ associated with Example~\ref{example: running example}.  Labels in the vertices (circles) correspond to Euler characteristic of planar subsurface.  Labels of edges correspond to the turning number of the adjoining curve of $\cC$. The height axis between the two graph depictions gives the height/order for the stacking procedure that is illustrated in Fig.~\ref{Fig: stack example}.}
\label{Fig: example 1 graph}
\end{figure}

For a given $(S^2, \cC)$ configuration we can embed its associated LTG, $\cG \into \bR \times \bR \subset \bR^2 \times \bR$. We use the coordinates $((x,y),z) \in \bR^2 \times \bR$ and $((x,y_0),z) \in \bR \times \bR$, where $y_0$ is an arbitrary, fixed value.  We denote the $z$-height of a vertex $v \subset \cG$ by $z_v \in \bR$, and the $z$-interval support of any edge $e_\gamma \subset \cG$, corresponding to $\gamma \in \cC$, by $I_\gamma \subset \bR$.  As a tree, any LTG can be embedded in $\bR \times \bR$ so as to satisfy the following ($\star$) conditions:
\begin{itemize}
    \item[($\star$)-0] For two distinct vertices, $v , v^\prime \subset \cG$, we have $z_v \neq z_{v^\prime}$.
    \item[($\star$)-1] Each edge is a straight line segment with slope being non-zero, i.e.\ $\left|\frac{dx}{dz}\right| >0$.
    \item[($\star$)-2] Each vertex, $v \in \cG$, having adjacent edges, $\{e_{\gamma_1} , \ldots ,e_{\gamma_p} \}$ for  $\gamma_i \in \cC, \ 1 \leq i \leq p$, satisfies one of the following statements: (a) $\min I_{\gamma_i} \geq z_v $ iff $t(\gamma_i) > 0$; or, (b) $\max I_{\gamma_i} \leq z_v $ iff $t(\gamma_i) > 0$.  (See Fig.~\ref{Fig: vertices of graph}.) 
\end{itemize}

Regarding condition ($\star$)-2, the reader should observe that if $v , v^\prime \subset \cG$ are two vertices adjacent to a common edge then one vertex satisfies ($\star$)-2a and the other satisfies ($\star$)-2b.  Overall, it is a elementary inductive argument on the number of vertices that any LTG has an planar embedding satisfying condition ($\star$).  The reader can check that the LTG associated with our running example in (Fig.~\ref{Fig: example 1 graph}) satisfies ($\star$).

\begin{figure}[ht]

\labellist
\tiny

\pinlabel $+$ [r] at 27 115
\pinlabel $+$ [r] at 55 125
\pinlabel $+$ [r] at 113 125
\pinlabel $+$ [r] at 140 115
\pinlabel $-$ [r] at 27 35
\pinlabel $-$ [r] at 55 25
\pinlabel $-$ [r] at 110 25
\pinlabel $-$ [r] at 141 35
\pinlabel $v$ [c] at 72 75
\pinlabel $v$ [c] at 222.5 75
\pinlabel $-$ [r] at 180 115
\pinlabel $-$ [r] at 205 125
\pinlabel $-$ [r] at 263 125
\pinlabel $-$ [r] at 290 115
\pinlabel $+$ [r] at 180 35
\pinlabel $+$ [r] at 206 25
\pinlabel $+$ [r] at 263 25
\pinlabel $+$ [r] at 290 35

\endlabellist

\centering
\includegraphics[width=.3\linewidth]{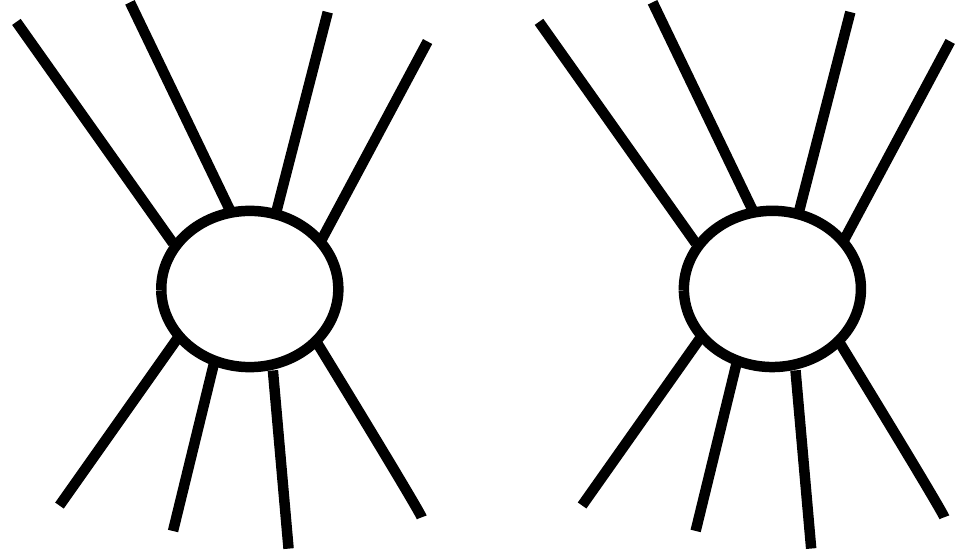}
\caption{The two possible height configuration of edges at a vertex.  The left (respectively, right) corresponds to ($\star$)-2a (respectively, ($\star$)-2b).}
\label{Fig: vertices of graph}
\end{figure}

\subsubsection{Consistent labeling of a LTG}
\label{subsubsection: consistent labeling}
As in the embedding construction establish in Proposition~\ref{proposition: geometric realization of planar surfaces}, we will initially need to designate a collection of twisting arcs in each $\bar\K$, $\chi(\bar\K) \leq 0$, that is associated with a given $(S^2, \cC)$ that has a Gauss-Bonnet weighting.  Such a designation of arcs is non-unique.  As with the case of an individual compact connected planar surface, deciding the placement of twisting arcs in $S^2$ starts with deciding which curves of $\cC$ will correspond to the $+1$ turning number of a standard $n$-turning weight set.  Specifically, for a $(S^2, \cC)$ configuration, let $C=\{\gamma_1, \ldots , \gamma_m\} \subset \cC$ be a subset collection such that for any $\bar\K, \ \chi(\bar\K) \leq 0,$ associated with the configuration, exactly one curve of $C$ lies in $\bar\K$.  For a subcollection $C$ satisfying this condition we say \emph{$C$ is a consistent labeling of $\cC$}.  Reframing this in terms of the LTG, a subcollection of edges, $E(C) \subset \cG$, is a consistent labeling if each vertex of $\cG$ that has a non-positive label is adjacent to exactly one edge in $E(C)$.

In our running example, the subcollection of $\cC$ with a $\blacksquare$ corresponds to a consistent labeling of $\cC$.  See Fig.~\ref{Fig: oriented arcs}.  Similarly, the corresponding edges of the LTG in our running example have a $\blacksquare$.  See Fig.~\ref{Fig: example 1 graph}.

The salient feature of such a labeling is that if $\gamma \in \cC$ is a crease curve that is a boundary curve of the two planar components, $\K, \K^\prime$, then the $\pm 2$-labeling on $\gamma$ when viewed as a boundary curve of $\K$ is the same as the $\pm 2$-labeling on $\gamma$ when viewed as a boundary curve of $\K^\prime$.

\begin{figure}[ht]

\labellist
\small

\pinlabel $\gamma_1$ [tl] at 140 175.73
\pinlabel $\gamma_2$ [tr] at 203 175.73
\pinlabel $\gamma_3$ [tl] at 387 172.73
\pinlabel $\gamma_4$ [tr] at 455 175.73
\pinlabel $\gamma_5$ [bl] at 240 229.58
\pinlabel $\gamma_6$ [br] at 350 229.58
\pinlabel $\gamma_7$ [t] at 493.18 133.22
\pinlabel $\gamma_8$ [tl] at 250 17.01
\pinlabel $\gamma_9$ [tr] at 360 17.01
\pinlabel $\gamma_{10}$ [tl] at 135 40
\pinlabel $\gamma_{11}$ [tr] at 215 40
\pinlabel $\gamma_{12}$ [tl] at 380 40
\pinlabel $\gamma_{13}$ [tr] at 460 40
\pinlabel $+1$ [l] at 90 194.15
\pinlabel $+1$ [l] at 190 194.15
\pinlabel $+1$ [l] at 333 194.15
\pinlabel $+1$ [l] at 433.66 194.15
\pinlabel $+1$ [l] at 90 58
\pinlabel $+1$ [l] at 190 58
\pinlabel $+1$ [l] at 335 58
\pinlabel $+1$ [l] at 435.66 58
\pinlabel $-3$ [tl] at 235 154.47
\pinlabel $-3$ [tr] at 350 154.47
\pinlabel $-3$ [bl] at 250 90
\pinlabel $-3$ [br] at 350 90
\pinlabel $+5$ [t] at 99.20 133.22

\endlabellist

\centering
\includegraphics[width=.8\linewidth]{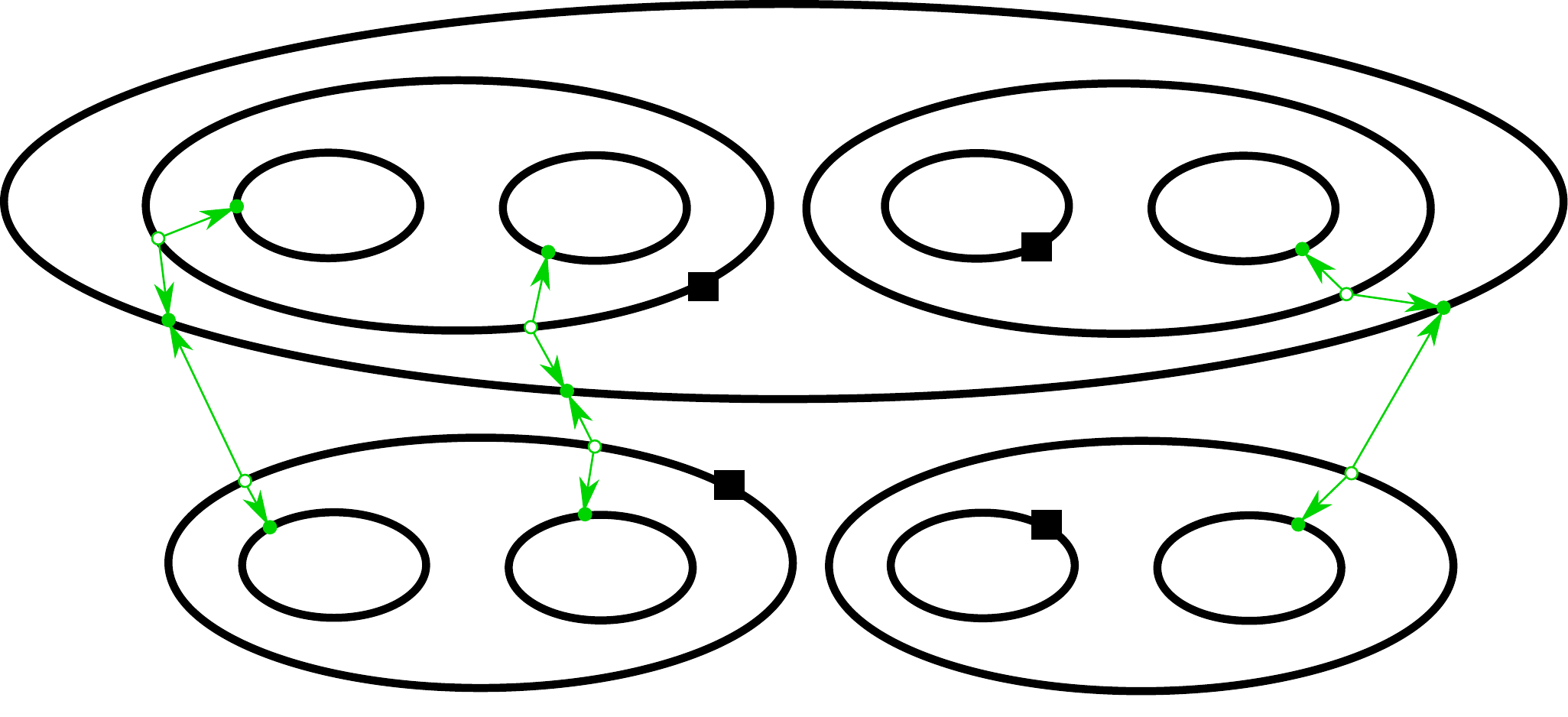}
\caption{The crease set, $\cC \subset S^2$, of the running example with the enhancements of black-squares and {\green green} edge-paths of oriented arcs.}
\label{Fig: oriented arcs}
\end{figure}


That label consistency is readily achievable follows from the {\em black-square tree decomposition} procedure that we now describe.  Let $v \in \cG$ be a vertex that is adjacent to a collection of edges, all but one of which carrying a ``$+1$'' label.  Note that the exceptional edge will have a negative integer as its label.  We make an initial choice of edge for placing a black-square, $\blacksquare$: either a ``$+1$'' edge receives the square or the exceptional edge receives it.  Next, we choose an ``alternating'' edge-path, ${\bf E}(v,v^\prime) \subset \cG$, between $v$ and another $v^\prime \in \cG$ that is also only adjacent to a collection of ``$+1$'' labeled edges and a single negative labeled edge.  The edge-path is alternating in the sense that, referring to Fig.~\ref{Fig: vertices of graph}, each interior vertex of the edge-path is adjacent to one positive and one negative edge of ${\bf E}(v, v^\prime)$.

Now, if ${\bf E}(v,v^\prime)$ does not include our initial choice of the black-square edge, $e$, we extend it to include $e$.  An extension would mean that the edge-path now goes between a $+1$ labeled vertex and $v^\prime$, and we relabel this $+1$ vertex as our $v$.  If our initial choice of a black-square edge is the exceptional negative labeled edge then there is nothing to do.

With this possible extension modification in place, we now traverse ${\bf E}(v, v^\prime)$ starting at $v$ and traveling across the first edge which will have a black-square.  We then place a  on every other edge in the edge-path ${\bf E}(v , v^\prime)$.  When we finally come to the ending vertex $v^\prime$, if it black-square is not adjacent to an edge with a black-square then we add an extending edge that is adjacent to a $+1$ vertex and label this extending edge with a black-square.
If this final extension is necessary, we again adjust the labeling of vertices so that $v^\prime$ is now a $+1$ labeled vertex. 

Now let $e_1, \ldots , e_k \subset \cG$ be the collection of edges that are adjacent to ${\bf E}(v, v^\prime)$, and $T_1 \subset \cG \setminus ({\bf E}(v , v^\prime) \cup \bigcup_i e_i)$ is a collection of trees---here ${\bf E}(v , v^\prime) \cup \bigcup_i e_i$ is the {\em link of ${\bf E}(v,v^\prime)$}.  Any component of $T_1$ that is a single vertex will necessarily have a $+1$ label.  For any component of $T_1$ that is not a single vertex we repeat the above procedure of choosing a black-square edge-path with the following proviso.

Observe that each vertex of $T_1$ inherits the positive/negative edge feature of Fig.~\ref{Fig: vertices of graph}.  In our choice for $v$ and $v^\prime$ we allow for the possibility that an edge path can end or begin at a vertex that has only negative/positive edge labels adjacent to it.  With this in mind, we iterate the choice of a black-square edge-path in each component of $T_1$.  Then $T_2$ is obtained by removing the link of each black-square edge-path in each component of $T_1$. We iterate this removal of the link of black-square edge-paths until what remains is a collection of vertices.  We now reassemble $\cG$ to obtain a consisting labeling of $\cG$.

Once we have a consistent labeling of $\cC \,(\subset S^2)$ by black-squares we can label its curves with $+2$ green-dots and $-2$ green-circles.  Specifically, for a curve $\gamma \in \cC$ with a black-square having Gauss-Bonnet weight $w_\gamma$, if $w_\gamma < 0$ we place $k_{\green{\circ}}$ green-circles along $\gamma$ such that $ w_\gamma = -1 + -2 \cdot k_{\green{\circ}}$.  Whereas if $w_\gamma < 0$, we place $k_{\green{\circ}}$ green-dots along $\gamma$ such that $ w_\gamma = 1 + 2 \cdot k_{\green{\bullet}}$.  Finally, in each $\bar\K$ component having both green-dots and green-circles we make a choice of disjoint twisting arcs connecting the dots to circles.

The reader should observe that the twisting arcs in $S^2$ define a collection of edge-paths in $\cG$, and each edge-path has its endpoints on s.c.c.'s in $\cC$ that have Gauss-Bonnet weight $+1$.  We return to our running example.

\begin{example}[Part 2]
\label{Example: part 2}
Applying the above described procedure to the graph in Fig.~\ref{Fig: vertices of graph}, we can choose black-square designations for the subcollection, $\{\gamma_3, \gamma_5, \gamma_8, \gamma_{12}\}$, which is a consistent labeling of $\cC$.  Here the initial ${\bf E}$-edge-path could be the one that goes from $\gamma_5$ to $\gamma_8$.  Once the link of this edge-path is removed from the graph, we make  $\blacksquare$-labeling choices of $\gamma_3$ and $\gamma_{12}$.  We then transfer this labeling to the configuration in Fig.~\ref{Fig: oriented arcs}.  Now referring to Fig.~\ref{Fig: oriented arcs}, to achieve a $+1$ Gauss-Bonnet weighting on curves $\gamma_1, \gamma_2, \gamma_4, \gamma_{10}, \gamma_{11}, \gamma_{12}$, we place a single $+2$ green-circle on each.  There is no need for additional labeling of $\gamma_3$ and $\gamma_{12}$ since their $\blacksquare$-label implies their Gauss-Bonnet weights are already $+1$. To achieve a $-3$ Gauss-Bonnet weight on $\gamma_5$ and $\gamma_8$ we label each of them with two $-2$ green-circles.  Whereas, we need only label $\gamma_6$ and $\gamma_9$ with a single $-2$ green-circle.  Finally, to achieve the $+5$ Gauss-Bonnet weight on $\gamma_7$ we must label it with three $+2$ green-dots.  Now connecting green-circle to green-dots in each planar component, we obtain three edge-paths.
\end{example}

\begin{figure}[ht]

\labellist
\small

\pinlabel $\gamma_1$ [bl] at 204.08 730
\pinlabel $\gamma_2$ [br] at 317.45 730
\pinlabel $\gamma_3$ [bl] at 375 595
\pinlabel $\gamma_4$ [tr] at 260 583
\pinlabel $\gamma_5$ [bl] at 460 776.62
\pinlabel $\gamma_5$ [bl] at 460 547.03
\pinlabel $\gamma_6$ [bl] at 367 558
\pinlabel $\gamma_6$ [tr] at 350 450
\pinlabel $\gamma_7$ [t] at 167.23 485
\pinlabel $\gamma_7$ [t] at 167.23 278
\pinlabel $\gamma_8$ [bl] at 460 342.96
\pinlabel $\gamma_8$ [bl] at 460 141.72
\pinlabel $\gamma_9$ [t] at 355 239
\pinlabel $\gamma_9$ [b] at 355 155
\pinlabel $\gamma_{10}$ [tr] at 120 90
\pinlabel $\gamma_{11}$ [tr] at 345 90
\pinlabel $\gamma_{12}$ [tr] at 275 125
\pinlabel $\gamma_{13}$ [tr] at 270 177

\endlabellist

\centering
\includegraphics[width=.5\linewidth]{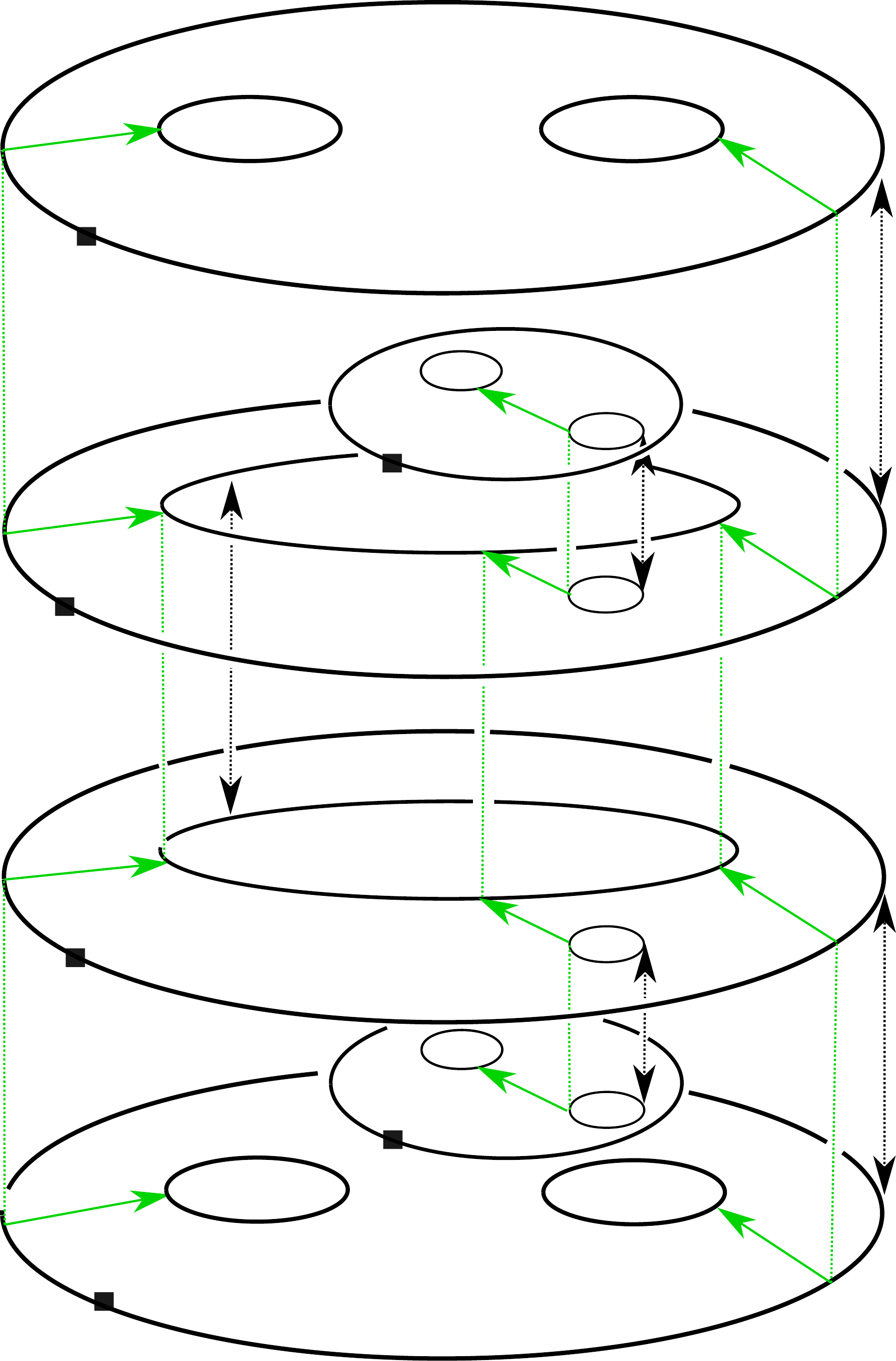}
\caption{A stacking of the planar surfaces from the running example which is consistent with the height/order of the associated vertices in Fig.~\ref{Fig: example 1 graph}.}
\label{Fig: stack example}
\end{figure}

\subsubsection{The height of vertices of a LTG}
\label{subsubsection: The height of vertices}
For a LTG embedding, $\cG \into \bR \times \bR$, satisfying conditions ($\star$), there is still some ambiguity in establishing the heights of the vertices of $\cG$.  To address this issue we utilize our black-square tree decomposition to make the height assignments of the vertices of $\cG$.

We first remark that a black-square decomposition results in having each edge of $\cG$ being in either a black-square edge-path of the decomposition or adjacent to two such edge-paths in the decomposition. 

With that observation in mind, we consider the initial black-square edge-path ${\bf E}(v,v^\prime)$ of a decomposition.  We can readily assign heights to its vertices that correspond to the order in which one encounters them as one traverses the path starting at $v$ and ending at $v^\prime$.  This is due to the condition that the edge-path is alternating---height $0$, height $1$, and so on.  Next, let $e_\gamma \in \cG$ and ${\bf E}^\prime \subset \cG$ be an edge and black-square edge-path coming from the decomposition with ${\bf E}(v,v^\prime) \cap {\bf E}^\prime = \varnothing$ such that $e_\gamma$ is adjacent to both ${\bf E}(v, v^\prime)$ and ${\bf E}^\prime$.  Let $\bar{v}, e_{\bar{\gamma}} \in {\bf E}(v,v^\prime)$ be a vertex and an edge that satisfy the following: $\bar{v}$ is adjacent to both $e_\gamma$ and $e_{\bar{\gamma}}$, and, $e_\gamma$ and $e_{\bar{\gamma}}$ have the same $\pm$ label.  We then place the height assignment for the vertices of ${\bf E}^\prime$ within the $z$-interval support, $I_\gamma$.  Within $I_\gamma$ we again assign heights to the vertices in ${\bf E}^\prime$ that correspond to the order in which they are encountered as ${\bf E}$ is traversed.

In general, for two black-square edge-paths, ${\bf E}_1$ and ${\bf E}_2$---their indices corresponding to the order in which they are chosen in the decomposition---and $e_{\bar{\gamma}}$ being the commonly adjacent edge, we will place the height assignments of the vertices of ${\bf E}_2$ within the $z$-interval support of the edge of ${\bf E}_1$ that shares a vertex with $e_{\bar{\gamma}}$ and has the same $\pm$ label.

\subsubsection{Stacking planar surfaces}
\label{subsubsection:Stacking planar surfaces}
We now utilize the embedding of the graph, $\cG$, in $\bR \times \bR$ to guide a ``stacking'' of components of $S^2 \setminus \cC$ in $\bR^2 \times \bR$.  The setup is that we again have a configuration, $(S^2 ,\cC)$, that has a Gauss-Bonnet weighting, and an associated embedding of its LTG, $\cG \into \bR \times \bR \,(\subset \bR^2 \times \bR)$, that satisfies conditions ($\star$).  Let $C \subset \cC$ be a subcollection that corresponds to a consistent labeling of $\cC$.  Let $\bar\K$ be a compact connected planar surface coming from a connected component of $S^2 \setminus \cC$ with $\chi(\bar\K) \leq 0$.  Finally, let $v_\K \subset \cG$ be the vertex associated with $\bar\K$.

With the above setup we take $\e_s : \bar\K \into \bR^2 \times \{ z_{v_{\K}} \}$ to be a standard embedding such that for the curve, $\gamma = C \cap \partial \bar\K$, we have $t(\e_s(\gamma)) = +1$.  We have such a standard embedding for each planar component where $\chi(\bar\K) \leq 0$.  By condition ($\star$)-0 we know each plane $\bR^2 \times \{{\rm pt.}\}$ contains at most one planar surface.  

Now suppose two distinct vertices, $v_\K , v_{\K^\prime} \in \cG$, are adjacent to an edge $e \in \cG$.  By the definition of $\cG$, this implies $\bar\K$ and $\bar\K^\prime$ share a common boundary curve, $\gamma_e \in \cC$.  We use $\e_s$  (respectively, $\e^\prime_s$) as notation for the standard embedding associated with $\bar\K$ (respectively, $\bar\K^\prime$).  We now require that the $\bar\K$-planar components associated with $(S^2,\cC)$ satisfy the following positioning conditions.

\noindent
{\bf P1}---For two planar components sharing some $\gamma_e$ as a common boundary curve, we position two surfaces in their respective $\bR \times \{{\rm pt.}\}$ planes so that $\p \circ \e_s (\gamma_e) =\p \circ \e^\prime_s (\gamma_e)$.  Additionally, we position $\e_s(\gamma_e)$ and $\e^\prime_s(\gamma_e)$ so that they are bijections on the set of green-circle/dots in $\gamma_e$.

\noindent
{\bf P2}---For two planar components sharing some $\gamma_e$ as a common boundary curve, let $p \subset \gamma_e (= \partial \bar\K \cap \partial \bar\K^\prime)$ be a point that is either a green-circle or green-dot.  Let $a \subset \bar\K$ and $a^\prime \subset \bar\K^\prime$ be two twisting arcs adjacent to $p$.  We further position $\e_s(\gamma_e)$ and $\e^\prime_s(\gamma_e)$ so that $\pi \circ \e_s (a) = \pi \circ \e^\prime_s (a^\prime)$.

We claim that conditions P1 and P2 can be achieved simultaneously for all $\bar\K$-planar components in their respective copies of $\bR^2 \times \{ {\rm pt.} \}$.  Indeed, if we allow our stacking of planar surfaces to mirror the black-square tree decomposition of $\cG$, these two conditions are naturally met.  That is, first stack only the $\bar\K {\rm 's}$ that correspond to vertices in the initial black-square edge-path ${\bf E}(v, v^\prime)$.  For such a ``linear'' stacking of planar surfaces P1 and P2 are easily achieved.  For the next edge-path, ${\bf E}^\prime$, within the $z$-interval support of $I_\gamma$, since it is also a linear stacking, again P1 and P2 are easily achieved.  Iterating this mirroring of the decomposition, we obtain a stacking of planar surfaces satisfying P1 and P2.

\begin{example}[Part 3]
\label{Example: part 3}
Returning to our running example, in Fig.~\ref{Fig: stack example} we show a stacking of $\bar\K, \ \chi(\bar\K) \leq 0,$ components such that conditions P1 and P2 are satisfied.  In particular, the double-headed arrows indicate how the projected image of the boundary curves along with the twisting arcs will coincide under the $\p$ projection map.
\end{example}
\begin{figure}[ht]

\labellist
\small

\pinlabel	$\p$	[r]	at	145	235
\pinlabel   $\bR^2$ [r] at  155 80

\endlabellist

\centering
\includegraphics[width=.8\linewidth]{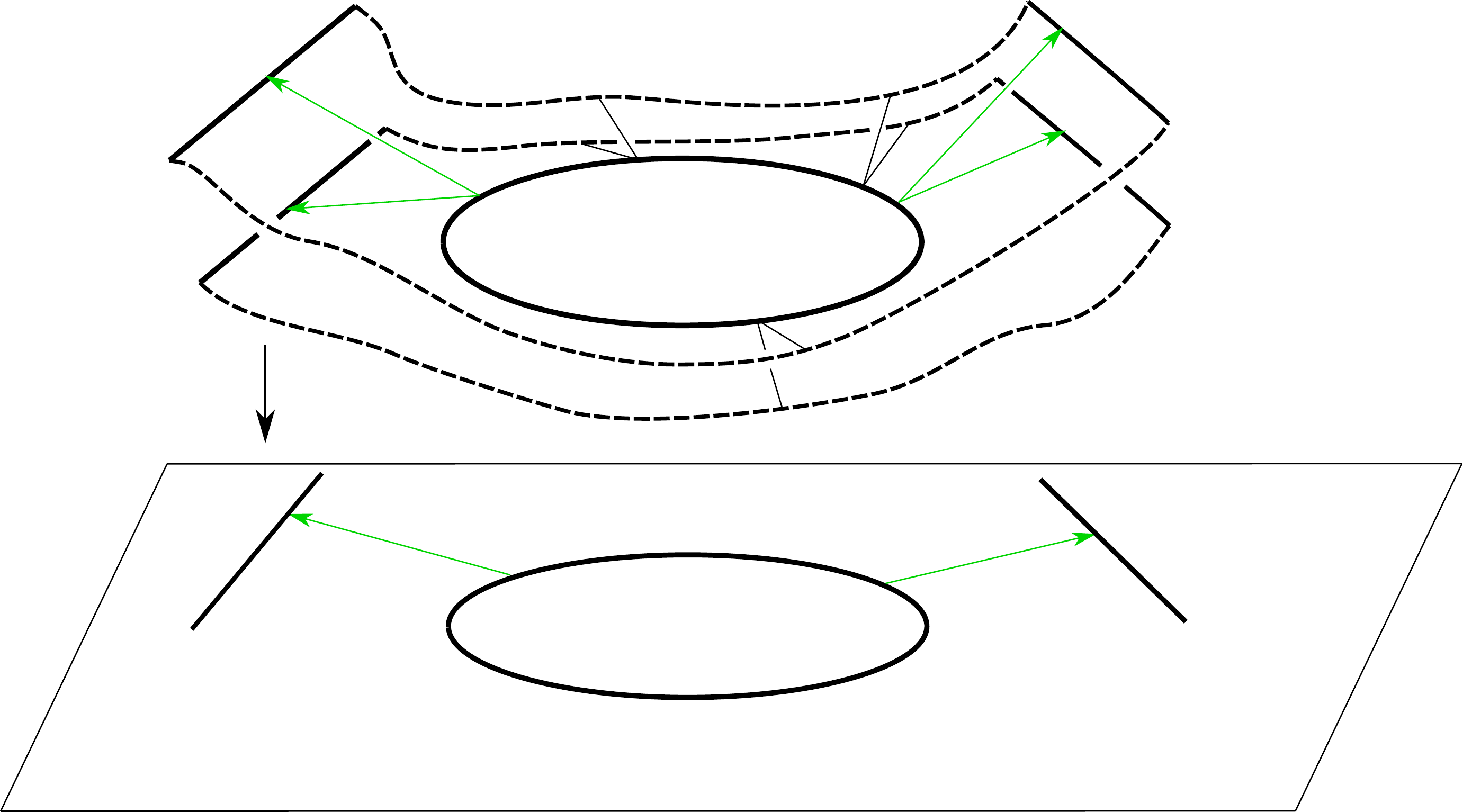}
\caption{A gluing of two stacked planar components along a shared boundary curve.  Note that this creates the stacking of {\green green} oriented arcs whose projection into the $\bR^2$-plane is a single oriented arc.}
\label{Fig: gluing stacked curves}
\end{figure}

\subsubsection{Gluing stacked curves.}
\label{subsubsection: gluing stacked curves}
With the $\bar\K$ components ($\chi(\bar\K) \leq 0$) associated with $(S^2, \cC)$ initially embedded in $\bR^2 \times \bR$ as describe in \S\ref{subsubsection:Stacking planar surfaces}, we next glue together the two copies of each embedded curve of $\cC$.  In particular, for any s.c.c.\ $\gamma \in \cC$ with Gauss-Bonnet weight $t(\gamma) \not= +1$, there will be two planar components, $\bar\K$ and $\bar\K^\prime$ for which $\gamma = \bar\K \cap \bar\K^\prime$.  As a result of our positioning conditions, {\bf P1} and {\bf P2}, we can perform an isotopy in $\bR^2 \times [z_{v_{\bar\K}}, z_{v_{\bar\K^\prime}}]$ that glues $\e_s(\gamma)$ and $\e^\prime_s(\gamma)$ together.  See 
Fig.~\ref{Fig: gluing stacked curves}.

\begin{figure}[ht]

\labellist
\small

\pinlabel	$\p$	[r]	at	330	165

\endlabellist

\centering
\includegraphics[width=.5\linewidth]{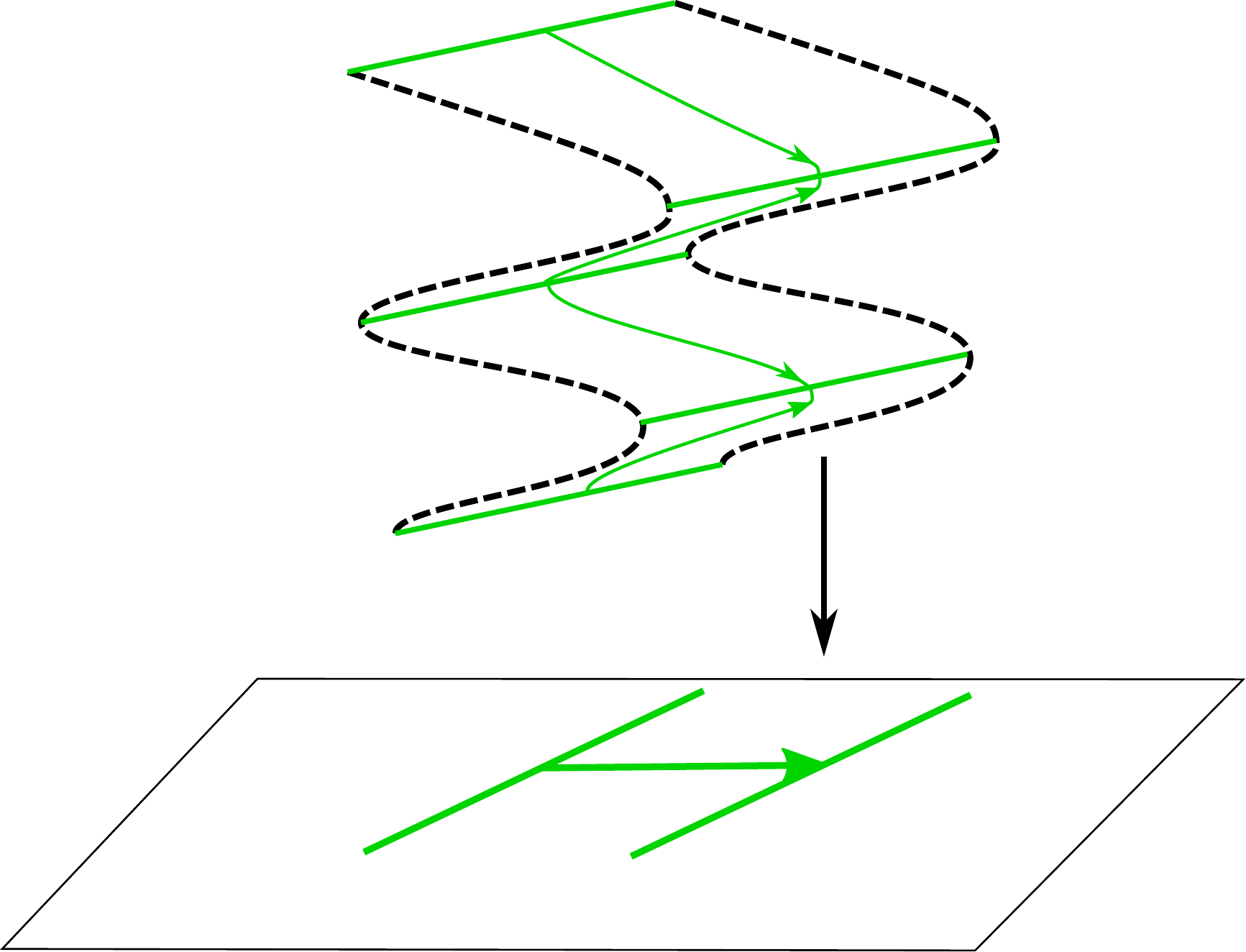}
\caption{The stack of oriented arcs projects onto a single oriented arc in the $\bR^2$-plane.}
\label{Fig: stacking of arcs}
\end{figure}

\begin{figure}[ht]
\centering
\includegraphics[width=.6\linewidth]{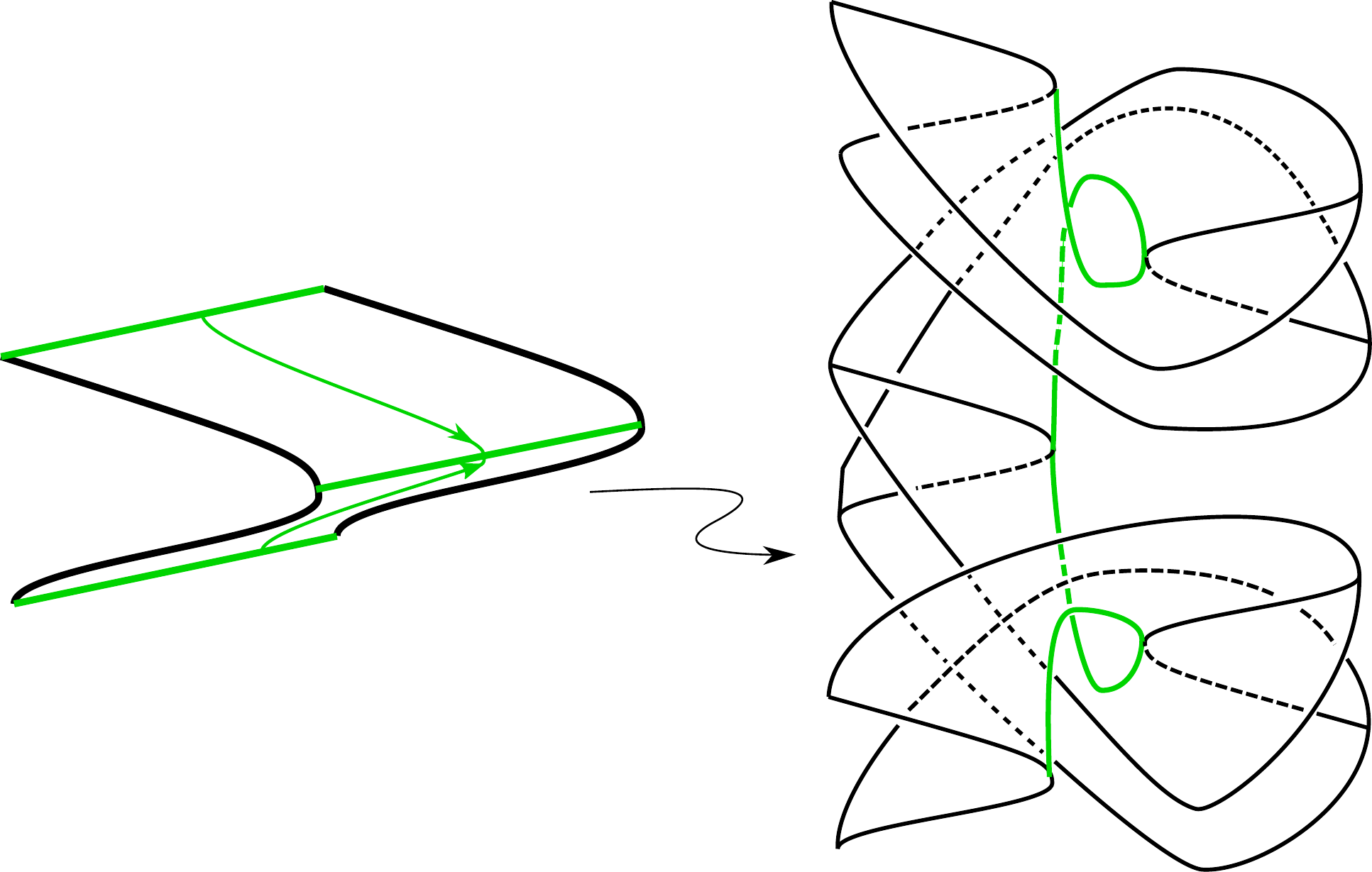}
\caption{Performing a twist on the stacked arcs.}
\label{Fig: twisting stack}
\end{figure}

\subsubsection{Twisting the stacked planar surfaces.}
\label{subsubsection: twisting a stack of arcs}
Once a gluing of all $(\e_s(\gamma) , \e^\prime_s(\gamma))$ pairs of curves have been performed, we observe that condition ($\star$)-2 implies that there is an accordion-like-folding neighborhood of each edge-path of twisting arcs.  See Fig.~\ref{Fig: stacking of arcs}.  Finally, we can perform our twisting operation to this entire edge-path neighborhood as depicted in Fig.~\ref{Fig: twisting stack}.  After performing these twisting operations the image of each s.c.c.\ of $\cC$ will realize its assigned Gauss-Bonnet weighting.  In particular, all curves having $+1$ turning number can now be placed in normal position---nowhere zero curvature---as depicted in Fig.~\ref{Fig: k_i= -1 and -2} and then capped off with a disk.  The interior of such disks will have empty intersection with the crease set.



\section{Embeddings and branched surfaces}
\label{section: classification set-up}
The main thrust of this section is to develop the machinery needed for the classification of isotopy classes for regular embeddings of $S^2$ into $\bR^2 \times \bR$ having a crease set of three curves without corners.  Once our machinery has been developed we will carry out the classification calculation in \S\ref{section: when |cC|=3}.

We start by formalize our meaning of isotopy class with the following definition.
\begin{definition}
An isotopy $\e_t: S\to \bR^2\times \bR$, $0\le t\le 1$, is {\em regular} if the critical set of $\p\circ \e_t$ is $\cC$ for all $t\in[0,1]$.
\end{definition}

Such an isotopy restricts to each subsurface $\bar\K$ as defined in \S\ref{subsection: corners}. Note that a regular isotopy preserves the local data of the embedding, in that it cannot add or remove corners, nor change the sign of a crease curve, as defined in \S\ref{subsection: Orientation of crease}. However, a regular isotopy may pass through non-regular embeddings, as the assumption of transverse self-intersections of $\p\circ \e (\cC)$ may be violated.

\subsection{The natural branched surface}
\label{subsection: branched surface}
There is a natural branched surface with boundary, $\B \subset \bR^2 \times \bR$, associated with any regular embedding, $\e : S_g \rightarrow \bR^2 \times \bR$.  This is due to the fact that the projection, $\p: \bR^2 \times \bR \into \bR^2$, induces an $I$-bundle fibration on $M_{S}$, the 3-manifold bound by $\e(S)$, where the fibers are the components of $\p^{-1} (p) \cap M_{S}$, for any $p \in \bR^2$.  $\B$ is then the quotient space, $M_S / \{I - {\rm fibers } \}$.  We will use $q_{\e}: \e(S) \rightarrow \B \, \ (\subset \bR^2 \times \bR)$ to denote the image of $S$ under this quotient map.

\begin{figure}[ht]
\centering

\begin{subfigure}{.3\textwidth}
\centering
\includegraphics[width=\textwidth]{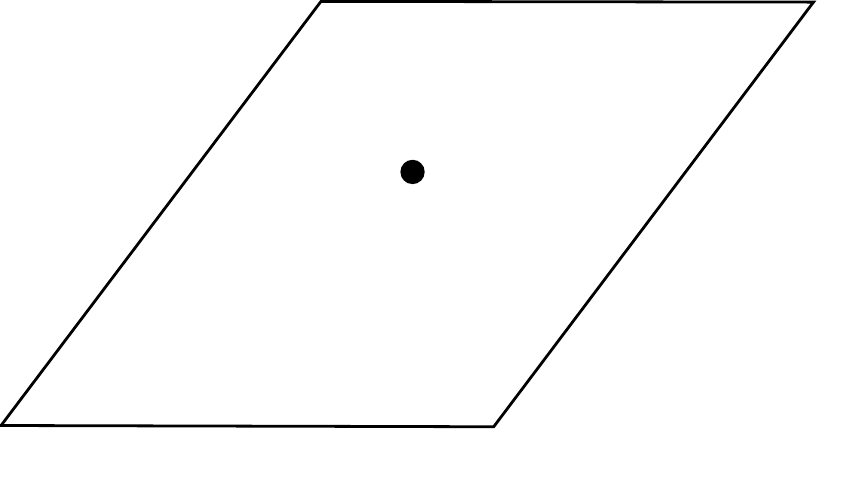}
\caption{Away from branching.}
\label{Fig: neighborhoods of B1}
\end{subfigure}
\begin{subfigure}{.3\textwidth}
\centering
\includegraphics[width=\textwidth]{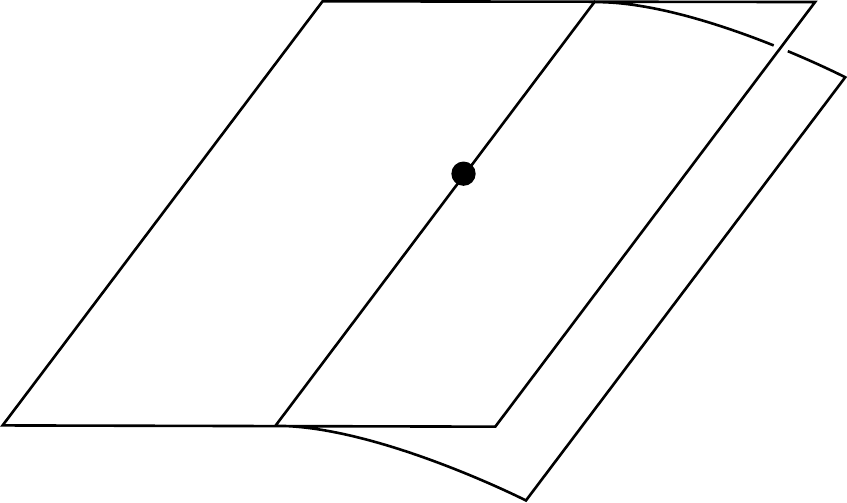}
\caption{A branching point.}
\label{Fig: neighborhoods of B2}
\end{subfigure}
\begin{subfigure}{.3\textwidth}
\centering
\includegraphics[width=\textwidth]{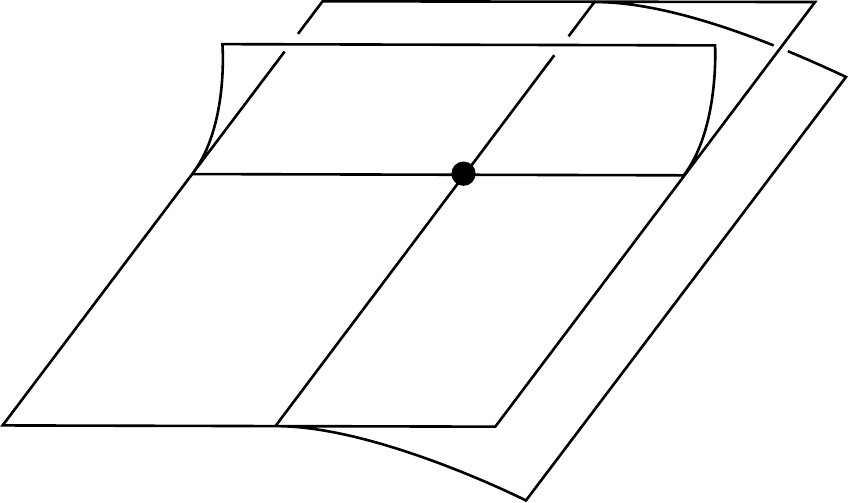}
\caption{A double point of branching.}
\label{Fig: neighborhoods of B3}
\end{subfigure}
\caption{Neighborhoods of different points of $\B$.}
\label{Fig: neighborhoods of B}
\end{figure}

Taking a brief aside for readers who are not familiar with branched surfaces, our $\B$ will be a smooth surface with a codimension-$1$ branching locus singular set.  Fig.~\ref{Fig: neighborhoods of B} depicts the three neighborhood models for an interior point in $\B$.  The boundary of $\B$ is a compact $1$-submanifold which may contain endpoints of the of the branching locus.  
For our discussion on the classification of embeddings of $2$-sphere with three crease components, the branching locus and boundary of $\B$ will be disjoint.  The key point for our discussion is that at every interior point, $p \in \B \setminus \partial \B$,  there is a well defined tangent plane $T_p \B$ for which the differential, 
$d\p: T_p \B \rightarrow \bR^2 \,(\subset \bR^2 \times \bR)$,
will be an isomorphism.  This interior tangent 
bundle smoothly extends to a tangent bundle of the boundary for which $d\p$ is again an isomorphism.

Although it is possible to connect our discussion of $\B$ with the historical development of branched surfaces in the literature---specifying the horizontal and vertical boundary of the $I$-bundle structure of $M_S$ and their interplay with $\cC$---it is more straight forward to develop $\B$ from our current machinery.  To that end let, $\cC = \cC^+ \cup \cC^-$, be the decomposition of the crease set into its the positive folding and negative folding components.  Coming from the alternative way of defining the folding direction, we observe that there is a natural embedding and identification, $\cC^+ \longleftrightarrow \partial \B \,(\subset \B)$, coming from the fact that a point, $p_0 \subset \partial \B$, corresponds to a point component of $\p^{-1} (p) \cap M_{S}$ for some $p \in \bR^2$.  Thus, we will abuse notion by having $\cC^+ = \partial \B$.

Similarly, let $\LL$ denote the branching locus of $\B$. We now observe that there is a natural immersion, $\cC^- \into \LL \,(\subset \B)$.  This comes from the fact that a point, $p \in \LL$, corresponds to a fiber component, $I \subset \p^{-1} (\p(p)) \cap M_{S}$, for which ${\rm int}\,{I} \cap \e(\cC^-) \not= \varnothing$.  (Here the ${\rm int}\,{I}$ is the interior of the fiber.)  Since $\p \circ \e(\cC)$ is a $4$-valent graph, ${\rm int}\,{I} \cap \e(\cC^-)$ is either one or two points.  If it is just one point then $p$ lies in a neighborhood modeled on Fig.~\ref{Fig: neighborhoods of B2}.  If there are two points then the neighborhood model corresponds to Fig.~\ref{Fig: neighborhoods of B3}, a double point in $\LL$.  Again, we may abuse notion by writing $\cC^- = \LL \subset \B$. We refer readers interested in a fuller development of branched surfaces to \cite{[Le]}.

A primary use of branched surfaces is their 
``book-keeping'' function for carrying embedded surfaces in a fibered neighborhood---surfaces that correspond to assigning non-negative 
weights to the components of $\B \setminus \LL$.  (Again, see \cite{[Le]} for a detailed treatment.)  

For $\B$ in our setting, there is a natural way to see the embedding of the components, $\{ \bar{K}_1, \cdots , \bar{K}_l \} \subset S_g$, ``carried'' by $\B$ if we allow both $\partial \B$ and $\LL$ to serve as boundary.  To start, it is natural to decompose the $\bar{K}_i {\rm 's}$ into two sets.  Recall that each $\bar{K}_i$ inherits a normal vector field coming from the outward pointing normal of $M_S$. As such, for a given open $\K_i$, its normal vectors project onto $\lambda \mathbf{j}$, where $\mathbf{j}$ is the positively oriented unit associated with the $z$-axis---the $\bR$ factor of $\bR^2 \times \bR$.  Let $\bK^+$ (respectively, $\bK^-$) be the sub-collection of $\bar{K}_i {\rm 's}$ for which $\lambda >0$ (respectively, $\lambda <0$).

\begin{figure}[ht]

\labellist
\small
\pinlabel $\bK^+$ [r] at 185 158
\pinlabel $\bK^-$ [l] at 435 158
\endlabellist

\centering
\includegraphics[width=.6\linewidth]{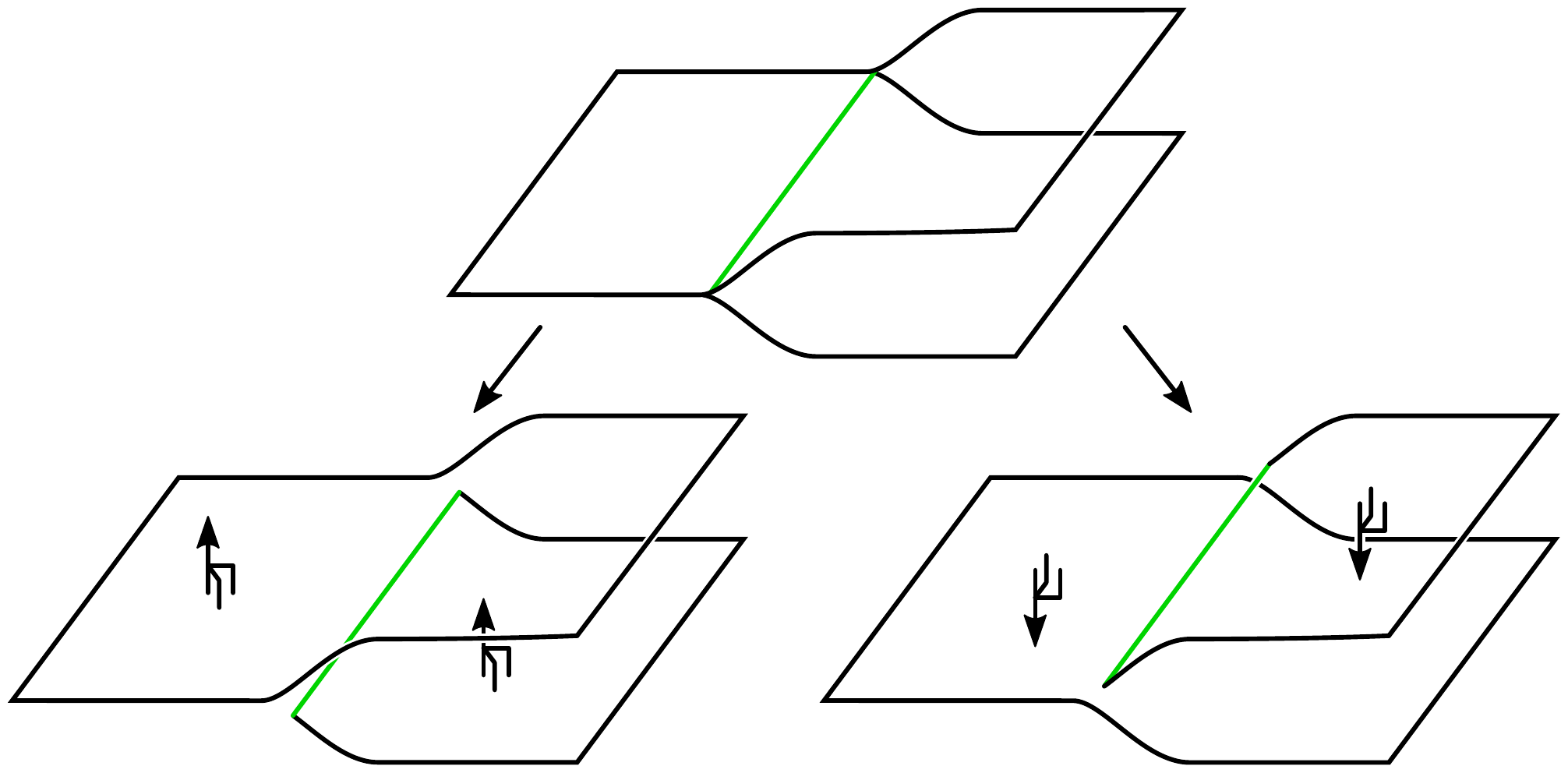}
\caption{The branched surface carries $S_g$ by splitting appropriately at $\LL$.}
\label{Fig: carries}
\end{figure}

With the above in place the reader should observe that a weight assignment of $+1$ to each component of $\B \setminus \LL$ corresponds to $\B$ carrying either $\bK^+$ or $\bK^-$.  Specifically, in Fig.~\ref{Fig: carries} we locally depict how $\bK^+$ or $\bK^-$ are configured near $\partial \B$ and $\LL$.  The reader should observe that we treat $\LL$ as lying in the boundary of the resulting surface, so that the weighting satisfies the branching equations.

\subsection{Regular isotopies and the branched surface.}
\label{subsection: isotopies of B}
Given a regular isotopy, $\e_t : S \into \bR^2 \times \bR$, $0 \leq t \leq 1$, there is an associated family of branched surfaces $\bar{\e}_t(S)=q_{\e_t}\circ\e_t(S)$, $0\le t\le 1$. When all self-intersections of $q_{\e_t}\circ \e_t (\cC)$ are transverse, the topological type of $\bar\e_t(S)$ is fixed, so $\bar\e_t$ is an isotopy of the associated branched surface $\B$ such that all points of $\p (\B \setminus (\partial \B \sqcup \LL))$ are regular values; in this case we abuse notation and write simply $\bar\e_t(\B)$.  To account for those isolated values of $t\in[0,1]$ where $q_{\e_t}\circ\e_t(\cC)$ has non-transverse intersections, we expand the meaning of a regular isotopy $\bar{\e}_t$ of $\B$ to include two classical operations on branched surfaces, {\em pinching} and its inverse {\em splitting}. Fig.~\ref{Fig: branched surface isotopy} depicts these local operations along a neighborhood of the branching locus, $\LL \subset \B$.  Both operations can result in the introduction or removal of double points.

\begin{figure}[ht]

\centering
\includegraphics[width=.9\linewidth]{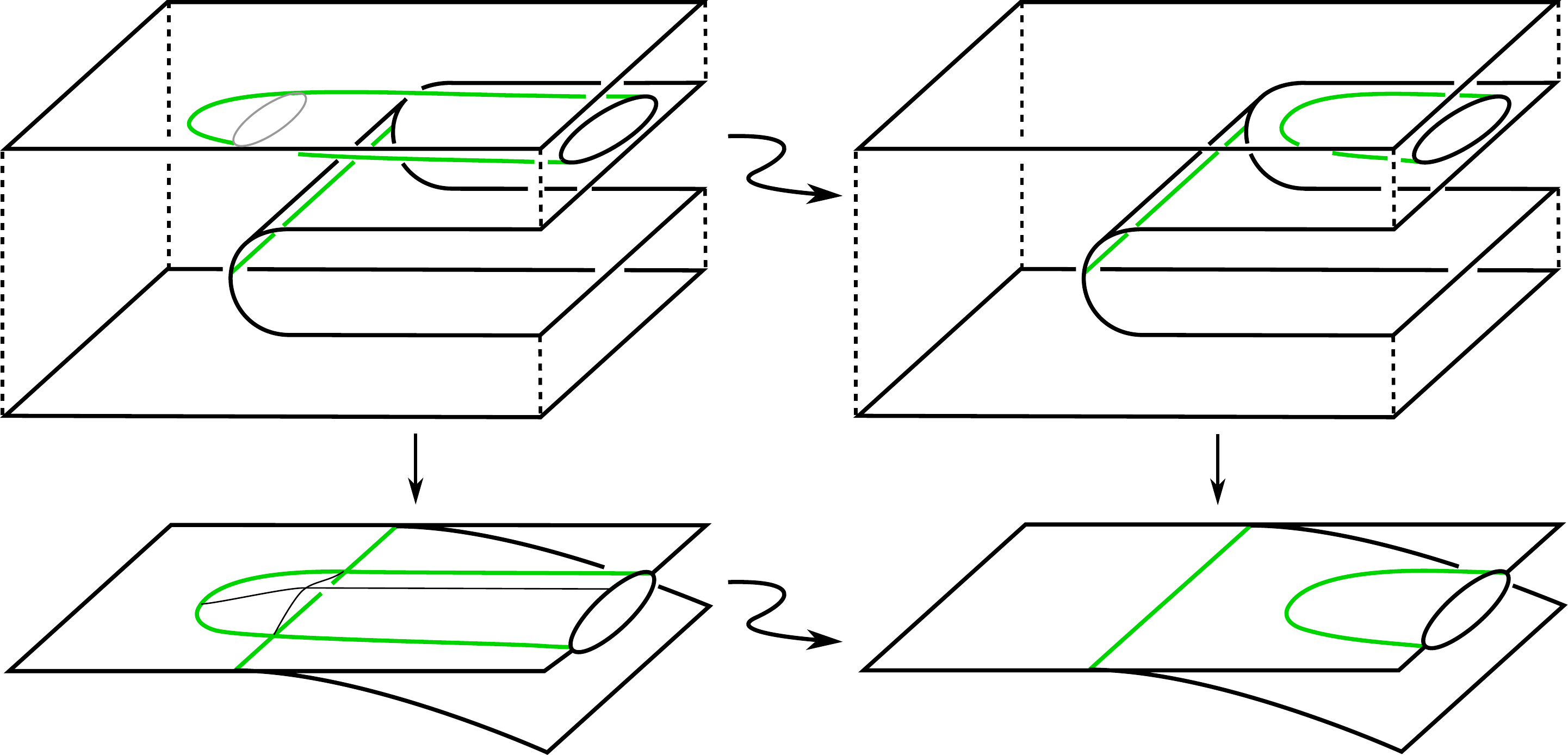}
\caption{A regular isotopy of $\e(S)$ which descends to a pinching move on $\B$.}
\label{Fig: branched surface isotopy}
\end{figure}

\subsubsection{Boundary based isotopies}
\label{subsubsection: boundary based isotopies}
One primary source of regular isotopies comes from moving $\partial \B$ along a portion of ${\rm int}\,{\B}$.  Specifically, we say $R \subset \B \setminus \LL$ is a {\em boundary annular region} if $R$ is an annulus such that $\partial R = a \cup b$ with $a \subset \partial \B$ and $b \subset {\rm int}\,{\B} \setminus \LL$.  We say $R \subset \B \setminus \LL$ is a {\em boundary half-disk} if $\partial R = a \cup b$ where $a \subset \partial \B$ is an arc and $b \subset \B \setminus \LL$ is a properly embedded arc---$\partial b = \partial a \subset \partial B$---and $(\partial \B \setminus {\rm int}\,{a} ) \cup b$ is a smooth $1$-manifold.  We then have the following lemma.

\begin{lemma}
\label{lemma: isotopies across R}
Let $R \subset \B \setminus \LL$ be either a boundary annular region or a boundary half-disk region.  Then there is a regular isotopy $\bar{\e}_t (\B), \ 0 \leq t \leq 1$, such that $R \cap \bar{\e}_1 (\B) = b$ and $\bar{\e}_t (\B \setminus R) = \mathbf{ id}, \ 0 \leq t \leq 1$.
\end{lemma}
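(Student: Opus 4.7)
The plan is to explicitly construct the isotopy $\e_t : S \into \bR^2 \times \bR$ (with corresponding $\bar\e_t(\B) = q_{\e_t} \circ \e_t(S)$) by first defining a retracting flow on $R$ that pushes $a$ onto $b$, then lifting it to $S$. Parameterize $R$ as a smooth product $a \times [0,1]$ in the annular case (with $a = a \times \{0\}$ and $b = a \times \{1\}$), or as a half-disk fibered by parallel arcs joining $a$ to $b$ in the boundary half-disk case. Then define a smooth family of retractions $\phi_t : R \to R$, $0 \leq t \leq 1$, that slide $a$ along these fibers, smoothed near the endpoints of $a$ via a bump function so the deformation extends by the identity across $\partial R \setminus (a \cup b)$ (the arc where the half-disk meets the rest of $\partial \B$; vacuous in the annular case).

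Next, lift to the surface. Because $R$ avoids $\LL$, the preimage $q_\e^{-1}(R)$ is a doubled copy of $R$ glued along the positive crease $\gamma = q_\e^{-1}(a) \subset \cC^+$, giving an annular (resp. disk) collar neighborhood $U \subset S$ of $\gamma$ whose two sheets meet tangentially along $\gamma$ with outward normals pointing toward each other. Each intermediate position $\phi_t(a) \subset R$ prescribes both the target location in $\bR^2 \times \bR$ of the moving crease image $\e_t(\gamma)$ and the placement of the two sheets that cover the un-retracted portion of $R$. These prescriptions assemble into a continuous family $\e_t : U \into \bR^2 \times \bR$, which I would extend by the identity on $S \setminus U$ using the same bump function.

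Regularity is the substance of the check. The two sheets continue to meet along $\gamma$ as a positive fold throughout, so the critical set of $\p \circ \e_t$ remains exactly $\cC$ viewed as a subset of $S$; no new corners appear because the fibers of $\phi_t$ can be chosen to keep the tangent direction of $\e_t(\gamma)$ bounded away from vertical. Because $U \cap \cC^- = \varnothing$, no negative folds are disturbed, and the identity extension preserves the rest of the embedding exactly. The boundary half-disk case proceeds identically, with $U$ a disk neighborhood of the crease arc rather than an annulus.

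The main obstacle I anticipate is handling the isolated times at which $(\p \circ \e_t)(\cC)$ develops non-transverse self-intersections, which occur when $\phi_t(a)$ sweeps past other strands of $\p \circ \e(\cC)$. However, the expanded notion of regular isotopy introduced at the start of \S\ref{subsection: isotopies of B} explicitly permits such intermediate configurations, realized as the pinching/splitting moves of Figure \ref{Fig: branched surface isotopy}. So this obstacle is accommodated by the definition itself rather than by the construction, and the flow above suffices.
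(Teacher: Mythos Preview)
Your proposal is correct and follows essentially the same approach as the paper: parametrize $R$ as a product $a \times [0,1]$, use a flow pushing $a$ toward $b$, and extend by the identity off $R$. The paper's argument is terser---it works directly on $\B$ without explicitly lifting to $S$ or verifying the regularity conditions---whereas you spell out the lift to the doubled preimage $q_\e^{-1}(R)$, the bump-function smoothing in the half-disk case, and the check that $\cC$ remains the critical set throughout; these additions are welcome but not a departure in strategy.
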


\begin{proof}
Let $\phi_t$, $0 \leq t \leq 1$, be an smooth flow coming from extending the inward pointing normal vector field along $a\, (\subset \partial R)$ to all of $R$ such that along $b$ the vector field is an outward (with respect to $R$) pointing normal. Viewing $R\cong a\times[0,1]$, with $a\times\{1\}$ identified with $a$, we take $\bar{\e}_t (\B)$ to be $\phi_t(a,s)s$ on $R$ and $\bar{\e}_t(\B \setminus R) \cong \mathbf{id}$, $0 \leq t \leq 1$. 
\end{proof}

\subsubsection{Pinching and double-cusp disks}
\label{subsubsection: pinching and double-cusp disks}
Next, we develop the tools for pinching a branched surface $\B$.  This pinching will correspond to a regular isotopy of the embedding $\e$, thus preserving $\cC^-$ (which corresponds to $\LL$) and $\cC^+$ (which corresponds to $\partial \B$).  
Let $d^+ , d^- \subset \B$ be two simple arcs such that:
\begin{itemize}
    \item ${\rm int}\,{d}^\pm \subset \bK^\pm$;
    \item $\partial d^+=\partial d^-=d^+ \cap d^- \subset \LL$; and,
    \item $\p(d^+) = \p(d^-)$.
\end{itemize}

Then $d^+ \cup d^-$ bounds a {\em double cusp disk}, $\td$, for which $\p( \td) = \p( d^\pm)$.  Necessarily, $\partial \td \cap \LL$ will contain the two {\em cusp points} of $d^+ \cap d^-$ (along with all points of ${\rm int}\,{d}^\pm \cap \LL$).

The double cusp disks that will be of interest to us are ones that are associated with a {\em pinching $3$-ball}.  Specifically, suppose $d^\pm \subset \bK^\pm$ split off disks $\delta^\pm \subset \bK^\pm$ such that
$$\Sigma \ = \ \td \cup  \delta^+ \cup \delta^- $$ is a $2$-sphere bounding a $3$-ball, $B_\Sigma$.  To parse out the gluing: $\td$ and $\delta^+$ are glued along $d^+$; $\td$ and $\delta^+$ are glued along $d^+$; and, $\delta^+$ and $\delta^-$ are glued along a segment $\ell \subset \LL$ for which $\partial \ell = \partial d^+ = \partial d^-$.  We observe that $\p (B_\Sigma) = \p ( \delta^+) = \p (\delta^-)$.  As such, for any point $p \in \p (B_\Sigma)$ the segment components of its pre-image give us an $I$-bundle structure for $B_\Sigma$. Specifically,  $$B_\Sigma = \bigsqcup_{t \in [-1,1]} D_t,$$ where 
\begin{itemize}
    \item $D_{-1} = \delta^+$ and $D_{1} = \delta^-$;
    \item for all $t \in [-1,1]$, $\p(D_t) = \p(\delta^\pm)$;
    \item for all $t \in [-1,1]$ and any $p \in {\rm int}\,{D}_t$, $d\p_p: T_p D_t \rightarrow \bR^2$ is an isomorphism;
    \item for all $t \in [-1,1]$, $\partial D_t = \ell \cup d_t$ where $d_t \subset \td$; and
    \item $\td = \bigsqcup_{t \in [-1,1]} \  d_t$ with $d_{-1} = d^+$ and $d_1 = d^-$.
\end{itemize}

Observe that when ${\rm int}\,\td \cap \B = \varnothing$, then ${\rm int}\,B_\Sigma \cap \B = \varnothing$.  We can then use the $I$-bundle structure of $B_\Sigma$ to pinch $\B$ along $\ell$, with the isotopy moving $\ell$ through the disk $\delta^+ \,(\subset B_\Sigma)$ to the arc $d^+ \,(\subset \ \td)$.

When we utilize the above pinching procedure in \S\ref{section: when |cC|=3}, we will be concerned with, first, identifying such double-cusp disks and, second, ensuring that an associated ${\rm int}\,{B}_\Sigma$ has empty intersection with $\B$. We return to this second point in \S\ref{subsubsection: emptying B-Sigma}.

\subsubsection{Pinching and $\partial$-close branching locus segments}
\label{subsubsection: pinching boundary close}
For a given, $\e: S_g \rightarrow \bR^2 \times \bR$, and associated branched surface, $\B$, let $\ell \subset \LL$ be a closed segment with $\partial \ell = \{p_1 , p_1\}$.  Let $R \subset \bK^+ \ {\rm or} \ \bK^-$ be a rectangular region (for simplicity, we take $R \subset \bK^+$), with $\partial R = \ell \cup r_1 \cup b \cup r_2$ where:
\begin{itemize}
    \item $r_i$, $i \in \{0,1\}$, are arcs in $\bK^+$;
    \item $b = R \cap \partial \B \subset \bK^+$ is a segment with $\partial b = \{ q_1 , q_2 \}$;
    \item $r_i$ is glued to $\ell$ at $p_i$, $i \in \{0,1\}$; and
    \item $r_i$ is glued to $b$ at $q_i$, $i \in \{0,1\}$.
\end{itemize}
We say $\ell \,(\subset \LL)$ is $\partial$-close with respect to $\bK^+$, or just $\partial$-close, if $R$ exists such that ${\rm int}\,R \cap \LL = \varnothing$.

Similarly, let $\ell \subset \LL$ be a circle component.  Let $\ell \subset \partial \bar{K}$ where $\bar{K} \subset \bK^+ \ {\rm or} \ \bK^-$ is an annulus.  If ${\rm int}\,\bar{K} \cap \LL = \varnothing$ then we say $\ell$ is $\partial$-close with respect to $\bK^+$, or just $\partial$-close.

The salient feature of $\partial$-close is, in the annular case, we have a regular isotopy (by Lemma \ref{lemma: isotopies across R}) that positions $\bar{K} \cap \partial \B$ arbitrarily close to $\ell$.  In the case where $R$ is a rectangular region we can take a half-disk, $R^\prime \subset R$, for which $\partial R^ \prime \setminus b$ is arbitrarily close to $r_1 \cup \ell \cup r_2$.  Then an application of Lemma \ref{lemma: isotopies across R} to $R^\prime$ positions $\partial \B$ arbitrarily close to $\ell$.

\begin{figure}[ht]

\labellist
\small
\pinlabel $\ell$ [r] at 130 105
\pinlabel $R$ [l] at 195 81
\endlabellist

\centering
\includegraphics[width=1.0\linewidth]{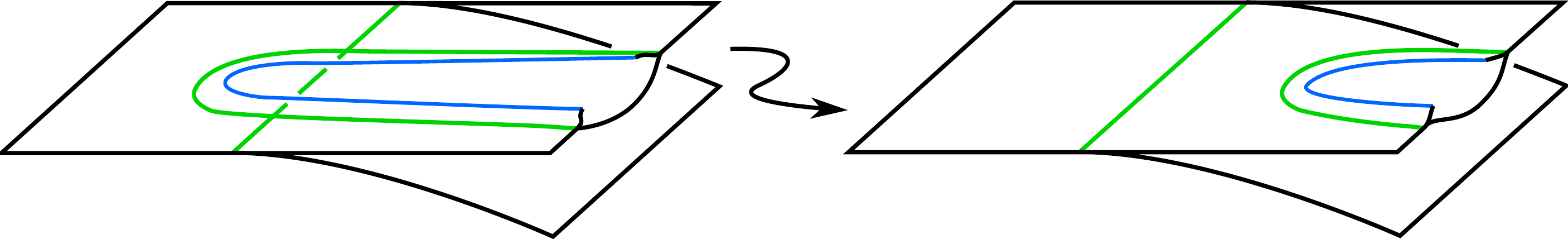}
\caption{Pinching along a portion of $R$ to remove two double points along the {\green green} $\partial$-close arc $\ell$.}
\label{Fig: boundary close pinching}
\end{figure}

Now, let $R \subset \bar{K} \,(\subset \bK^\pm)$ be either a disk or an annulus.  Suppose $R$ is an annulus with $\partial R = \ell \cup b$, where $\ell \subset \LL$ is a circle component and $b \subset {\rm int}\,{\bar{K}}$; we say $R$ is an {\em $\LL$-annulus}.  For $R$ a disk, suppose we may write $\partial R = \ell \cup b $ where $\ell \subset \partial \LL$ is an segment and $b \subset \bar{K}$ is a properly embedded arc---$\partial b = \partial \ell \subset \partial \LL$---and moreover, $(\LL \setminus {\rm int}\,{\ell} ) \cup b$ is a smooth $1$-manifold.  Then we say $R$ is an {\em $\LL$-disk}.  We then have the following lemma.

\begin{lemma}
\label{lemma: boundary close pinching}
Let $R \subset \bar{K} \,(\subset \bK^\pm)$ be either an $\LL$-annulus or $\LL$-disk.  Let $\ell \subset \partial R$ be its associated boundary circle or segment in $\LL$.  If $\ell$ is $\partial$-close with respect to $\bK^\mp$ then we can pinch $\B$ along $\ell$ so as to replace $\ell$ in $\LL$ with $b$.  (See Fig.~\ref{Fig: boundary close pinching}.)
\end{lemma}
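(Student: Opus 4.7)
The plan is to realize the desired move as a pinching of $\B$ along $\ell$ in the sense of \S\ref{subsubsection: pinching and double-cusp disks}, with a pinching 3-ball $B_\Sigma$ having $R$ itself as one of its bounding faces. For concreteness suppose $R\subset\bK^+$, so the $\partial$-close hypothesis is on the $\bK^-$ side; I detail the $\LL$-disk case, as the $\LL$-annulus case follows from the same construction with segments and disks replaced by circles and annuli (and $B_\Sigma$ replaced by a solid-torus pinching region foliated by the evident $I$-fibers).

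Set $\delta^+:=R$ and $d^+:=b\subset\bK^+$, so $\partial\delta^+=\ell\cup d^+$. To construct the opposite disk $\delta^-\subset\bK^-$, first apply Lemma \ref{lemma: isotopies across R} to a half-disk contained in the $\partial$-close rectangle $R'\subset\bK^-$; this regular isotopy pushes $R'\cap\partial\B$ arbitrarily close to $\ell$. After the push, the segment $\ell$ is flanked on the $\bK^-$ side by a thin, branching-free sheet of $\bK^-$. Using this sheet together with the fact that $\bK^-$ lies above $\p(R)$ as the opposite face of the $I$-bundle structure on $M_S$, pick an arc $d^-\subset\mathrm{int}(\bK^-)$ with $\partial d^-=\partial\ell$ and $\p(d^-)=\p(d^+)$, and let $\delta^-$ be the disk in $\bK^-$ cobounded by $d^-$ and $\ell$. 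Since $\p(d^+)=\p(d^-)$, the pair $(d^+,d^-)$ bounds a double-cusp disk $\td$, yielding the sphere $\Sigma=\delta^+\cup\delta^-\cup\td$ and the 3-ball $B_\Sigma$ with its natural $I$-bundle structure as in \S\ref{subsubsection: pinching and double-cusp disks}.

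The main technical point, which I expect to be the principal obstacle, is to verify $\mathrm{int}(B_\Sigma)\cap\B=\varnothing$, so that the $I$-bundle of $B_\Sigma$ genuinely supplies a regular isotopy of $\B$. The $\LL$-disk hypothesis gives $\mathrm{int}(R)\cap\LL=\varnothing$, and the $\partial$-close hypothesis gives $\mathrm{int}(R')\cap\LL=\varnothing$; combining these with the compactness of $R$ and the embeddedness of $\B$, one chooses the push-off of $\partial\B$ (and the resulting location of $d^-$) narrow enough that no third sheet of $\B$ intrudes into $B_\Sigma$. Granted this disjointness, the pinching isotopy of \S\ref{subsubsection: pinching and double-cusp disks} slides $\ell$ through $\delta^+=R$ onto $d^+=b$, producing the regular isotopy of $\B$ that replaces $\ell\subset\LL$ by $b$, as claimed.
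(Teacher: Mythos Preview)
Your attempt to force this lemma into the double-cusp disk framework of \S\ref{subsubsection: pinching and double-cusp disks} has a genuine geometric gap. You set $\delta^+=R$, $d^+=b$, and then seek $d^-\subset\bK^-$ with $\p(d^-)=\p(b)$ and $\partial d^-=\partial\ell$, so that $\delta^-$ is cobounded by $d^-$ and $\ell$. For this, $\delta^-$ must lie in the unique component $\bar K''\subset\bK^-$ having $\ell$ on its boundary---this is the \emph{other arm of the fold} at $\ell$. But after your boundary push, $\bar K''$ near $\ell$ is only the arbitrarily thin strip between $\ell$ and $\partial\B$; it certainly does not project over $\p(b)$ when $b$ is far from $\ell$, and there is no hypothesis guaranteeing it did so before the push either. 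Your appeal to ``$\bK^-$ lies above $\p(R)$ as the opposite face of the $I$-bundle'' refers to a \emph{different} sheet of $\bK^-$: the bottom of the $I$-fibers through $R$ is some $\bar K'''$, but $\bar K'''$ does not have $\ell$ on its boundary (near $\ell$, $R$ and $\bar K''$ are the two arms of the negative fold and bound \emph{separate} $I$-fibers). So there is no disk $\delta^-\subset\bK^-$ with $\partial\delta^-=\ell\cup d^-$ and $\p(d^-)=\p(b)$, and your pinching $3$-ball $B_\Sigma$ cannot be assembled as described.

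The paper's argument sidesteps this entirely by \emph{not} invoking the double-cusp machinery. It first pushes $\partial\B$ close to $\ell$ (so the $\bK^-$ arm $\bar K''$ is a thin strip), and then pinches $\ell$ directly through $R$ to $b$, dragging the thin strip of $\bar K''$ along for the ride. The point is precisely that the strip is arbitrarily thin, so as $\ell$ sweeps across $R$ the attached strip stays in an arbitrarily small neighborhood of $R\subset\B$ and cannot collide with the rest of $\B$. This is a direct regular isotopy of $\B$ and does not require producing any $d^-$ with matching projection.
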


\begin{proof}
For simplicity of notation, assume $\ell \subset \bar{K} \subset \bK^+$.  Then $\ell$ is $\partial$-close with respect to $\bK^-$ and we can perform a regular isotopy of $\B$ such that $\ell$ is arbitrarily close to either a component of $\partial \B$ (when $R$ is an annulus) or a segment of $\partial \B$ (when $R$ is a disk).  We can then pinch $\LL$ through $R$ and extend the isotopy to the portion of $\bK^-$ that is close to $\ell$; importantly, since this is an arbitrarily small strip of $\bK^-$, we do not need to worry about interactions with the remainder of $\B$.  Fig.~\ref{Fig: boundary close pinching} illustrates the pinching operation.
\end{proof}

\subsubsection{Emptying out a $B_\Sigma$ ball}
\label{subsubsection: emptying B-Sigma}
The final operation we now discuss will allow us to assume that the interior of the $B_\Sigma$ $3$-balls introduced in \S\ref{subsubsection: pinching and double-cusp disks} have empty intersection with $\B$.  In that discussion we were given a double cusp disk, $\td$, and we assumed that there was an associated pinching $3$-ball.  If we restrict to embeddings of $2$-spheres, then the bounded component of $\bR^2 \times \bR \setminus (\e(S^2) \cup \td)$ is always a $3$-ball.  When we consider the situation where $\cC$ has only three components, as we will in \S \ref{section: classification set-up}, the condition that $\p(B_\Sigma) = \p(\delta^+) = \p(\delta^-)$ will be almost immediate.  As such, we have the following lemma.

\begin{lemma}
\label{lemma: emptying B-Sigma}
Let $B_\Sigma$ be a pinching $3$-ball as described in \S\ref{subsubsection: pinching and double-cusp disks}.  Let $\td$ be the associated double cusp disk and $\Sigma$ be the associated $2$-sphere.  Then by a regular isotopy of $\B$ we can assume that ${\rm int}\,{B_\Sigma} \cap \B = \varnothing$.
\end{lemma}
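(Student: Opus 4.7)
The plan is to use the $I$-bundle structure of $B_\Sigma$ to sweep out any sheets of $\B$ lying in ${\rm int}\,B_\Sigma$, pushing them across $\delta^+$ or $\delta^-$ by a sequence of regular isotopies that preserve the $I$-fibration of $M_S$ (and therefore the critical set $\cC$).

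First I would analyze $\B \cap {\rm int}\,B_\Sigma$. At every point of $\B \setminus (\partial\B \cup \LL)$, the differential $d\p$ is an isomorphism on the tangent plane, so every component $S$ of $\B \cap {\rm int}\,B_\Sigma$ is transverse to the vertical $I$-fibers of $B_\Sigma$. Consequently $S$ meets each $I$-fiber in at most one point, $\p|_S$ is a local diffeomorphism onto a subregion of $\p(B_\Sigma) = \p(\delta^\pm)$, and (by compactness together with the regular-embedding hypothesis) there are only finitely many such components. Because $\delta^+$ and $\delta^-$ lie in the smooth non-branching locus of $\B$, no sheet $S$ can terminate in ${\rm int}\,\delta^\pm$ without introducing additional branching there; hence $\partial \bar{S} \subset \td$.

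Next I would carry out an inductive push. Order the components of $\B \cap {\rm int}\,B_\Sigma$ by proximity (along the $I$-fibration) to $\delta^+$, and choose an innermost one $S_0$. The subregion $R_0 \subset B_\Sigma$ lying between $S_0$ and $\delta^+$ is then a fibered sub-ball of $B_\Sigma$ whose intersection with $\B$ is exactly $S_0 \cup \delta^+$. Flowing $S_0$ along the $I$-fibers of $R_0$ to $\delta^+$ and then slightly past yields an ambient isotopy of $\e(S^2)$ that carries $S_0$ out of $B_\Sigma$. Since this isotopy is supported inside the $I$-fibration of $M_S$, it preserves that fibration and hence the critical set of $\p\circ\e$; by the discussion of \S\ref{subsection: branched surface} it descends to a regular isotopy of $\B$ of the required sort.

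Iterating this procedure on each successive innermost sheet eliminates every component of $\B \cap {\rm int}\,B_\Sigma$ in finitely many steps. The main subtlety I anticipate is ensuring that no intermediate push creates new branching or perturbs $\LL$ or $\partial\B$; this is handled by always operating within the current innermost fibered sub-ball (which by construction is disjoint from the remainder of $\B$) and by performing each isotopy strictly along $I$-fibers, so that $\cC^+ = \partial\B$ and $\cC^- = \LL$ are left invariant throughout the process.
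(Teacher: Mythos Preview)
There is a genuine gap in your approach. You propose to push each innermost sheet $S_0 \subset \B \cap {\rm int}\,B_\Sigma$ vertically along the $I$-fibers of $B_\Sigma$ and out through the face $\delta^+$. The difficulty is that $\delta^+$ is itself a sheet of $\B$: it is one of the two disks in $\bK^\pm$ that, together with $\td$, bound $B_\Sigma$. An isotopy of the embedded branched surface cannot carry one sheet of $\B$ across another; and working upstairs on $\e(S^2)$ does not help, since the preimages of $S_0$ and $\delta^+$ are disjoint pieces of the embedded sphere, so flowing one through the other destroys injectivity of $\e$. Moreover, by your own conclusion $\partial\bar S_0 \subset \td$, so $S_0$ is anchored to the rest of $\B$ through the vertical face; a purely vertical push can only slide $\partial S_0$ within $\td$ toward $d^+ \subset \delta^+$, never past it. (Incidentally, the claim that ${\rm int}\,\delta^\pm$ lies in the non-branching locus of $\B$ is also not justified: $\delta^\pm$ are disks in $\bK^\pm$, but their images in $\B$ may well meet $\LL$.)

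The paper resolves this by pushing in the opposite direction---horizontally, out through $\td$, which is the one face of $\partial B_\Sigma$ that is \emph{not} contained in $\B$. Concretely, a collar $N(\td)=\td\times(-\varepsilon,\varepsilon)$ is adjoined on the far side of $\td$, the disk bundle $\{D_t\}$ of $B_\Sigma$ is extended over $B'=B_\Sigma\cup N(\td)$, and a single flow in each leaf $D'_t$ (inward along $\ell'\subset\LL$, outward along $d'_t$) sweeps all of $\B\cap{\rm int}\,B_\Sigma$ into the collar. No innermost-sheet induction is needed, and since each $D'_t$ projects diffeomorphically under $\p$, regularity of $\p|_{\B}$ is preserved throughout.
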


\begin{proof}
As a technical point we take an small collar neighborhood of $N(\td) = \td \times (-\varepsilon,\varepsilon)$ where $N(\td) \cap B_\Sigma$ is supported only over the $[0,\varepsilon)$ interval.  We then extend the $D_t$-disk bundle structure of $B_\Sigma$ to $B^\prime = B_\Sigma \cup N(\td)$.  We will use the notation $$B^\prime = \bigsqcup_{t \in [-1,1]} D^\prime_t$$ for the resulting bundle structure.  Similarly extending the notation of \S \ref{subsubsection: pinching and double-cusp disks}, we have $\partial D^\prime_t = \ell^\prime \cup d^\prime_t$ where $\ell \subset \ell^\prime \subset \LL$---an extension of $\ell$---and $d^\prime_t \subset \td \times \{-1\}$.

Then we choose a non-zero vector field flow, $\phi_s, \ s \in (-1, 1)$, on ${\rm int}\,{B}^\prime$ such that each restricted flow $\phi_s|_{{\rm int}\,{D}^\prime_t},\ t \in [-1,1]$ has an extension to a flow $\phi_{s,t}$ such that on $\ell^\prime$, $\phi_{s,t}$ is an inward pointing normal vector field and on $d^\prime_t$, it is an outward pointing normal vector field.

With this setup, we use $\phi_s$ to perform a smooth isotopy moving $\B \cap {\rm int}\,{B}_\Sigma$ to lie in the $(-\varepsilon,0)$ interval support of $N(\td)$.  Such a isotopy will necessarily preserve the regular points of $\p : \B \rightarrow \bR^2$.
\end{proof}

\section{Classification of $S^2$ embeddings when $|\cC| =3$}
\label{section: when |cC|=3}

In this section we give the classification of isotopy classes of regular embeddings of $S^2$ into $\bR^2 \times \bR$ having $\cC = \{ \gamma_i , \gamma_m , \gamma_o \}$---three smooth curves without corners.  Seemingly a simple case, we will see that the subtleties of what can occur are instructive for what may happen in the general setting.

To begin, it is readily seen that there are topologically two possible configurations of three disjoint curves on $S^2$: one where the curves split $S^2$ into three disks plus a pair-of-pants; and, one where the curves split $S^2$ into two disks and two annuli.  The former case is not allowable since, as previously observed, there is no Gauss-Bonnet weighting of $\cC$.  For the latter case, we let $\gamma_i$ and $\gamma_o$ (``$i$'' for ``inner'' and ``$o$'' for ``outer'') be the two curves bounding disks $\D_i$ and $\D_o$, respectively.  Since both curves bound disks, by Proposition~\ref{proposition: GB for barK}, their Gauss-Bonnet weightings most both be $+1$.  Additionally, we let $\gamma_m$ (``$m$'' for ``middle'') be the curve that co-bounds an annulus, $A_{i,m}$, with $\gamma_i$ and co-bounds an annulus $A_{o,m}$, with $\gamma_o$. Again, applying Proposition~\ref{proposition: GB for barK} we have $-1$ for the Gauss-Bonnet weight for $\gamma_m$.

\begin{theorem}
\label{theorem: mushroom-saucer-toric}
Let $\e: S^2 \into \bR^2 \times \bR$ be a regular embedding such that the crease set $\cC$ has three components with no corners.  Then $\e(S^2)$ is regularly isotopic to one of the following, up to the obvious (reflection) symmetries: the ``saucer'' (Fig.~\ref{Fig:saucervshroom1}), the ``mushroom'' (Fig.~\ref{Fig:saucervshroom2}), or the ``toric'' sphere (Fig.~\ref{Fig:saucervshroom3}).
\end{theorem}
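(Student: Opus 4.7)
The plan is to classify these embeddings by extracting a complete discrete invariant, showing it takes exactly three values up to reflection, and then establishing existence and uniqueness for each value. Because $\cC$ has no corners, each component $\gamma \in \cC$ carries a well-defined fold sign (\S\ref{subsection: Orientation of crease}), so the triple $(s_i, s_m, s_o) \in \{+,-\}^3$ is a regular-isotopy invariant of $\e$. Equivalently, each of the four components of $S^2 \setminus \cC$ --- the disks $\D_i, \D_o$ and the annuli $A_{i,m}, A_{o,m}$ --- carries a $\bK^\pm$-label (\S\ref{subsection: branched surface}), and two adjacent components lie in the same $\bK^\pm$ precisely when their shared crease is a negative fold. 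Fixing $\D_i \in \bK^+$, which is permissible up to the reflection $z \mapsto -z$, the $2^3 = 8$ remaining $\bK^\pm$-assignments will collapse, under the full group of reflection symmetries of $\bR^2 \times \bR$, to exactly three patterns, which I will identify with the saucer, mushroom, and toric sphere of Fig.~\ref{Fig:saucervshroom1}--\ref{Fig:saucervshroom3}.

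For existence I will produce an explicit representative of each pattern. The saucer and toric sphere are immediate outputs of the construction of Theorem~\ref{theorem: realization of crease set}, realized with $\p \circ \e(\cC)$ equal to three nested smooth circles of nowhere-vanishing curvature. The mushroom is the ``non-intuitive'' case flagged in the introduction and does not admit such a nowhere-zero-curvature realization; for it I will give a direct construction modeled on the Cheshire embedding of Fig.~\ref{Fig:Cheshire}, in which the middle crease curve's projection exhibits either a double point or a zero of curvature.

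The core of the proof is the uniqueness claim: two regular embeddings $\e_0, \e_1$ sharing a fold-sign pattern are regularly isotopic. My plan is to pass to the natural branched surfaces $\B_0, \B_1$ (\S\ref{subsection: branched surface}) and regularly isotope each to a common canonical model $\B^{\mathrm{can}}$ determined by the sign pattern, using the three moves of \S\ref{subsection: isotopies of B}: boundary-based retraction across a boundary-annular or boundary-half-disk region (Lemma~\ref{lemma: isotopies across R}); $\partial$-close pinching along an $\LL$-disk or $\LL$-annulus (Lemma~\ref{lemma: boundary close pinching}); and emptying of a pinching $3$-ball (Lemma~\ref{lemma: emptying B-Sigma}) so that a double-cusp disk may be realized as a regular splitting. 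The main obstacle is arranging this reduction to terminate at $\B^{\mathrm{can}}$. I will introduce a complexity such as $c(\e) = \#\{\text{double points of } \p \circ \e(\cC)\} + \#\{\text{components of } \LL\}$ and argue that, whenever $c(\e)$ exceeds its canonical value, at least one of the three reduction moves strictly decreases $c$. That such a reduction is always available in the non-canonical case will follow from a case analysis on the four-component decomposition $\D_i \cup A_{i,m} \cup A_{o,m} \cup \D_o$, together with the fact that every simple closed curve on $S^2$ separates; these two facts will locate an innermost reducible feature in the projection of any non-canonical $\B$.
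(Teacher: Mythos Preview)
Your overall strategy matches the paper's --- classify by fold-sign pattern, then reduce the branched surface $\B$ to a canonical model via the moves of \S\ref{subsection: isotopies of B} --- but there are two genuine gaps.

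First, the fold-sign bookkeeping is off. Adjacent components of $S^2\setminus\cC$ always lie in opposite $\bK^\pm$, independent of the fold sign of the shared crease; once $\D_i\in\bK^+$ is fixed there is exactly one $\bK^\pm$-assignment, not $2^3$. If instead you meant the $2^3$ fold-sign triples $(s_i,s_m,s_o)$, the allowed symmetries (isometries commuting with $\p$, plus self-homeomorphisms of $(S^2,\cC)$) preserve fold signs and can only swap $\gamma_i\leftrightarrow\gamma_o$. This reduces the six nontrivial triples to \emph{four} orbits, not three. The paper needs a separate argument (Lemma~\ref{Lem: eliminating f_4}) to exclude the assignment $[\gamma_i,\gamma_m,\gamma_o]\leftrightarrow[+,-,-]$, using that $\B$ is contractible but would acquire nontrivial $\pi_2$ in that case. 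Your plan contains no such step. Relatedly, you have the ``non-intuitive'' case misidentified: the mushroom has a perfectly nice concentric-circle model; it is the toric sphere whose branching locus is forced to have a double point and whose $\gamma_m$-projection cannot be everywhere convex.

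Second, your complexity will not drive the reduction. The term $\#\{\text{components of }\LL\}=|\cC^-|$ is a regular-isotopy invariant, hence constant, and the double-point count $|D(\LL)|$ can \emph{increase} under a double-cusp pinch (the paper exhibits a five-double-point mushroom example where the required pinch raises $|D(\LL)|$ to six). The paper's fix is a lexicographic pair $(N_1,N_2)$ that counts, over all components $X\in\bX$, how many vertices of $G=\bigsqcup X\cap\LL$ are not $\partial$-close versus $\partial$-close; the point is that double-cusp pinching always drops $N_1$ even when $N_2$ (and $|D(\LL)|$) rises, while $\partial$-close pinching drops both. Your proposal offers no substitute for this mechanism, so the termination argument is incomplete.
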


\begin{figure}[ht]
\centering

\begin{subfigure}[b]{.3\textwidth}
\centering
\includegraphics[width=.9\textwidth]{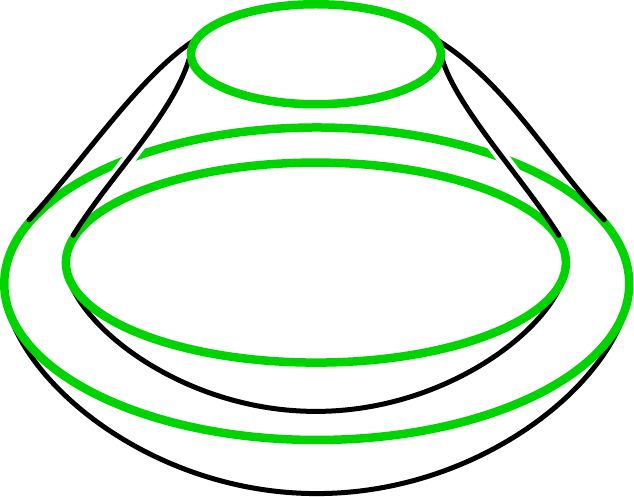}
\caption{Saucer.}
\label{Fig:saucervshroom1}
\end{subfigure}
\begin{subfigure}[b]{.3\textwidth}
\centering
\includegraphics[width=.65\textwidth]{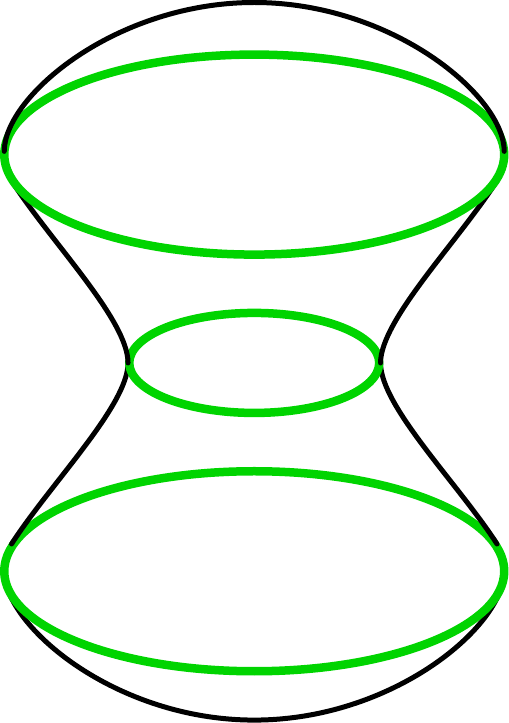}
\caption{Mushroom.}
\label{Fig:saucervshroom2}
\end{subfigure}
\begin{subfigure}[b]{.3\textwidth}
\centering
\includegraphics[width=\textwidth]{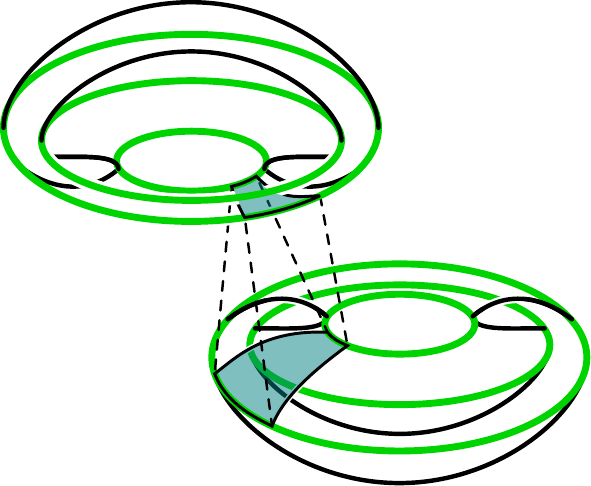}
\caption{Toric.}
\label{Fig:saucervshroom3}
\end{subfigure}
\caption{Model representatives of the three regular isotopy classes of embeddings of $S^2$ with three corner-less crease curves. The toric sphere is realized as a plumbing of two saucer spheres.}
\label{Fig:saucervshroom}
\end{figure}

To be precise about the meaning of obvious symmetry, we allow pre-composition by a homeomorphism of pairs $h:(S^2,\cC)\to(S^2,\cC)$ and post-composition by an isometry $i:\bR^2\times\bR\to\bR^2\times\bR$ satisfying $\p\circ i=i|_{\bR^2\times\{0\}}\circ \p$. For instance, the saucer and its reflection through the $\bR^2\times\{0\}$-plane are not regularly isotopic, but are related by an obvious symmetry.

\subsection{Proof approach}
\label{subsection: sign of cC}

As remarked earlier, the folding data of the crease set are a regular isotopy invariant. Theorem~\ref{theorem: mushroom-saucer-toric} may be understood as saying that in the case $|\cC|=3$, this is a complete invariant---at most one isotopy class occurs per folding direction assignment---and only three folding direction assignments actually occur.

Initially the number of possible assignments for the folding signs is $2^{|\cC|} = 2^3$.  However, since $\partial \B \not= \varnothing$ and $\partial \B$ corresponds to $\cC^+$, some component of $\cC$ must be a $+$ crease.  Similarly, if there is no $-$ crease, then $\LL = \varnothing$ and $\B$ is in fact an embedded planar surface with boundary.  Since $\B$ must carry $\e(S^2)$, $\B$ is necessarily a disk.  But, then $|\cC| =1$, not $3$, a contradiction.  Thus, both $\cC^+$ and $\cC^-$ are nonempty. 

Next we observe that we can interchange of roles of $\gamma_i$ and $\gamma_o$, an allowed symmetry. This leaves us with four bijective maps of ordered sets that correspond to a folding assignment.
\begin{enumerate}
    \item[i.] $f_1 : [\gamma_i , \gamma_m , \gamma_o] \longleftrightarrow [+,+,-] $.
    \item[ii.] $f_2 : [\gamma_i , \gamma_m , \gamma_o] \longleftrightarrow [+,-,+] $.
    \item[iii.] $f_3 : [\gamma_i , \gamma_m , \gamma_o] \longleftrightarrow [-,+,-] $.
    \item[iv.] $f_4 : [\gamma_i , \gamma_m , \gamma_o] \longleftrightarrow [+,-,-] $.
\end{enumerate}

\begin{lemma}\label{Lem: eliminating f_4}
The folding assignment $f_4$ cannot occur.
\end{lemma}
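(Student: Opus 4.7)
The plan is to assume $f_4$ is realized by some regular embedding and derive a contradiction from the vertical stacking of sheets of $\e(S^2)$ over a neighborhood of $\gamma_m$ in $\bR^2$.

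First I would fix the $\bK^+/\bK^-$ coloring of the four planar components of $S^2\setminus\cC$, namely $\D_i$, $A_{i,m}$, $A_{o,m}$, $\D_o$. The $z$-component of the outward normal of $M_S$ is nonzero on each component and flips sign across every crease (the normal rotates through horizontal there), so the coloring alternates along the adjacency chain $\D_i$–$A_{i,m}$–$A_{o,m}$–$\D_o$; up to swapping, $\D_i,A_{o,m}\in\bK^+$ and $A_{i,m},\D_o\in\bK^-$. At a positive fold, outward normals at the two adjacent sheets point away from each other, placing the $\bK^+$ sheet on top; at a negative fold they point toward each other, placing the $\bK^-$ sheet on top. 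For $f_4=[+,-,-]$ this yields the pointwise height inequalities
\[
h_{\D_i} > h_{A_{i,m}}, \qquad h_{A_{i,m}} > h_{A_{o,m}}, \qquad h_{\D_o} > h_{A_{o,m}},
\]
each valid near its crease; since distinct planar components embed as disjoint sheets in $\bR^2\times\bR$, each inequality extends by continuity throughout the connected region over which the two components jointly project.

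Next I would produce a region in $\bR^2$ over which all four components project simultaneously. Let $F(p)$ denote the number of components of $\p^{-1}(p)\cap M_S$. Because $\gamma_m$ is a negative fold, the standard local model $M_S\approx\{x\ge -z^2\}$ around a point of $\gamma_m$ shows $M_S$ occupies a neighborhood of $\p\circ\e(\gamma_m)$ on \emph{both} sides, so $F\ge 1$ just off $\p\circ\e(\gamma_m)$ on the non-fold side; crossing to the fold side contributes one additional fiber component, giving $F\ge 2$ there. A fiber with $F=2$ meets $\e(S^2)$ in four sheets, and because each planar component maps diffeomorphically to its projection and hence contributes at most one sheet to any fiber, having only four components in total forces $\D_i, A_{i,m}, A_{o,m}, \D_o$ all to project onto this neighborhood.

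The contradiction then comes from counting positions in the fiber. The topmost sheet is always in $\bK^+$—just above it one is outside $M_S$, so its outward normal points upward—hence the four sheets alternate $\bK^+,\bK^-,\bK^+,\bK^-$ from top to bottom. The chain $h_{\D_i}>h_{A_{i,m}}>h_{A_{o,m}}$ combined with the coloring forces $\D_i$ into position $1$, $A_{i,m}$ into position $2$, and $A_{o,m}$ into position $3$, leaving $\D_o$ at position $4$; but then $h_{\D_o}<h_{A_{o,m}}$, contradicting the $\gamma_o$ inequality. The step I expect to require the most care is ensuring that the $F\ge 2$ neighborhood on the fold side of $\gamma_m$ simultaneously lies in each of the three mutual-projection regions, so that all three height inequalities apply at a common point; this in turn reduces to showing that the projections of $A_{i,m}$ and $A_{o,m}$ are contained in the projections of $\D_i$ and $\D_o$ respectively, which follows from $\gamma_i$ being the silhouette of the compact $M_S$ and from $\p(M_S)$ being bounded.
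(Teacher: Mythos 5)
Your approach is genuinely different from the paper's. The paper works entirely inside the associated branched surface $\B$, the quotient of $M_S$ by its vertical $I$-fibers: since $\gamma_i$ is the sole positive crease, its image is all of $\partial\B$ and bounds a disk $\D\subset\B$ (the image of $\D_i$); any essential loop of the branching locus then lets one assemble a non-nullhomotopic map $S^2\to\B$, contradicting the contractibility of $\B$ (a quotient of a $3$-ball). Your argument instead works directly with the vertical ordering of the sheets of $\e(S^2)$ over a fiber. This is more elementary in spirit, but it exposes itself to exactly the kind of global geometry the branched-surface formalism is built to sidestep, and as written it has two real gaps.

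First, the assertion that ``each planar component maps diffeomorphically to its projection and hence contributes at most one sheet to any fiber'' is not justified. Having no critical points only makes $\p\circ\e$ restricted to each component an \emph{immersion}, not an embedding. The same component can perfectly well meet a single fiber twice (then, by disjointness of $\e(S^2)$ and the Jordan-type parity argument you invoke, some other sheet must separate the two, but that other sheet could again come from the same component or from one of the annuli). Without injectivity, the height functions $h_\K$ are multi-valued, the ``topmost sheet is in $\bK^+$'' alternation does not assign each of the four positions to a distinct named component, and the concluding inequality-chase does not go through. In the $F\geq 2$ region near $\gamma_m$ you can only pin down two of the four sheets (those continuing off $\gamma_m$ into $A_{i,m}$ and $A_{o,m}$); the other two need not be $\D_i$ and $\D_o$.

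Second, even granting one sheet per component, the three inequalities hold a priori only near their respective creases, on whichever connected component of the relevant mutual-projection region contains that crease neighborhood. You flag this and assert it reduces to ``projections of $A_{i,m}$, $A_{o,m}$ contained in projections of $\D_i$, $\D_o$,'' citing $\gamma_i$ as the silhouette. But $\p\circ\e(\gamma_i)$ is only an immersed curve with possible double points, so ``silhouette'' is not a simple closed curve bounding $\p(M_S)$, and the containment claim, as well as the connectivity of the common-projection regions between the $\gamma_i$- (resp.\ $\gamma_o$-) neighborhood and the $\gamma_m$-neighborhood, is not established. These are exactly the kind of global statements that do not follow from local models. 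The paper's route avoids both issues by replacing all of this vertical stacking data with the topology of the branched surface, where contractibility is free and the obstruction is a clean $\pi_2$ computation.
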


\begin{proof}
Suppose $\e:S^2\rightarrow \bR^2\times\bR$ realizes this folding assignment. Observe that $\gamma_i$, as the single crease curve with positive folding, bounds an embedded disk $\D$ in $\B$, and moreover $\partial \B=\partial \D$. Then $\B\setminus \D$ is a branched subsurface of $\B$. The boundary of $\B\setminus\D$ is precisely $\LL\cap\D$. In particular, since $\D$ is a disk, every loop $\ell\subset\partial(\B\setminus\D)$ can be realized as the image of the boundary of some map $f: D^2\to \D$.

Suppose $\ell$ is homotopically nontrivial in $\partial(\B\setminus\D)$. If $\ell$ is nullhomotopic in $\B\setminus \D$, then there is a second map $f':D^2\to \overline{\B\setminus\D}$ mapping the boundary to $\ell$. Together these two maps define $f\cup f': S^2\to \B$, which is evidently non-nullhomotopic.

If $\ell$ is essential in $\B\setminus\D$, it is homotopic to another loop $\ell'\subset\partial(\B\setminus\D)$. Let $g:S^1\times[0,1]\to\overline{\B\setminus\D}$ realize this homotopy. We also have $f':D^2\to \D$ mapping $\partial D^2$ to $\ell'\subset \D$. Then $f\cup g \cup f':S^2\to \B$ represents a nontrivial class in $\pi_2(\B)$.

Since $\B$ is the quotient of a 3-ball, $\B$ is contractible. However, we have shown $\pi_2(\B)\ne 1$, a contradiction.
\end{proof}

Our approach to proving Theorem~\ref{theorem: mushroom-saucer-toric} is to show, in each of the remaining three cases of folding assignments, the branched surface associated to any embedding with given folding data may be taken by a regular isotopy to one of the following three models.

\subsubsection*{Saucer--folding assignment $f_1: [\gamma_i , \gamma_m , \gamma_o] \longleftrightarrow [+,+,-]$}

The branched surface model associated with the saucer embedding has $\p \circ \e(\gamma_i \sqcup \gamma_m  \sqcup \gamma_o) \ (= \p (\partial \B \sqcup \LL))$ as three concentric circles in $\bR^2$ with $\p \circ \e(\gamma_i)$ the largest radius and $\p \circ \e (\gamma_m)$ the smallest radius.  

We take $\bK^+$ to be the images of $\{ A_{i,m} , \D_o \}$ and for convenience of notation relabel their images $\{A^+, \D^+ \} \subset \B$, respectively. Similarly, we take $\bK^-$ to be the images of $\{ A_{m,i}, \D_i\}$ and relabel their images $\{ A^- , \D^-\} \subset \B$, respectively. To satisfy the folding assignment, we take the $z$-support of $A^+$ and $\D^\pm$ in $\bR^2 \times \bR$ to be $\{0\}$, and the $z$-support of ${\rm int}\,{A}^-$ to be $(0, \frac{1}{2})$. 

\subsubsection*{Mushroom--folding assignment $f_2: [\gamma_i , \gamma_m , \gamma_o] \longleftrightarrow [+,-,+]$}

The branched surface model associated with the mushroom embedding will have $\p \circ \e(\gamma_i \sqcup \gamma_m  \sqcup \gamma_o) = \p(\partial \B \sqcup \LL) \subset \bR^2$ again be three concentric circles with $\p \circ \e(\gamma_i)$ the largest radius and $\p \circ \e (\gamma_m)$ the smallest radius.  

We take $\bK^+$ to be the images of $\{ A_{i,m} , \D_o \}$ and relabel their images $\{A^+, \D^+ \} \subset \B$, respectively. Similarly, we take $\bK^-$ to be the images of $\{ A_{m,i}, \D_i$\} and relabel their images $\{ A^- , \D^-\} \subset \B$, respectively. Again, in order to satisfy the folding assignment, we take the $z$-support of $A^+ $ and $\D^-$ to be $\{0\}$, and the $z$-support of ${\rm int}\,{A}^-$ to be $(0, \frac{1}{2})$.

\subsubsection*{Toric--folding assignment $f_3: [\gamma_i , \gamma_m , \gamma_o] \longleftrightarrow [-,+,-]$}

To construct the branched surface model associated with this toric embedding,  we start with $S^1 \vee S^1 \subset \bR^2 \times \bR$ such that  $\p(S^1 \vee S^1) \subset \bR^2$ looks like the standard two crossing projection of the Hopf link except that one crossing is now the common point of the wedge sum; this choice, as well as the choice of over-strand for the intact crossing, is a matter of symmetry.  Next, let $T \subset \bR^2 \times \bR$ be a torus-minus-open-disk which contains and deformation retracts onto our $S^1\vee S^1$.  Further, we require that $\p({\rm int}\,T) \subset \bR^2$ be an immersion.  Then, adapting Proposition \ref{proposition: GB for barK}, observe the turning number of $\partial T$ must be $-1$.  As such we let $\partial T = \e(\gamma_m)$.  Finally, to construct our $\B$, we attach to $T$ two disks, $\D^+$ and $\D^-$, one along each of the $S^1 {\rm 's}$ in our wedge sum, so that $\p(\D^\pm) \subset \bR^2$ is an embedding.  To specify, we identify $\partial \D^+$ (respectively, $\partial \D^-$) with the under-crossing (respectively, over-crossing) $S^1$-factor of the remaining crossing in the Hopf link projection. From this construction we have that $\partial \D^+$ and $\partial \D^-$ are the images of $\gamma_i$ and $\gamma_o$ in $\B$. The boundary and branch locus of $\B$ are shown in Fig.~\ref{Fig: plumbing}. 

\begin{figure}[ht]
\centering

\includegraphics[width=.35\textwidth]{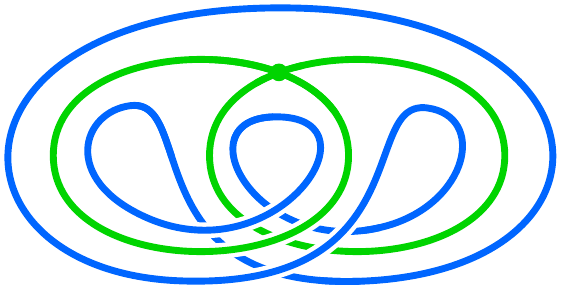}
\caption{The crease set of a toric sphere as identified in the branched surface. The {\green green} $\cC^-$ curves have been identified at a single point in $\LL$.}
\label{Fig: plumbing}
\end{figure}

\subsection*{Summary of the 3 models.}
We summarize the ``geography'' of each of our three models.  Referring back to Fig.~\ref{Fig: carries}, a choice of splitting into $\bK^+$ or $\bK^-$ produces a $2$-disk and an annulus, independent of model.  A salient feature of any such splitting will be the ``membership'' of the boundary components of the resulting disk/annulus pair---a boundary component will either come from a component of $\partial \B$ or a component of $\LL$.  Table \ref{Table: geography} categorizes the membership of each possible $K^\pm$ surface.  Superscripts of $\pm$ correspond to the superscripts associated with the splitting and superscripts, where used, correspond to the membership of the boundary component.

\begin{table}[ht]
\begin{minipage}{\textwidth}
    \centering
    
    \caption{Geography of the three models.}
    \label{Table: geography}
\begin{tabular}{@{}clll@{}}
\toprule
\ $\bK^\pm$\  & \ $f_1$ (saucer)\  & \ $ f_2$ (mushroom)\     & \ $f_3$ (toric)\  \\
\midrule
\ $\Delta^+$\  & \ $\partial \Delta^+ \subset \LL$\  & \ $\partial \Delta^+ \subset \partial \B$\  &  \ $\partial \Delta^+ \subset \LL$\ \\

\ $A^-$\  &  \ $\partial A^- = \partial_\B^- \cup \partial_\LL^-$\  &  \ $\partial A^- = \partial_\B^- \cup  \partial_\LL^-$ \  &  \ $\partial A^- = \partial_\B^- \cup \partial_\LL^-$\ \ \\

\ $A^+$\  &  \ $\partial A^+ \subset \partial \B$\  &  \ $\partial A^+ = \partial_\B^+ \cup  \partial_\LL^+$\   &  \ $\partial A^+ = \partial_\B^+ \cup \partial_\LL^+$\ \\

\ $\Delta^-$\  &  \ $\partial \Delta^- \subset \partial \B$\  &  \ $\partial \Delta^- \subset \partial \B$\  &  \ $\partial \Delta^- \subset \LL$\ \\
\bottomrule
\end{tabular}
\end{minipage}

\end{table}

\subsection{Proof of Theorem \ref{theorem: |cC| = 3}}
\label{subsection: proof of |cC| = 3 theorem}

We will establish that any regular embedding $\e(S^2)$ with $|\cC| = 3$ is regular isotopic to either the saucer, mushroom, or toric embedding (up to symmetry) by showing its associated branched surface, $\B \subset \bR^2 \times \bR$, is equivalent to one of our three model branched surfaces through a sequence of boundary-based regular isotopies (see \S \ref{subsubsection: boundary based isotopies}), $\partial$-close pinching (see \S \ref{subsubsection: pinching boundary close}), and pinching double-cusp disks (see \S \ref{subsubsection: pinching and double-cusp disks}). Applying appropriate symmetries, we make the following assumptions on a given embedding to match our models: in all cases, $\D_i$ will always correspond to $\D^-$; any $f_1$-folding embedding will have $\partial\D^+\subset\LL$; and, in any $f_3$-folding embedding, the signed intersection number of the image in $\B$ of the ordered pair $(\gamma_i .\gamma_o)$ is $-1$. 

To give justification for why, in an $f_3$-folding, $\LL$ must always contain at least one double point, we consider the surface with boundary $T = \B \setminus \{ {\rm int}\, \Delta^+ \cup {\rm int}\, \Delta^- \}$.  Since $\partial T = \partial \B$, the turning number of its boundary is $-1$.  Moreover, since every point in ${\rm int}\, T$ is regular with respect to $\p: T \rightarrow \bR^2$, the crease set of $T$ is empty.  Thus, $T$ is a torus minus a disk.  But, the two components of $\LL$ are naturally contained in ${\rm int}\, T$ as homotopically non-trivial curves, with each component bounding an embedded disk---the disks $\Delta^\pm$.  Such curves will have algebraic intersection $\pm1$.

To power our argument we first define a complexity measure.  The setup is as follows.  When $\e(S^2)$ is associated with the $f_1$ folding assignment, we let $\bX = \{A^- , \D^+\}$.  For folding assignment $f_2$, we let $\bX = \{ A^+ , A^- \}$. And, for $f_3$ we let $\bX = \{ A^+, A^-, \D^+ , \D^- \} $. In each case, $\bX$ is comprised of those components with some boundary component in $\LL$.

With this in place, we consider the graph $G = \bigsqcup_{X \in \bX} X \cap \LL$, where $X \in \bX$ is a single component. (We allow for the possibility that $G$ has $S^1$-``edges'', which occur as isolated components of $G$; these occur when some component of $\LL$ has no double points.) Let $D(\LL)$ denote the set of double points of $\LL$ and $V_G$ the vertex set of $\LL$. Each $v\in V_G$ corresponds to a point of $D(\LL)$, and by our choices of $\bX$, any double points of $\LL$ will be captured as vertices in $G$. Referring back to Fig.~\ref{Fig: neighborhoods of B} and \ref{Fig: carries}, observe that each component of $\LL\setminus D(\LL)$ appears as an edge at most four times in $G$ while each double point of $\LL$ appears as a vertex at most six times in $G$. Both bounds are realized when considering all four components of $\K^\pm$. 

For a component $X \in \bX$, we say a vertex $v \in V_G\cap \partial X$ is {\em $\partial$-close in $X$} if there exists a segment neighborhood, $\ell \subset \partial X \cap \LL$, such that $v \in \ell$ and $\ell$ is $\partial$-close as defined in \S \ref{subsubsection: pinching boundary close}.  Let $N_2$ be the number of vertices of $V_G$ that are $\partial$-close in some component of $\bX$ and $N_1$ the number of vertices of $V_G$ which are not.  Then our complexity measure for a branched surface, $\B \subset \bR^2 \times \bR$, is the lexicographically ordered $2$-tuple,  $\chi(\B) = (N_1, N_2)$.

Since $D(\LL) = \varnothing$ for our saucer and mushroom standard branched surface models, the minimal possible complexity for folding assignments $f_1$ and $f_2$ is $(0,0)$ with $G = \partial \bX \cap \LL$.

The complexity for the toric model is $(6,0)$, which we calculate for each component of $\bX$ as follows.  $\D^\pm \cap \LL$ is just the $\partial \D^\pm$ with each containing a single vertex of $G$ and, since $\D^\pm \cap \partial \B = \varnothing$, these single vertices will not be $\partial$-close in $D^\pm$.  $A^\pm \cap \LL$ will be subgraphs having three edges, $e_1^\pm , e_2^\pm , e_3^\pm$ and two vertices, $v_1^\pm , v_2^\pm$, with: $e_1^\pm \cup v_1^\pm = \partial A^\pm \cap \LL $, $e_2^\pm \cup v_2^\pm$ a loop bounding a disk in ${\rm int}\,{A}^\pm$, and $e_3^\pm$ having endpoints $v_1^\pm$ and $v_2^\pm$.  As such, $v_1^\pm$ and $v_2^\pm$ are not $\partial$-close in $A^\pm$.

This is indeed the minimum possible complexity for $f_3$, as there must be at least one double point in $\LL$, which is necessarily not $\partial$-close anywhere it appears in $G$.

We note that the subgraphs in $A^\pm$ in the toric model are informative.  Having an edge loop in $G$ implies that the number of components of $\LL$ is greater than $1$.  Similarly, having $e_3^\pm$ as ``cut'' edges implies that $|\LL| \geq 1$---$e_3^\pm$ share a vertex with $e_1^\pm$ and $e_2^\pm$ so in the quotient they cannot be identified with either edge. 

\subsection*{Case 1: $\chi(\B)$ is minimal.}
For the all three folding assignments, we can perform a boundary based isotopy that positions $\partial \B$ arbitrarily close to $\LL$.

For the saucer and mushroom cases, we pick a point $p$ in the interior of the disk component of $\B \setminus \LL$.  We can then choose a small enough neighborhood, $N(p)$, such that $C_r =\p(\partial N(p)) \subset \bR^2 \times \{0\}$ is circle of some fixed radius.

Now, pick a vector flow on the closure of disk component of $\B \setminus \LL$ that flows inward from $\LL$ to $C_r$.  We then use this flow to isotopy the neighborhood of $\LL$ that contains the annular components of $\B \setminus \LL$ to a neighborhood of $C_r$.  After this isotopy we will have the projection of the resultant branched surface, $\p(\B^\prime)$, be one that corresponds the that of our saucer or mushroom models.  That is, $\p(\partial \B^\prime \sqcup \LL)$ will correspond to three concentric circles.  We can then employ a linear isotopy that positions $\B^\prime$ so as to satisfy the $z$-support conditions of our saucer and mushroom models.  If $\D^-$ projects to $\p(\D^-)$ with $\partial \p(\D^-)$ being the outermost concentric circle, point-wise we could use $H_t(p) = (1-t) [p] + t[\p(p)]$, $p \in \D^-$, $0\leq t \leq 1$, and extend to the rest of $\B^\prime$.  Being linear, it is evident that $dH_t : T_p \B^\prime \to \bR^2 $, $0 \leq t \leq 1$ is an isomorphism on $p \in \B \setminus \partial \B$.

For the toric case we pick two arbitrarily small circles $S^+ \subset \D^+$ and $S^- \subset \D^-$ such that $S^+ \cap S^- = D(\LL)$ and $\p(S_1 \cup S_2)$ corresponds to a ``nice Hopf link'' projection with a wedge point and a single crossing point.

Next, we choose vectors flows on $\D^\pm$ that flow inward from $\partial \D^\pm \,(\subset \LL)$ to $S^\pm$.  We use these two flows to perform an isotopy moving $\LL$ to $S^+ \cup S^-$.  This isotopy extends to one moving the neighborhood of $\LL$ containing the annular component of $\B \setminus \LL$ to a small neighborhood of $S^+ \cup S^-$. This describes a regular isotopy of $\B$ to $\B^\prime$ such that the associated branching locus, $\LL^\prime$, has a nice Hopf link type projection.
Finally, let $T^\prime=\B^\prime \setminus ({\rm int}\,\D^{\prime+} \cup {\rm int}\,\D^{\prime -}$. (Here we are extending the ``prime'' notation to all of the component pieces of $\B^\prime$.) As in the saucer and mushroom cases, we can use a linear isotopy as a regular isotopy to take $T^\prime$ to the $T$ of the toric model, extending to take $\B^\prime$ to the toric model $\B$.

\subsection*{Case 2: $\chi(\B)$ is not minimal.}
Consider an arc $\ell \subset \cC^-$ whose image connects two double points of $\LL$, or $\ell$ an entire component of $\cC^-$.  By our definition of $\bX$ there will be two components, $X^+, X^- \in \bX$ such that $\ell \subset \partial X^\pm \cap \LL$.  (By way of example, in the mushroom case we necessarily have $\ell \subset \partial A^+ \cap \LL$ and $\ell \subset \partial A^- \cap \LL$.)  Let $v^\pm_1, \ldots , v^\pm_n \in \partial X^\pm \cap V_G$ be those vertices which lie along $\ell$ in $X^\pm$.  If one or both of $X^\pm$ is annular, then there is the possibility that $\ell$ is $\partial$-close in that component---though by the assumption that $\ell$ has double points, it cannot be $\partial$-close in both. We break into two cases: when $\ell$ is $\partial$-close in one of $X^\pm$, and when $\ell$ is not $\partial$-close in either. These correspond exactly to when we can use each of the pinching moves discussed in \S \ref{subsection: isotopies of B}.

\subsubsection*{Pinching $\partial$-close as in \S \ref{subsubsection: pinching boundary close}.} 
Suppose that $\ell$ is $\partial$-close in, say, $X^+$, but not in $X^-$---there exists a connected subgraph, $J \subset G$, such that $J \cap \partial X^- = \{ v^-_1, \ldots , v^-_n \}$ and either $X^-=\D^-$ or $J\cap {\rm int}\, X^-$ is nonempty. We further suppose $J\cap {\rm int}\, X^-$ is connected, possibly by passing to a subarc of $\ell$, and that $\ell$ is maximal in the sense that either $G\cap X^-\setminus\{v^-_1,v^-_n\}$ is disconnected or $\ell\cong S^1$. Then a neighborhood $R(J) \subset X^-$ defines either an $\LL$-disk or $\LL$-annulus. We then perform the pinching procedure of \S \ref{subsubsection: pinching boundary close}, replacing $\B$ with the resulting $\B^\prime$. This eliminates the $2n$ points $v_1^\pm,\ldots,v_n^\pm$ from $D(\LL)$, reducing $N_1$ and $N_2$ each by $n$. Other points of $D(\LL)$ may become $\partial$-close (decreasing $N_1$ while increasing $N_2$). However, the conditions on $\ell$ guarantee no new double points are introduced, and any $\partial$-close double point is either eliminated or stays $\partial$-close. As a result, $\chi(\B^\prime) \leq \chi(\B)$. This same operation may be performed with $X^+$ and $X^-$ interchanged, so that $\ell$ is $\partial$-close in $X^-$ and not in $X^+$.

\subsubsection*{Pinching double-cusp disks as in \S \ref{subsubsection: pinching and double-cusp disks}.} 
Now we assume that $\ell$ is not $\partial$-close in either $X^+$ or $X^-$.  Similar to before, we assume $\ell$ is maximal in the sense that either $G\cap X^\pm\setminus\{v^\pm_1,v^\pm_n\}$ are both disconnected or $\ell\cong S^1$, and that no subarc of $\ell$ is maximal.

The reader should observe that if there is no such $\ell$, either every double point of $\LL$ is $\partial$-close and we can always apply the pinching procedure of \S\ref{subsubsection: pinching boundary close} to eliminate points in $D(\LL)$, or there is a single point $p\in D(\LL)$ which is not $\partial$-close in either $X^\pm$---the toric case. In either case, if $\LL$ has at least two double points, some pair must be connected by a $\partial$-close arc.

Suppose $\ell \subset \LL$ is such a maximal arc. Let $J^\pm \subset G$ be connected subgraphs such that ${J}^\pm \setminus \ell \subset {\rm int}\,{X}^\pm$ and $J^\pm \cap \partial X^\pm =\ell$. At least one of $J^\pm$ must contain an edge outside of $\ell$.

Next, using the product structure of our space, $\bR^2 \times \bR$,  we project $J^+$ (respectively, $J^-$) onto $X^-$ (respectively, $X^+$).  Let $\p_{X^\pm} (J^\mp) \cup J^\pm \subset X^\pm$ denote the resulting graphs, taking the (transverse) intersections of $\p_{X^\pm} (J^\mp) \cup J^\pm$ as additional vertices.
Observe that $$\p(\p_{X^+} (J^-) \cup J^+) = \p(\p_{X^-} (J^+) \cup J^-).$$

Now, let $E^+ \subset \p_{X^+} (J^-) \cup J^+ \subset X^+$ be a simple edge-path that starts and ends in $\ell \subset \partial X^\pm$ with ${\rm int}\,{E}^+ \subset {\rm int}\,{X}^+$. From the common projection, we can lift a dual edge-path $E^- \subset \p_{X^-} (J^+) \cup J^- \subset X^-$ such that $\p(E^+) = \p(E^-)$.  Thus, $E^+ \cup E^-$ is the boundary of a double-cusp disk, $\td$.  We then take $\delta^\pm$ to be the half-disks that $E^\pm$ split off in $X^\pm$.  Thus, we have the setup for Lemma \ref{lemma: emptying B-Sigma} and can now assume ${\rm int}\,{\td} \cap \B = \varnothing$. With this assumption in place we can performing the pinching procedure described in \S \ref{subsubsection: pinching and double-cusp disks}.

In the resulting branched surface, $\B^\prime$, we have reduced $N_1$, so $\chi(\B^\prime) < \chi(\B)$.  We caution the reader that a decrease in $N_1$ may result in an increase in $N_2$.  However, the combined procedures of \S\ref{subsubsection: pinching boundary close} and \S\ref{subsubsection: pinching and double-cusp disks} will always terminate, at which point either $|D(\LL)| = 0$---the saucer and mushroom---or $D(\LL) = 1$---the toric case.  We are then in the case where $\chi(\B)$ is minimal.
\endproof

\begin{example}[Mushroom with five double points] It is helpful to see how the argument of \S \ref{subsection: proof of |cC| = 3 theorem} can be carried out in actual examples.  In Fig.~\ref{Fig: double-points example1}, we offer an example of a branched surface that is equivalent to that of a mushroom $S^2$ embedding.  There are five double points in the branching locus. We draw the reader's attention to the green curve, the unique component of $\LL$.  We have marked a point with ``$\times$''.  Starting at this point one can produce a ``Gaussian notation'' sequence by labeling the double points as they are encountered when traversing the $\LL$.  Specifically, we have the numeric sequence $$\times, 1,2,3,3,4,2,5,5,4,1,\times.$$
We also draw the reader's attention to a single crossing in the projection of $\LL$ which does not make a contribution to this numeric sequence.

To finish out the narrative for Fig.~\ref{Fig: double-points example1}, $\partial \B$ consists of the two blue curves.  The light-blue ``train-tracks'' illustrate how $\B$ branches at $\LL$.  And the regions with the light-blue swirls correspond to portions of $\D^+$. Here $\D^-$ is not visible, as it is the underside of this depiction of $\B$. 

\begin{figure}[ht]
\centering

\begin{subfigure}[b]{.55\textwidth}

\labellist
\small
\pinlabel $1$ [c] at 150 247
\pinlabel $2$ [c] at 285 292
\pinlabel $3$ [c] at 270 173
\pinlabel $4$ [c] at 232 280
\pinlabel $5$ [c] at 415 256

\pinlabel $\times$ [c] at 132 56
\endlabellist

\centering
\includegraphics[width=\textwidth]{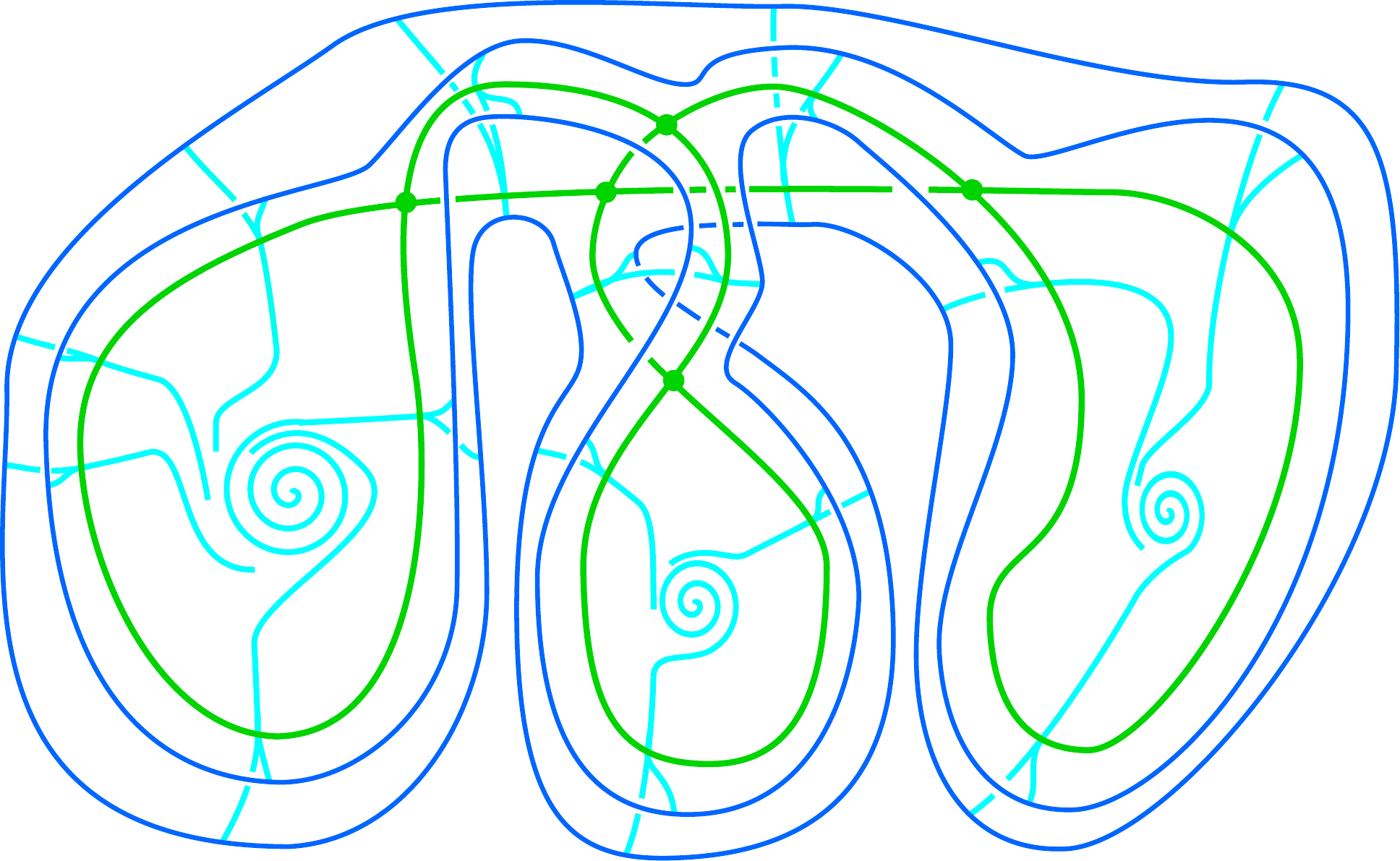}
\caption{Top-down view of $\B$.}
\label{Fig: double-points example1}
\end{subfigure}
\hspace{1em}
\begin{subfigure}[b]{.37\textwidth}

\labellist
\small

\pinlabel $1$ [c] at 150 80
\pinlabel $2$ [c] at 220 78
\pinlabel $4$ [c] at 255 32
\pinlabel $3$ [c] at 278 115
\pinlabel $3$ [c] at 323 167
\pinlabel $4$ [c] at 303 240
\pinlabel $2$ [c] at 215 263
\pinlabel $5$ [c] at 152 262

\pinlabel $4$ [c] at 55 169
\pinlabel $5$ [c] at 75 240
\pinlabel $1$ [c] at 80 100
\pinlabel $2$ [c] at 119 181

\pinlabel $\partial \td$ [c] at 232 305

\pinlabel $A^+$ [c] at 340 145
\pinlabel $A^-$ [c] at 270 145

\pinlabel $\times$ [c] at 117 80
\endlabellist

\centering
\includegraphics[width=\textwidth]{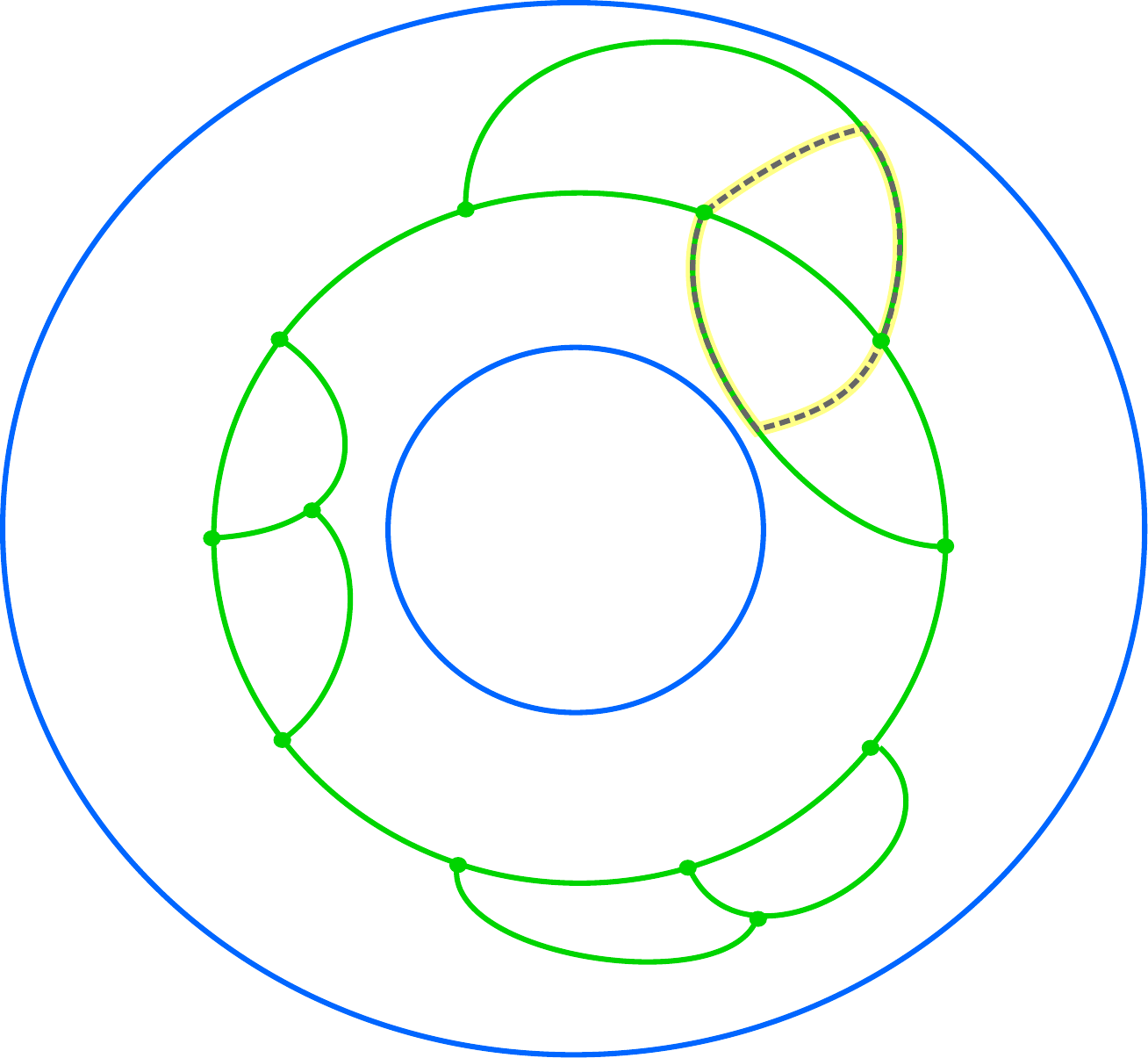}
\caption{The graph $G$ in $A^+\cup A^-$.}
\label{Fig: double-points example2}
\end{subfigure}

\caption{A mushroom-type branched surface $\B$ and its associated graph $G$. The green curve corresponds to $\LL$ and the blue curves correspond to the two components of $\partial \B$.  The three ``swirling'' regions of $\B$ lie in $\Delta^+$. The dotted loop on the right is the boundary of a double-cusp disk.}
\label{Fig: double-points example}
\end{figure}

Fig.~\ref{Fig: double-points example2} depicts $A^+$ and $A^-$, which are $X^+$ and $X^-$ in the mushroom case.  For convenience,  we adjoin them together along their common branching boundary component---the core green circle. Note that the vertices along this curve appear twice in $G$---once in the $A^+$ component and once in the $A^-$ component. In ${\rm int}\,{A}^+$ and ${\rm int}\,{A}^-$, we have the subgraphs of $G$ arising from $A^+ \cap \LL$ and $A^- \cap \LL$, respectively.

Initially, $\chi(\B) = (14, 8)$.  To see this  with respect to $A^+$, when we traverse counterclockwise the core green circle---$\partial A^+ \cap \LL$---starting at the $\times$-point, the first occurrences of $1$, $2$, $3$, $4$, and $5$  are not $\partial$-close.  Additionally, the second occurrence of $2$ and the interior vertex $4$ are not $\partial$-close.  However, the second occurrences of $3$, $5$, $4$, and $1$ are all $\partial$-close.  Thus, $A^+ \cap G$ contributes a count of $7$ to $N_1$ and $4$ to $N_2$.  

Replicating this count with respect to $A^-$, the second occurrences of $3$, $2$, $5$, $4$, and $1$ along with the first occurrence of $4$ and the interior $2$-vertex are not $\partial$-close.  But, the first occurrences of $1$, $2$, $3$, and $5$ are all $\partial$-close.  Thus, $A^- \cap G$ also contributes $7$ to $N_1$ and $4$ to $N_2$.  
 
We have a number of ways to use subgraphs to eliminate double points of $\LL$, thereby reducing $\chi(\B)$.  We refer the reader to Fig.~\ref{Fig: double-points example2}.  To perform a $\partial$-close pinching, we consider the subgraph $J^+$ that consists the vertex labeled $4$ in ${\rm int}\, A^+$, its three adjacent vertices on the green core circle, that in our numeric sequence correspond to $\times, 1,2,3$, and their adjoining edges. With respect to $A^-$, the arc through the vertices in the same partial numeric sequence, $\times,1,2,3$, is $\partial$-close.  Then we take a neighborhood $N(J^+) \subset A^+$ as the needed $\LL$-disk to perform a $\partial$-close pinching. There is a similar possible $\partial$-close pinching coming from the subgraph $J^- \subset A^-$ containing the $2$-labeled vertex in ${\rm int}\, A^-$, its three adjacent vertices in $\partial A^-$, and their adjoining edges.

Performing the $\partial$-close pinching associated with the above $J^+$ eliminates the double points $1$, $2$, and $3$, leaving two double points, $4$ and $5$.  The resulting numeric sequence for the new branched surface, $\B^\prime$, is $$ \times, 4,5,5,4,\times.$$  Specifically, the first occurrences of $4$ and $5$ are $\partial$-close with respect to $A^-$, but the second occurrences of $5$ and $4$ are not.  This pattern flips with respect to $A^+$---the first occurrences of $4$ and $5$ are not $\partial$-close with respect to $A^+$, but the second occurrences of $5$ and $4$ will be.  Thus, the new complexity measure is $\chi(\B^\prime) = (4,4)$.

Alternatively, the arc $\ell\subset \LL$ containing the subsequence $3,4,2,5$ is not $\partial$-close in either $A^+$ or $A^-$, as it cannot be entirely ``seen'' from $A^+\cap\partial\B$ or $A^-\cap\partial\B$. So one can employ pinching. To do so, we first identify the boundary of a double-cusp disk.

To obtain $J^-$, we add to $\ell\subset A^-$ the edge in ${\rm int}\,A^-$ with endpoints $4$ and $5$, and to obtain $J^+$, we take $\ell\subset A^+$ and the edge in ${\rm int}\,A^+$ with endpoints $3$ and $2$.  We then project $J^-$ into $A^+$ and $J^+$ into $A^-$.  The dotted loop $\partial \td$, shown in Fig.~\ref{Fig: double-points example2}, is a boundary of the required double cusp disk.  The ``rectangular'' region $\partial \td$ bounds corresponds to $d^+ \cup d^-$.  Performing the associated pinching will increase $|D(\LL)|$ from $5$ to $6$.

To see this move in Fig.~\ref{Fig: double-points example1}, the reader should imagine sliding the $2$-labeled double point along $\LL$ towards-and-past the double point labeled $4$.  This will create two new double points, one in the $\overline{14}$ segment of $\LL$---label it $6$---and another in the $\overline{43}$ segment---label it $7$.  In Fig.~\ref{Fig: double-points example2}, this slide corresponds to shrinking all three edges having point labels $(2,4)$.  In the resulting branched surface, we drop the use of $2$ as a double point label, yielding the numeric sequence, $$\times, 1,6,7,3,3,7,4,5,5,4,6,1,\times.$$

To focus on the complexity change, $N_1$ drops from $14$ to $12$ while $N_2$ increases from $8$ to $12$.  Thus, the complexity will be reduced since lexicographically, $(12,12) < (14,8)$.  Moreover, every subgraph of ${\rm int}\, A^\pm \cap G$ will be associated with a $\partial$-close pinching. Specifically, the reader can check that in ${\rm int}\, A^+ \cap \LL$, there will be three edges with endpoint pairs $(1,6)$, $(7,3)$, and $(4,5)$; and in ${\rm int}\, A^- \cap \LL$, there will be three edges with endpoint pairs $(3,7)$, $(5,4)$ and $(6,1)$---all outermost edges in their respective annuli.
\end{example}

\section{Possible isotopy classes of $\e(S^2)$ for $|\cC| = 5$.}
\label{section: 5 creases}

We expect such a classification to become much more difficult as the number of crease curves increases. As an illustration of the variety of behavior that might arise, we examine one of the configurations consisting of five curves. In this configuration, shown in Fig.~\ref{Fig: 5creaseconfig}, four of the curves bound disks in $S^2$, while the fifth curve has turning number $-3$.

\begin{figure}[ht]

\labellist
\small
\pinlabel $\gamma_1$ [c] at 60 110
\pinlabel $\gamma_2$ [c] at 60 20
\pinlabel $\gamma_3$ [c] at 180 110
\pinlabel $\gamma_4$ [c] at 180 20
\pinlabel $\gamma_5$ [c] at 240 80
\endlabellist   

\centering
\includegraphics[width=.3\linewidth]{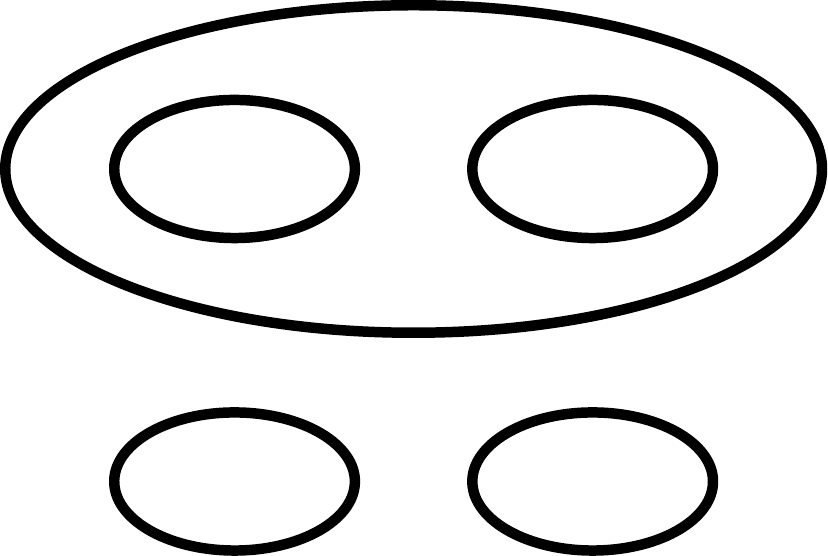}
\caption{The five-crease configuration under discussion.}
\label{Fig: 5creaseconfig}
\end{figure}

We begin by considering which crease sign assignments are realizable. Drawing a parallel to the three-curve case, we might hope each choice of sign assignment for the four disk-bounding curves determines that of the fifth, $-3$-turning number curve, and in turn a unique regular isotopy class of embedding.

To explore this, we introduce a method for constructing embeddings.

\begin{definition}
Let $\e,\e':S^2\to\bR^2\times\bR$ be normal embeddings with smooth crease sets $\cC$ and $\cC'$, and fix curves $\gamma\in \cC,\gamma'\in\cC$ such that at least one of $\gamma,\gamma'$ has positive crease sign. Let $\D,\D'\subset S^2$ be disks such that $\D\cap\cC\subset\gamma$ and $\D'\cap\cC'\subset\gamma'$. Then the {\it connect sum $\e\#_{\gamma\#\gamma'}\e'$ of $\e$ and $\e'$ along $\gamma$ and $\gamma'$} is the embedding formed by taking a smooth connect sum of $\e(S^2)$ and $\e'(S^2)$ along $\e(\D)$ and $\e'(\D')$.
\end{definition}

It is straightforward to see that, up to regular isotopy, the resulting embedding is well-defined and determined by the choice of curves $\gamma$ and $\gamma'$. The crease sign condition on $\gamma$ and $\gamma'$ is necessary for the result to be an embedding: if, say, $\gamma$ has negative folding, in order to create a smooth connect sum, the $\e'$ factor must lie inside the ball bound by $\e$. The crease signs of the resulting embedding depend on the crease signs of $\gamma$ and $\gamma$ as well, in that if, say, $\gamma$ has negative folding, $\gamma\#\gamma'$ will have negative sign, the other curves of $\cC$ will remain unchanged, but the signs of all other curves of $\cC'$ will flip.

Returning to the five-curve configuration, this connect sum operation yields examples for every choice of crease sign for the four disk-bounding curves, shown in part in Table~\ref{tab:5-curves}. The unlisted assignments may be obtained through a symmetry of one of the connect sum factors or the resulting embedding.

\begin{table}[ht]
\begin{minipage}{\textwidth}
\centering
\caption{Realizations of embeddings based on the crease signs of disk-bounding curves, enumerated in the first four columns. The first factor in each sum corresponds to $\gamma_1$ and $\gamma_2$; the second to $\gamma_3$ and $\gamma_4$. The last column indicates the crease sign of $\gamma_5$ in the given realization.}
\label{tab:5-curves}
\begin{tabular}{@{}cccccc@{}}
\toprule
\ $\gamma_1$ & $\gamma_2$ & $\gamma_3$ & $\gamma_4$ & Realization      & $\gamma_5${\ } \\ \midrule
\ $+$        & $+$        & $+$        & $+$        & $\e_{M}\#\e_{E}$ & $-${\ }       \\
\ $-$        & $-$        & $-$        & $-$        & $\e_{E}\#\e_{E}$ & $+${\ }       \\
\ $+$        & $+$        & $\pm$      & $\mp$      & $\e_{M}\#\e_{S}$ & $-${\ }       \\
\ $-$        & $-$        & $\pm$      & $\mp$      & $\e_{S}\#\e_{E}$ & $+${\ }       \\
\ $\pm$      & $\mp$      & $\pm$      & $\mp$      & $\e_{S}\#\e_{S}$ & $-${\ }       \\ \bottomrule
\end{tabular}
\end{minipage}
\end{table}

Note that this construction leaves a gap: we only give an example for one of the two possibilities for the crease sign of $\gamma$ in each case. However, $\gamma_5$'s sign is determined by those of $\gamma_1,\ldots,\gamma_4$ in rows 1-2 (since we must have at least one $+$ and one $-$ crease) and in row 4 (by the same proof as of Lemma~\ref{Lem: eliminating f_4}). We do not know whether examples with the opposite sign for $\gamma_5$ exist for rows 3 and 5.

Moreover, it seems plausible that non-regularly isotopic embeddings may have the same crease sign data, even accounting for further symmetry. Let $\bar{\e}_E$ be the mirror image of $\e_E$ through the $\bR^2$-plane. Then $\e_E\#\e_E$ and $\e_E\#\bar\e_E$ have the same crease sign data, yet do not seem to be regularly isotopic.


\section{Open questions}
\label{Section: problems}

Although there are large number of directions an investigation into the crease set could take---for example, surfaces with boundary in $\bR^2 \times \bR$---we supply only 
a few that immediately come to mind for closed surfaces.  The motivation of these problems is to advance the further development of the machinery of \S \ref{section: constructing embeddings} and \S\ref{section: classification set-up}.

\begin{problem}[Analyze the $S^2$ crease set with corners]
{\rm As previously mentioned, from \S\ref{section: constructing embeddings} onward our analysis deals with understanding the embeddings of $S^2$ into $\bR^2 \times \bR$ in the situation where the crease set is without corners.  An obvious direction of investigation is to carry out a similar analysis in the situation where corners occurred. There should be some interaction between this line of investigation and the one described in the previous sections.  In particular, referring back to {\mbox Fig.~\ref{Fig:Cheshire}} and the accompanying discussion,  one sees that a pair of ``canceling corners'' can be introduced by the creation of a dimple.  Fig.~\ref{Fig:cornercancelling} illustrates of another method of adding/canceling corners to the crease set.  The proposed problem is to not only give a constructive procedure for the geometric realization of a crease set with corners that satisfies a Gauss-Bonnet weighting criteria, but give a finite list of operations for moving between embeddings.}
\end{problem}

\begin{figure}[ht]
\centering
\includegraphics[width=.5\linewidth]{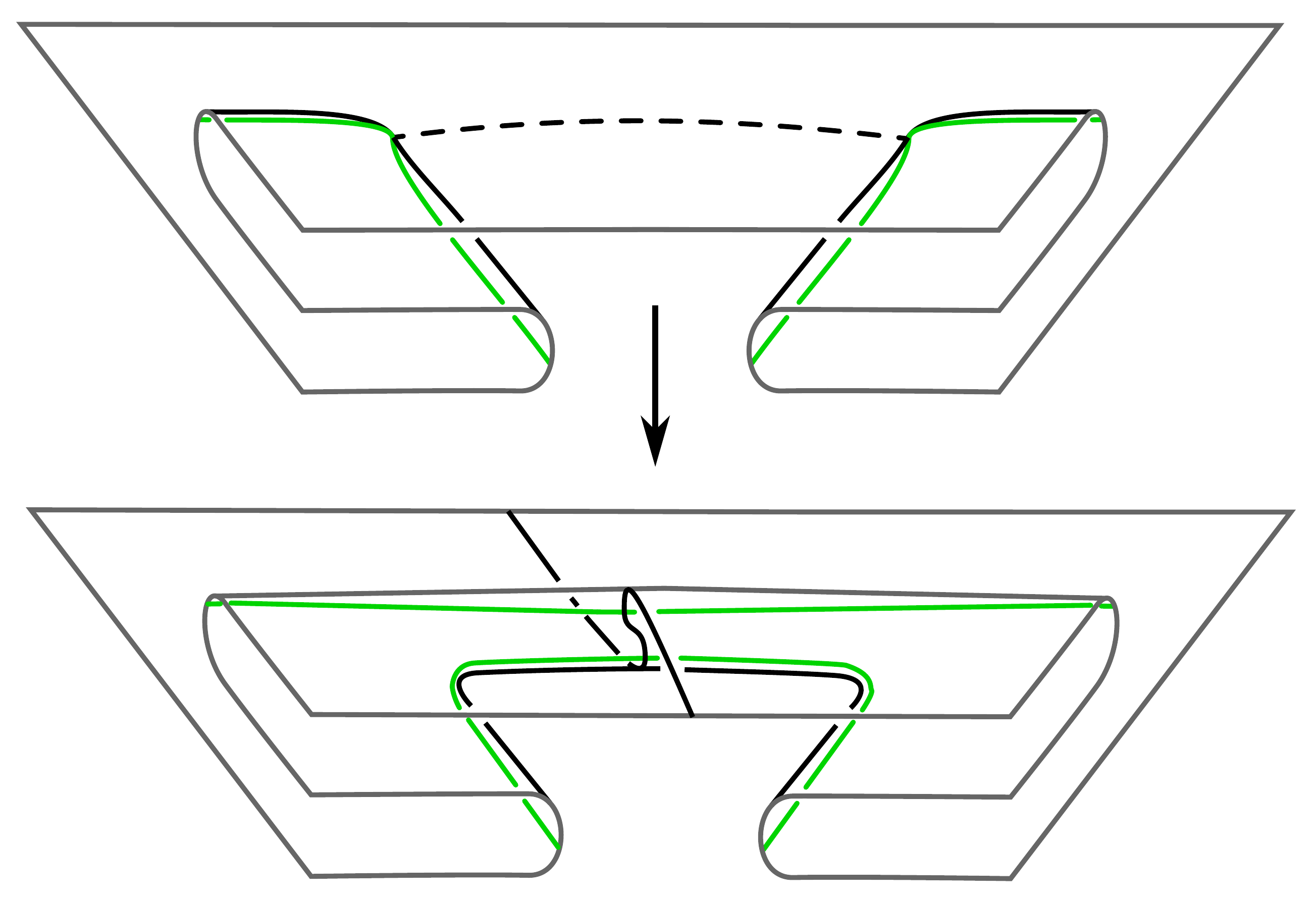}
\caption{The {\green green curves} represent $\cC$. The dashed arc between the corners in the upper diagram gives a strip to perform a corner-canceling isotopy. The arc in the lower diagram, at first flat, crosses the new crease set twice.}
\label{Fig:cornercancelling}
\end{figure}

\begin{problem}[Estimate the growth rate of embedding equivalence classes for $S^2$]
\label{problem: 2}
{\rm The construction procedure for the geometric realization of a crease configuration given in \S\ref{section: constructing embeddings} clearly involves making a lot of choices. In \S\ref{section: when |cC|=3}, we see there are exactly three non-isotopic embeddings with the same crease configuration of three curves; in \S\ref{section: 5 creases} we produce five with a common crease configuration. It is reasonable to suspect this is the case in general, and that a given crease configuration will have numerous geometric realizations.  The proposed problem is to calculate the grow rate of set of equivalency classes of embeddings for $S^2$ or any other closed orientable surface into $\bR^2 \times \bR$, as measured in comparison to the number of crease curves and corners. }
\end{problem}

\begin{problem}[Analyze the crease set for embeddings of $S^1 \times S^1\into\bR^2 \times \bR$]
{\rm We restrict our analysis here to embeddings of $S^2\into \bR^2\times\bR$. It is natural to consider which phenomena persist or collapse in positive genus. Moving just one step along this path, an investigation of the crease set for embeddings $\e : S^1 \times S^1 \into \bR^2 \times \bR$ might proceed as follows.  Given a configuration pair, $(S^1 \times S^1, \cC)$, whose closed components of $S^1 \times S^1 \setminus \cC$ satisfy Equ.~\ref{equation: GB for barK}, any component of $\cC$ is a s.c.c.\ of one of three types:  (1) homologically trivial in $S^1 \times S^1$; (2) a curve that is the boundary of an embedded disk contained in the solid torus that $\e(S^1 \times S^1)$ bounds; or, (3) a longitudinal curve on $\e(S^1 \times S^1)$.  If $\e(S^1 \times S^1)$ is unknotted then this last type of curve {\em may} also bound an embedded disk.  If type-(2) occurs then type-(3) cannot occur and vice-versa.  With this in mind, the proposed problem is to give a construction of the embedding for geometrically realizing the crease set $\cC \subset S^1 \times S^1$.}
\end{problem}

\begin{problem}[Analyze the crease set for embeddings of $S^2 \into \bR^2 \times \bR^2$]
{\rm It is very tempting to push an analysis of the crease set into dimension $4$.  Since $\bR^4$ has a natural product structure of $\bR^2 \times \bR^2$, one would in fact have two crease configurations to work with---one for each factor projection map.  The obvious first interesting examples one might consider are that of knotted $2$-spheres.}

\end{problem}




\end{document}